\setlist[enumerate, 1]{label={\textup{(\roman*)}}}
\setlist[enumerate, 2]{label={\textup{(\alph*)}}}
\setlist{topsep=\smallskipamount, leftmargin=1.5em, listparindent=1em}
\theoremstyle{plain}
\newtheorem{theorem}{Theorem}[section]
\newtheorem{lemma}[theorem]{Lemma}
\newtheorem{prop}[theorem]{Proposition}
\newtheorem*{citedtheorem}{Theorem}
\theoremstyle{definition}
\newtheorem{definition}[theorem]{Definition}
\newtheorem{example}[theorem]{Example}
\theoremstyle{remark}
\newtheorem{remark}[theorem]{Remark}
\numberwithin{equation}{section}
\newcommand\uv[1]{``#1''}
\let\zet\Z
\let\kve\Q
\let\ce\C
\renewcommand{\P}{\mathbb{P}}\let\pe\P
\let\ef\F
\newcommand\OO{\mathcal{O}}
\let\epsilon\varepsilon
\newcommand\zav[1]{\left(#1\right)}
\newcommand\set[1]{\left\{#1\right\}}
\newcommand\abs[1]{\left|#1\right|}
\newcommand\gener[1]{\left\langle#1\right\rangle}
\DeclareMathOperator{\codim}{codim}
\DeclareMathOperator{\Sym}{Sym}
\newcommand\lows{\underline s}
\newcommand\ups{\overline s}\newcommand\upr{\overline r}
\newcommand\floor[1]{\left\lfloor#1\right\rfloor}
\newcommand\ceil[1]{\left\lceil#1\right\rceil}
\DeclareMathOperator{\rank}{rank}
\newcommand\restr[2]{\left.#1\right|_{#2}}
\DeclareMathOperator{\vdim}{vdim}
\newcommand\twovenn[5]{%
    \begin{tikzpicture}[scale=.975, line width=.6pt]
        \draw (-2.25,-1.5) rectangle (2.25,1.5);
        \path (-2.25,1.5) node[above right]{$#1$};
        \draw (-.6,.1) circle (1.1cm);
        \draw (.6,.1) circle (1.1cm);
        \path
            (2.2,-1.45) node[above left]{$#2$}
            (-1.075,.1) node{\hbox to2em{\hfil$#3$\hfil}}
            (1.075,.1) node{\hbox to2em{\hfil$#4$\hfil}}
            (0,.1) node{$#5$}
        ;
    \end{tikzpicture}
}
\newcommand\threevenn[2]{%
    \begin{tikzpicture}[scale=.975]
        #2
        \draw (-2.25,-2.35) rectangle (2.25,1.5);
        \path (-2.25,1.5) node[above right]{$#1$};
        \draw (-.6,.1) circle (1.1cm);
        \draw (.6,.1) circle (1.1cm);
        \draw[dashed] (-.6,.1)++(300:1.2cm) circle(1.1cm);
    \end{tikzpicture}
}
\newcommand\tmpangle{56.944268849146}
\newcommand\tmpcolor{red!25!white}
\newcommand\arxiv[1]{\href{https://arxiv.org/abs/#1}{arXiv:#1}}
\newcommand\tinylabel[1]{\hbox spread-6pt{\tiny#1}}
\newbox\btwobox\setbox\btwobox=\hbox{$B_2$}
\newcommand\familynameCommuting[1]{\hbox to\wd\btwobox{\hss$#1\vphantom{B_2}$\hss}}
\newdimen\containmentwd\containmentwd=.4\textwidth
\newcommand\familyname[2]{\hyperlink{def:#1#2}{$#1_#2$}}
\newcommand\famitem[2]{\item[{($#1_#2$)}]\hypertarget{def:#1#2}}
\newcommand\hatname[2]{\hyperlink{def:#1#2hat}{$\hat #1_#2$}}
\newcommand\hatitem[2]{\item[{($\hat #1_#2$)}]\hypertarget{def:#1#2hat}}
\newcommand\giturl{\url{\gitadress}}
\begin{document}

\title[Secants of Segre-Veronese $\mathbb P^{m}\times\mathbb P^{n}$ with $\mathcal O(1,2)$ are non-defective for $n\gg m^3$, $m\geq3$]{Secant varieties of Segre-Veronese varieties $\mathbb P^m\times\mathbb P^n$ embedded by $\mathcal O(1,2)$ are non-defective for $n\gg m^3$, $m\geq3$}

\author{Matěj Doležálek}
\address{Fachbereich Mathematik und Statistik, Universität Konstanz, Konstanz, Germany}
\email{matej.dolezalek@uni-konstanz.de}
\author{Nikhil Ken}
\address{Dipartmento di Matematica e Informatica ``Ulisse Dini'',  Università di Firenze, Firenze, Italy}
\email{nikhil.ken@unifi.it}

\subjclass[2020]{Primary 14N07;
    Secondary 14N20, 14Q15, 15A69}
\keywords{Secant varieties, Segre-Veronese varieties, non-defectivity}

\thanks{This work has been supported by European Union’s HORIZON–MSCA-2023-DN-JD programme under the Horizon Europe (HORIZON) Marie Skłodowska-Curie Actions, grant agreement 101120296 (TENORS)}

\begin{abstract}
We prove that for any $m\geq3$, $n\gg m^3$, all secant varieties of the Segre-Veronese variety $\mathbb P^m\times\mathbb P^n$ have the expected dimension. This was already proved by Abo and Brambilla in the subabundant case, hence we focus on the superabundant case. We generalize an approach due to Brambilla and Ottaviani into a construction we call the \emph{inductant}. With a combinatorial investigation of these constructions, the proof of non-defectivity reduces to checking a finite collection of base cases, which we verify using a computer-assisted proof.
\end{abstract}

\maketitle

\tableofcontents

\section{Introduction}

The study of secant varieties is a classical problem of algebraic geometry. When restricted to varieties such as the Segre or Veronese varieties, it also presents a geometric formulation of questions concerning border rank or symmetric border rank in tensor spaces. The famous theorem of Alexander and Hirschowitz \cite{alexander-hirschowitz95, alexander-hirschowitz97} solved completely the problem of determining the dimensions of secant varieties of Veronese varieties. It says that this dimension is always the so-called \emph{expected dimension}, except for an explicitly enumerated and relatively small set of defective cases. Brambilla and Ottaviani \cite{brambilla-ottaviani} later gave an alternative presentation, some of whose techniques motivate this article.

For Segre varieties or for Segre-Veronese varieties, complete solutions in the spirit of the Alexander-Hirschowitz theorem are currently not known, although many partial results are available. Strassen \cite{strassen69, strassen83} originally studied the problem of tensor rank (and border rank) in connection with complexity of matrix multiplication. Lickteig \cite{lickteig} provided further results on three-factor Segre varieties, notably in the case of $\pe^n\times\pe^n\times\pe^n$.
Abo, Ottaviani and Peterson \cite{abo-ottaviani-peterson} studied $(\pe^n)^k$ in greater generality using a general inductive technique; they also classified defective $s$-th secant varieties of Segre varieties for small $s$. Blomenhofer and Casarotti \cite{blomenhofer-casarotti} gave a unified treatment for Segre-Veronese varieties, Grassmannians and other varieties and proved non-defectivity of their $s$-th secant varieties for $s$ outside of a certain small interval.

In the case of Segre-Veronese varieties, the problem of defective secant varieties has also received much attention in recent years, especially in the two-factor case. Bernardi, Carlini and Catalisano \cite{bernardi-carlini-catalisano} studied $s$-th secant varieties of $\pe^n\times\pe^n$ embedded by $\OO(1,d)$, $d\geq3$ and proved non-defectivity for $s$ outside of an interval bounded by multiples of $n+1$ nearest to $\frac{(n+1)\binom{m+d}d}{m+n+1}$. In \cite{ballico-bernardi-catalisano}, Ballico, Bernardi and Catalisano completely characterized dimensions of secant varieties to $\pe^n\times\pe^1$ embedded by any $\OO(a,b)$.

In general, the trend with Segre-Veronese varieties seems to be that factors of high degree make the problem easier.
Most notably in this vein, Abo, Brambilla, Galuppi and Oneto \cite{abo-brambilla-galuppi-oneto} proved recently that Segre-Veronese varieties $\pe^{n_1}\times\cdots\times\pe^{n_k}$ embedded by $\OO(d_1,\dots,d_k)$ with $k\geq3$ and all $d_i\geq 3$ never have defective secant varieties. This result built on earlier works by Galuppi and Oneto \cite{galuppi-oneto} and by Ballico \cite{ballico}. Ballico \cite{ballico2024} proved further improvements on the results of Abo, Brambilla, Galuppi and Oneto which allow for some of the factors to be of degree $d_i=2$. All of these results rely on first proving non-defectivity in some concrete multidegrees -- such as $\OO(3,3)$, $\OO(3,4)$ and $\OO(4,4)$ in \cite{galuppi-oneto} --  and subsequently performing inductions on degrees and on the number of factors.

These inductive techniques leave the cases with small degrees as seemingly the most interesting. The most notable of these are $\OO(1,1,1)$ and $\OO(1,2)$, the latter of which is the focus of this article. In this domain, Cartwright, Erman and Oeding \cite{cartwright-erman-oeding} provided equations defining $s$-th secant varieties of $\pe^2\times\pe^n$ embedded by $\OO(1,2)$ for small $s$. The case of $\pe^2\times\pe^n$ is also notable because it contains an infinite family of examples of defective secant varieties \cite[Theorem 5.3]{abo-brambilla}.
Abo and Brambilla worked with a general $\pe^m\times\pe^n$, proving the following:

\begin{citedtheorem}[{\cite[Theorem 3.13]{abo-brambilla}}]
Let $m$, $n$ be positive integers with $n\geq m-2$. Then for any $s\leq \lows(m,n) := \floor{\frac{(m+1)(n-m+2)}{2}}$, the Segre-Veronese variety $\pe^m\times\pe^n$ embedded by $\OO(1,2)$ has a non-defective $s$-th secant variety.
\end{citedtheorem}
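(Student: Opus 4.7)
The strategy is via Terracini's lemma, which reduces the non-defectivity of the $s$-th secant of the Segre-Veronese variety $\pe^m\times\pe^n$ embedded by $\O(1,2)$ to a postulation statement: the union $2Z_s$ of $s$ general double points should impose independent conditions on the linear system $|\O(1,2)|$. Since the bound $s\leq \floor{(m+1)(n-m+2)/2}$ places us comfortably within the subabundant regime, this is equivalent to
$$h^0\bigl(\pe^m\times\pe^n,\,\mathcal I_{2Z_s}(1,2)\bigr) = (m+1)\tbinom{n+2}{2} - s(m+n+1),$$
and the lower bound being automatic, only the upper bound must be argued.

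The plan is to induct on $n$ using the Horace method with respect to the hyperplane $H=\pe^m\times\pe^{n-1}$ cut out by a section of $\O(0,1)$. After specializing $a$ of the $s$ double points onto $H$, the Castelnuovo restriction sequence
$$0\to \mathcal I_{\mathrm{Res}_H(2Z_s)}(1,1)\to \mathcal I_{2Z_s}(1,2)\to \mathcal I_{\mathrm{Tr}_H(2Z_s)|_H}(1,2)\to 0$$
splits the problem into a trace problem on $H$, namely $2Z_a\subset H$ imposing conditions on $\O_H(1,2)$, and a residue problem on $\pe^m\times\pe^n$, namely $a$ simple points together with $s-a$ double points imposing conditions on $\O(1,1)$. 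By upper semicontinuity of $h^0$, it suffices to find an integer $a$ in the admissible range so that both halves have the expected postulation: the trace by the induction hypothesis at $(m,n-1)$, and the residue either by a parallel Horace induction or directly from the well-understood rank stratification of matrices underlying the secant geometry of the Segre embedding by $\O(1,1)$.

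The delicate point is the arithmetic of choosing $a$: when $(m+1)(n-m+2)$ or the corresponding expected count on the $\O(1,1)$ side is odd, the naive candidate for $a$ fails to be integral, and one must appeal to the \emph{differential Horace} refinement of Alexander-Hirschowitz, specializing only a $1$-jet of a double point to $H$ rather than the full scheme, so that a tangent direction contributes to the trace while the transverse direction contributes to the residue. Managing the book-keeping of the mixed simple/double points in the residue induction, and checking the finitely many base cases with $n$ close to $m-2$ (where the inductive recursion bottoms out and the bound $(m+1)(n-m+2)/2$ is small), are where I expect the argument to be most technical; the remainder is a standard semicontinuity recursion.
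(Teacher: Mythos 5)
This is a \emph{cited theorem}: the paper explicitly defers to Abo and Brambilla for its proof and treats only the complementary superabundant regime, so there is no internal proof to compare against. Judged on its own, your sketch runs into the precise obstruction the paper flags at the start of Section~\ref{sec:coord-configs}: hyperplane specialization ``does not work well for bidegree $(1,2)$ \dots\ because of the many defectivities in bidegrees $(1,1)$ and $(0,2)$.''

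Concretely, your Horace step specializes points to the divisor $H\simeq\pe^m\times\pe^{n-1}$ of class $\O(0,1)$, landing the residue in $\O(1,1)$. But the Segre $\pe^m\times\pe^n$ embedded by $\O(1,1)$ is defective for every secant degree $s'\geq2$: the $s'$-th secant variety is the determinantal variety of matrices of rank $\leq s'$, of affine dimension $s'(m+n+2-s')$ rather than the expected $s'(m+n+1)$, a defect of $s'(s'-1)$. So a residual system containing $s-a\geq2$ full double points on $\O(1,1)$ can never impose the expected number of conditions, no matter how many accompanying simple points you carry, and this is a structural failure of the residue system rather than an integrality mismatch, so the differential Horace refinement does not repair it. You are therefore forced to leave at most one double point per residue step ($a\geq s-1$), yet a single hyperplane step from $n-1$ to $n$ must absorb the roughly $(m+1)/2$ extra double points that the target bound $\floor{(m+1)(n-m+2)/2}$ gains, which is impossible once $m$ grows. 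The fix used by Abo--Brambilla, and systematized in the present paper's inductant formalism (see e.g.\ the family $A_1$, which specializes to $y_0=y_1=0$), is to specialize to a \emph{codimension-two} coordinate subvariety $\pe^m\times\pe^{n-2}$, using a restriction sequence that keeps both the trace and the residue in bidegree $(1,2)$ and thereby avoids $\O(1,1)$ and $\O(0,2)$ entirely. This higher-codimension specialization is the key idea missing from your proposal.
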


For $s < \frac{(m+1)\binom{n+2}2}{n+m+1}$, the secant variety cannot fill the ambient space $\pe^{(m+1)\binom{n+2}2-1}$ and hence the cases $s\leq \floor{\frac{(m+1)\binom{n+2}2}{n+m+1}}$ are called \emph{subabundant}. Since the bound $\floor{\frac{(m+1)(n-m+2)}{2}}$ in the result of Abo and Brambilla equals $\floor{\frac{(m+1)\binom{n+2}2}{n+m+1}}$ for $n$ large enough with respect to $m$, this settles all subabundant cases for sufficiently large $n$.
Abo \cite{abo} also investigated $\pe^n\times\pe^n$ and $\pe^{n+1}\times\pe^n$ with $\OO(1,2)$, proving that these are non-defective except for $\pe^4\times\pe^3$.

Our main result in this paper is the following:
\begin{theorem}
    \label{thrm:main}
    If $m\geq3$ and
    \[
        n\geq\begin{cases}
            m^3-m-3, & \text{if $m$ is even and $n$ is odd},\\
            \ceil{\frac{m^3-2m-5}2}, & \text{otherwise},
        \end{cases}
    \]
    then all secant varieties of the Segre-Veronese variety $\pe^{m}\times\pe^{n}$ embedded by $\OO(1,2)$ are non-defective.
\end{theorem}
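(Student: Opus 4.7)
The natural split is subabundant versus superabundant. For $s$ in the subabundant range, the cited theorem of Abo and Brambilla already gives non-defectivity, so the task is to show that at the superabundant threshold $s_0 = \ceil{(m+1)\binom{n+2}{2}/(n+m+1)}$ one has $\sigma_{s_0}(X) = \pe^N$, where $X = \pe^m\times\pe^n$ is the Segre-Veronese variety and $N = (m+1)\binom{n+2}{2} - 1$. In the range of $n$ allowed by the hypothesis, the Abo-Brambilla bound $\floor{(m+1)(n-m+2)/2}$ lies just below $s_0$, so exactly one value of $s$ remains to be handled; once that case is settled, every larger $s$ follows trivially.

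By Terracini's lemma, the remaining statement is equivalent to the assertion that the space of bidegree $(1,2)$ forms on $\pe^m\times\pe^n$ vanishing to order $2$ at $s_0$ generic points has the expected dimension $\max\bigl(0,\,(m+1)\binom{n+2}{2} - s_0(n+m+1)\bigr)$. The first step would be to recast the problem in this linear-system form and to track carefully the numerical quantities, which is where the parity and residue distinction in the hypothesis on $n$ enters.

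The core of the argument is the \emph{inductant}, a generalization of the Brambilla-Ottaviani degeneration to the bihomogeneous setting. The plan is to specialize a carefully chosen subfamily of the double points onto a divisor $\pe^m\times\pe^{n-1}\subset\pe^m\times\pe^n$ (a hyperplane in the second factor) and invoke the standard trace-residue exact sequence
\begin{equation*}
0 \longrightarrow \mathcal I_{\mathrm{Res}}(1,1) \longrightarrow \mathcal I_{Z}(1,2) \longrightarrow \mathcal I_{\mathrm{Tr}}(1,2)\big|_{\pe^m\times\pe^{n-1}} \longrightarrow 0.
\end{equation*}
The residual system has bidegree $(1,1)$ on $\pe^m\times\pe^n$, hence pertains to a Segre variety whose secants are well understood; the trace system has bidegree $(1,2)$ on $\pe^m\times\pe^{n-1}$, of the same type as the original but with smaller $n$, and thus is accessible by the inductive hypothesis. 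The decisive numerical step is to choose the number of specialized points so that the expected-dimension counts on both subsystems are simultaneously exact; this divisibility is the source of the residue and parity conditions appearing in the theorem.

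With such an inductant in hand, the recursion propagates from large $n$ down to a bounded window, leaving a finite list of base cases — small $m \geq 3$ together with $n$ below the cubic threshold of the statement. These finitely many residual cases would be verified by a computer-assisted check: evaluating the rank of the tangent-space Jacobian at random points over a large finite field, using the code of the accompanying repository. The main difficulty I anticipate is the design of the inductant itself: ensuring that the trace and residual subsystems simultaneously satisfy the inductive hypothesis with no dimension lost, that the ancillary $(1,1)$-Segre systems which appear are non-defective in precisely the configurations required, and that the residual set of base cases is small enough to be tractable by the finite-field computation. The cubic growth $n \gtrsim m^3$ should emerge as the smallest $n$ for which the numerical balance of the inductant is achievable.
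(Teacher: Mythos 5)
Your high-level plan (subabundant handled by Abo--Brambilla, reduce to showing $\sigma_{s_0}(X)$ fills, Terracini, an inductant generalizing Brambilla--Ottaviani, finite base cases by rank computation over a finite field) matches the paper's framework, and the observation that only one new value of $s$ needs to be treated for $n$ large enough is correct. However, the concrete mechanism you describe for the inductant is the wrong one, and it is precisely the mechanism the paper explains would \emph{not} work.

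You propose specializing double points to the hyperplane divisor $\pe^m\times\pe^{n-1}$ and invoking the trace--residue sequence
\[
0 \to \mathcal I_{\mathrm{Res}}(1,1) \to \mathcal I_{Z}(1,2) \to \mathcal I_{\mathrm{Tr}}(1,2)\big|_{\pe^m\times\pe^{n-1}} \to 0,
\]
arguing that the residual $(1,1)$ system pertains to a Segre variety whose secants are ``well understood.'' They are well understood, but they are also highly defective: secant varieties of $\pe^m\times\pe^n$ embedded by $\O(1,1)$ are just determinantal varieties of bounded-rank matrices, which have strictly smaller dimension than expected for all $s\geq 2$. So the residual system will never have the expected dimension, and you cannot conclude anything from this sequence. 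The paper says exactly this in Section~\ref{sec:coord-configs}: specializing to hyperplanes ``does not work well for bidegree $(1,2)$ \dots because of the many defectivities in bidegrees $(1,1)$ and $(0,2)$,'' and this is what motivates going to higher-codimensional specializations in the first place. The actual inductant never leaves bidegree $(1,2)$: it specializes points to coordinate subvarieties $\pe^{m'}\times\pe^{n'}\subset\pe^{m,n}$ of codimension at least two (e.g.\ $(m',n')=(m,n-2)$ or $(m-2,\,n-Q(m))$ with $Q(m)=3m^2-6m+2$), restricts the $(1,2)$ linear system to that subvariety, and tracks dimensions via the exact sequence $0 \to \mathcal I_{B(m,n)}(1,2) \to \mathcal I_{A(m,n)}(1,2) \to \mathcal I_{A(M,N)}(1,2)$, where the kernel is the ideal of a configuration \emph{with one more coordinate subvariety} (still in $\O(1,2)$), not a lower-bidegree residual system. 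The cascade then iterates this, producing a diagram of families ($B_0,C_0,\dots,G_0$ in the nice case, $\hat A_1,\dots,\hat D_1$ in the ugly case), and the recursion bottoms out in configurations where the union of subvarieties already forces $\mathcal I(1,2)=0$ without any points. To repair your proposal you would need to replace the hyperplane/trace-residue step with this codimension-$\geq 2$ coordinate-subvariety specialization and the accompanying bookkeeping of virtual dimensions (Lemma~\ref{lem:defect-additivity}) that makes the induction close.
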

We will prove this through Theorems~\ref{thrm:A0nice} and \ref{thrm:A0ugly}.
Our focus will be on the so-called \emph{superabundant} case, when $s\geq \ceil{\frac{(m+1)\binom{n+2}2}{n+m+1}}$,
in order to complement the subabundant result due to Abo and Brambilla. In particular, we will prove that under the bounds of Theorem~\ref{thrm:main}, the $s$-th secant variety for $s=\ups(m,n):=\lows(m,n)+1$ is superabundant and non-defective. This will then immediately imply the same also for $s\geq \ups(m,n)$, hence obtaining Theorem~\ref{thrm:main} when combined with the result of Abo and Brambilla.

Previously, there remained even for large $n$ an interval of values of $s$ for which the non-defectivity of the secant variety was not known. We are able to close this gap (for large $n$) by systematically handling the combinatorics of configurations that arise from specializing points to subspaces.

Note that cases $m=1$ and $m=2$ have already been completely solved (see \cite[Corollary 1.4]{abo-brambilla}), and leaving them out allows us to sidestep the defective cases with $m=2$ and $n$ odd.
Let us also note that our result is consistent with the list of defective cases which was conjectured to be exhaustive in \cite[Section 5]{abo-brambilla}.

To close out this introduction, we outline the structure of the paper. In Section~\ref{sec:preliminaries} we overview the basic setting of secant varieties and prepare some arithmetic related to secant varieties of Segre-Veronese varieties embedded by $\OO(1,2)$. In Section~\ref{sec:coord-configs}, we introduce the notions of \emph{coordinate configurations} and the \emph{inductant} construction. In Section~\ref{sec:strategies}, we apply these to prove Theorem~\ref{thrm:main}. Finally, in Section~\ref{sec:computations}, we describe the computations which were needed to supply the base cases of inductive proofs in Section~\ref{sec:strategies}; the source code and resulting certificates for these computations are available on GitHub at \cite{gitrepo}.

\section*{Acknowledgments}

This project started during the first author's secondment at the University of Florence.

We thank Giorgio Ottaviani for his careful reading and numerous comments on several drafts of this paper and Alessandra Bernardi, Maria Chiara Brambilla, Francesco Galuppi, Mateusz Michałek, Alessandro Oneto and Nick Vannieuwenhoven for helpful suggestions. Lastly, we thank Alberto Mancini for his help with running our computations.

\section{Preliminaries}
\label{sec:preliminaries}

\subsection{Segre-Veronese varieties and their secant varieties}

Throughout this paper, let $U$, $V$ be $(m+1)$- and $(n+1)$-dimensional $\ce$-vector spaces respectively and let $\pe^{m,n}=\pe(U)\times\pe(V)$. Let us fix coordinate systems $x_0,\dots,x_m$ on $U$ and $y_0,\dots,y_n$ on $V$. Further, let $X_{m,n}$ denote the Segre-Veronese variety $\pe^{m,n}$ embedded by $\OO(1,2)$ in $\pe^{(m+1)\binom{n+2}2-1}$.

The variety $\pe^{m,n}=\pe(U)\times\pe(V)$ is naturally equipped with the bigraded algebra $\Sym U\otimes\Sym V$, which we identify with the polynomial algebra $R=\ce[x_0,\dots,x_m,y_0,\dots,y_n]$. For any set $L\subseteq\pe^{m,n}$, we may consider its bihomogeneous ideal $\mathcal I_L$, which is generated by all bihomogeneous polynomials that vanish at all points $P\in L$. For any bihomogeneous ideal $\mathcal I\leq R$, we denote its bidegree $(a,b)$ homogeneous part by $\mathcal I(a,b)$. A~homogeneous $f\in R$ is said to be \emph{singular} at some $P\in \pe^{m,n}$ if all its partial derivatives vanish when evaluated at $P$.

In general, with a variety $X\subseteq\pe^N$, its $s$-th secant variety $\sigma_s(X)$ is defined as the Zariski closure of the union of all linear spans of $s$-tuples of points on $X$, that is
\[
    \sigma_s(X) := \overline{\bigcup_{P_1,\dots,P_s\in X}\langle P_1,\dots,P_s\rangle}.
\]
The expected dimension of $\sigma_s(X)$ is then $\min\{N, s (\dim X+1)-1\}$, we say $\sigma_s(X)$ is \emph{defective}, if it does not have the expected dimension, and \emph{non-defective} otherwise. In particular, we say $X$ itself is \emph{defective}, if $\sigma_s(X)$ is defective for some $s$, and \emph{non-defective}, if all $\sigma_s(X)$ are non-defective. Further, $\sigma_s(X)$ is said to be \emph{subabundant} (resp. \emph{superabundant}), if $s (\dim X+1)-1\leq N$ (resp. $s (\dim X+1)-1\geq N$). It is further said to be \emph{equiabundant}, if it is both sub- and superabundant.

One may investigate $\dim \sigma_s(X)$ by taking the tangent space at a generic point. By Terracini's lemma, the tangent space to $\sigma_s(X)$ at a generic point $Q$ in the span of $s$ generic points $P_1,\dots,P_s\in X$ equals the span of tangent spaces to $X$ at $P_1,\dots,P_s$. Returning to the Segre-Veronese variety $X = X_{m,n}$, we may use this view to reformulate the problem of finding $\dim\sigma_s(X_{m,n})$ as follows:
If $P_1,\dots,P_s$ are chosen generically on $\pe^{m,n}$ and $\mathcal I\leq \Sym U\otimes\Sym V = \ce[x_0,\dots,x_m,y_0,\dots,y_n]$ is the bihomogeneous ideal consisting of polynomials that are singular at each $P_i$, then the affine dimension of $\sigma_s(X_{m,n})$ equals
\[
    \codim\mathcal I(1,2) = (m+1)\binom{n+2}2 - \dim\mathcal I(1,2),
\]
where we consider the dimension of $\mathcal I(1,2)$ as a vector space over $\ce$.
Thus we may formulate our inquiries in terms of how many independent conditions does being singular at $s$ generic points of $\pe^{m,n}$ put on bidegree $(1,2)$ polynomials -- non-defectivity means $\dim\mathcal I(1,2) = \max\left\{0,\ (m+1)\binom{n+2}2 - s(m+n+1)\right\}$ in this language.

Abo and Brambilla \cite[Corollary 3.14]{abo-brambilla} proved that $\sigma_s(X_{m,n})$ is non-defective for any $s\leq \lows(m,n) := \floor{\frac{(m+1)(n-m+2)}2}$. Since this $\lows(m,n)$ equals $\floor{\frac{(m+1)\binom{n+2}2}{n+m+1}}$ for
\[
    n > \begin{cases}
        m^3-2m, & \text{if $m$ is even and $n$ is odd,}\\
        \frac{(m-2)(m+1)^2}2, & \text{otherwise,}
    \end{cases}
\]
this means that the subabundant secant varieties of $X_{m,n}$ are known to be non-defective under these conditions. For superabundant cases, Abo and Brambilla proved non-defectivity for $s$ satisfying asymptotically tight bounds, which are however strictly higher than $\ceil{\frac{(m+1)\binom{n+2}2}{n+m+1}}$ even for large $n$. Thus in this paper, our goal is to improve these non-defectivity results in the superabundant case. This will obtain non-defectivity of $X_{m,n}$ for $n\gg m^3$ when combined with the results on subabundant cases due to Abo and Brambilla.

\subsection{Arithmetic considerations}

Let
\begin{align*}
    \ups(m,n) &:= \floor{\frac{(m+1)(n-m+2)}2}+1, & \upr(m,n) &:= \ceil{\frac{(m+1)\binom{n+2}2}{n+m+1}}.
\end{align*}
We will observe in Lemma~\ref{lem:arithmetic} that these two functions agree once $n$ is sufficiently large with respect to $m$. By definition, $s=\upr(m,n)$ is always the smallest value of $s$ for which the $s$-th secant variety of $X_{m,n}$ is superabundant. The point of considering $\ups(m,n)$ is that we express $\upr(m,n)$ as a case-wise polynomial function, although this expression holds only for large values of $n$.

Note that in $\ups(m,n)$, we may split cases based on parity of $m$ and $n$ to arrive at the following case-wise definition:
\[
    \ups(m,n) = \begin{cases}
        \frac{(m+1)(n-m+2)+1}2, & \text{if $m$ is even and $n$ is odd,}\\
        \frac{(m+1)(n-m+2)+2}2, & \text{otherwise.}
    \end{cases}
\]
Due to this arithmetic behavior, a splitting of cases into \uv{$m$ even, $n$ odd} and \uv{the rest} will be common throughout this paper, so we will use the following shorthand:
\begin{definition}
    Let us say a pair of non-negative integers $(m,n)$ is
    \begin{itemize}
        \item \emph{ugly}, if $m$ is even and $n$ is odd,
        \item \emph{nice}, otherwise.
    \end{itemize}
\end{definition}

\begin{lemma}
    \label{lem:arithmetic}
    If
    \begin{equation}
        \label{eq:arithmetic-n-bound}
        n \geq \begin{cases}
            3\binom{m+1}3-m-1, & \text{if $(m,n)$ is nice,}\\
            6\binom{m+1}3-m-1, & \text{if $(m,n)$ is ugly,}
        \end{cases}
    \end{equation}
    then $\upr(m,n)=\ups(m,n)$ and
    \begin{equation}
        \label{eq:arithmetic-remainder}
        (m+1)\binom{n+2}2 - (n+m+1)\upr(m,n) = \begin{cases}
            3\binom{m+1}3-n-m-1, & \text{if $(m,n)$ is nice,}\\
            3\binom{m+1}3 - \frac{n+m+1}2, & \text{if $(m,n)$ is ugly.}
        \end{cases}
    \end{equation}
\end{lemma}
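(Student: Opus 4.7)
The plan is to carry out Euclidean division of $(m+1)\binom{n+2}{2}$ by $(n+m+1)$ and read off the quotient (hence $\upr$) and the remainder. The backbone is the polynomial identity
\[
(n+2)(n+1) = (n+m+1)(n-m+2) + m(m-1),
\]
which I would verify by direct expansion in $n$. Multiplying by $(m+1)/2$ and using $\tfrac{(m+1)m(m-1)}{2}=3\binom{m+1}{3}$ gives the master equation
\[
(m+1)\binom{n+2}{2} = \frac{(m+1)(n-m+2)}{2}(n+m+1) + 3\binom{m+1}{3},
\]
in which the entire arithmetic content of the lemma is encoded.

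Next I would split along the nice/ugly dichotomy. In the nice case $(m+1)(n-m+2)$ is even, so the first summand above is genuinely an integer multiple of $(n+m+1)$, and the master equation literally is the Euclidean division, with remainder $3\binom{m+1}{3}$. The hypothesis $n > 3\binom{m+1}{3}-m-1$ is precisely the condition that this remainder lies in $[0,\,n+m+1)$; since the remainder is strictly positive in the relevant range ($m\geq 2$), the ceiling picks up a $+1$, giving $\upr(m,n) = \tfrac{(m+1)(n-m+2)}{2}+1 = \ups(m,n)$, and the formula \eqref{eq:arithmetic-remainder} then drops out by rearrangement.

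The ugly case needs a half-integer adjustment, because $(m+1)(n-m+2)$ is odd while $(n+m+1)$ is even. I would rewrite the master equation as
\[
(m+1)\binom{n+2}{2} = \frac{(m+1)(n-m+2)-1}{2}(n+m+1) + \frac{n+m+1}{2} + 3\binom{m+1}{3},
\]
in which both summands on the right are integers; the second is nonnegative and strictly smaller than $n+m+1$ exactly when $n > 6\binom{m+1}{3}-m-1$, which is the reason the bound in~\eqref{eq:arithmetic-n-bound} doubles in this case. One then reads off $\upr(m,n) = \tfrac{(m+1)(n-m+2)+1}{2} = \ups(m,n)$ and the claimed remainder $3\binom{m+1}{3} - \tfrac{n+m+1}{2}$ by subtraction.

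I do not anticipate any real obstacle; the only subtlety is the parity interplay between $(m+1)(n-m+2)$ and $(n+m+1)$, and this is precisely what forces the two distinct thresholds appearing in~\eqref{eq:arithmetic-n-bound}.
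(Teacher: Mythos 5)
Your proposal is correct and follows essentially the same route as the paper's proof: the same polynomial identity $(m+1)\binom{n+2}2 = \frac{(m+1)(n-m+2)}2(n+m+1) + 3\binom{m+1}3$, the same nice/ugly parity split to turn it into an honest Euclidean division, and the same reading-off of the threshold on $n$ from the requirement that the remainder land in $[0,n+m+1)$ while being strictly positive. If anything, your explicit remark that positivity of the remainder needs $m\geq 2$ is a slight improvement in rigor over the paper's phrasing.
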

\begin{proof}
    In $\kve[m,n]$, we may observe
    \[
        (m+1)\binom{n+2}2 = (n+m+1)\cdot\frac{(m+1)(n-m+2)}2 + 3\binom{m+1}3
    \]
    and hence also
    \[
        (m+1)\binom{n+2}2 = (n+m+1)\cdot\floor{\frac{(m+1)(n-m+2)}2} + \begin{cases}
            3\binom{m+1}3, & \text{if $(m,n)$ is nice,}\\
            3\binom{m+1}3 + \frac{n+m+1}2, & \text{if $(m,n)$ is ugly.}
        \end{cases}
    \]
    Note that if we prove $\upr(m,n)=\ups(m,n)$, then this previous equality already rearranges to \eqref{eq:arithmetic-remainder}.
    Subsequently, once $n$ is large enough so that
    \[
        n+m+1 \geq \begin{cases}
            3\binom{m+1}3, & \text{if $(m,n)$ is nice,}\\
            3\binom{m+1}3 - \frac{n+m+1}2, & \text{if $(m,n)$ is ugly,}
        \end{cases}
    \]
    we observe $\upr(m,n) = \floor{\frac{(m+1)(n-m+2)}2} + 1 = \ups(m,n)$. This bound rearranges to \eqref{eq:arithmetic-n-bound}.
\end{proof}

\section{Families of coordinate configurations}
\label{sec:coord-configs}

In this section, we introduce the inductant construction which we will use to present our inductive strategy for the proof of non-defectivity of the superabundant cases of secant varieties of Segre-Veronese varieties. We formulate this in bidegree $(1,2)$ only for the sake of simplicity, but we hope the ways to generalize to other bidegrees or even to Segre-Veronese varieties with more factors can be easily inferred.

In inductive proofs of non-defectivity of secant varieties, it is common to specialize double points on hyperplanes. This approach does not work well for bidegree $(1,2)$ however because of the many defectivities in bidegrees $(1,1)$ and $(0,2)$. This is similar to the cubic case of the Alexander-Hirschowitz theorem, where Brambilla and Ottaviani \cite{brambilla-ottaviani} were able to overcome this issue by specializing points to higher-codimensional subspaces. Such a specialization may be repeated several times, which leads one to consider collections of subvarieties stemming from such subspaces alongside double points chosen with various constraints.

\begin{definition}
    Let us say a subspace $U'\leq U$ is a \emph{coordinate subspace}, if it is defined by equations $x_i=0$, $i\in I$ for some $I\subseteq\set{0,1,\dots,m}$. Analogously, let us say a $V'\leq V$ is a coordinate subspace if it is given by $y_j=0$, $j\in J$ where $J\subseteq\set{0,1,\dots,n}$. Let us say an algebraic subvariety $L\subseteq \pe^{m,n}$ is a \emph{coordinate subvariety}, if it is of the form $\pe(U')\times\pe(V')$ for coordinate subspaces $U'$, $V'$ of $U$, $V$ respectively. We then also write $L\simeq \pe^{\dim U'-1,\dim V'-1}$.
\end{definition}

\begin{definition}
    \label{def:coord-config}
    For integers $k,m,n\geq0$, let three $2^k$-tuples of nonnegative integers $p_I$, $\tilde u_I$, $\tilde v_I$ indexed by sets $I\subseteq\set{1,\dots,k}$ and satisfying the conditions
    \begin{align}
        \label{eq:coordsubvar-nonempty}
        &\forall t\in\set{1,\dots,k}:\quad \sum_{J\not\ni t}\tilde u_J > 0,\quad \sum_{J\not\ni t}\tilde v_J>0,\\
        \label{eq:coordconfig-sanity}
        &\forall I:\quad \text{if $p_I>0$,}\quad \text{then }\displaystyle\sum_{J\cap I=\emptyset} \tilde u_J>0\text{ and } \sum_{J\cap I=\emptyset}\tilde v_J>0
    \end{align}
    and $\sum \tilde u_I = m+1$, $\sum\tilde v_I = n+1$ define a \emph{coordinate configuration} $Z=Z(\set{p_I},\set{\tilde u_I},\set{\tilde v_I})$ as follows:

    We partition the coordinates $x_0,\dots,x_m$ into $2^k$ sets of sizes $\tilde u_I$ and for $t=1,\dots,k$, let the coordinate subspace $U_t\leq U$ be given by equations $x_i=0$ for all $x_i$ from the sets corresponding to $I\ni t$. Analogously, we construct the coordinate subspaces $V_t\leq V$, $t=1,\dots,k$ and then set coordinate subvarieties $L_t := \pe(U_t)\times\pe(V_t)\subseteq\pe^{m,n}$.

    Then, we choose $p_I$ points on $\bigcap_{t\in I}L_t$ (we interpret the empty intersection as $\pe^{m,n}$) for each $I$, and we make this choice in a general fashion.

    The coordinate configuration $Z$ then consists of $L_1,\dots,L_k$ and double points supported in the points chosen above.
\end{definition}
Let us comment and expand on parts of the Definition~\ref{def:coord-config}; afterward, we will illustrate both the definition and our comments in Example~\ref{ex:basic}.

First, let us note that choosing different partitions of $x_0,\dots,x_m$ resp. $y_0,\dots,y_n$ into sets of sizes $\tilde u_I$ resp. $\tilde v_I$ only changes the resulting configuration $Z$ by a permutation of coordinates, i.e. by automorphisms of the underlying vector spaces -- therefore, we will disregard these differences. Further, since $U_t$ satisfies exactly those coordinate equations that correspond to index sets $I\ni t$, it will have $\codim U_t=\sum_{I\ni t} \tilde u_I$, and analogously $\codim V_t = \sum_{I\ni t}\tilde v_I$. Correspondingly then, the sums $\sum_{J\not\ni t}\tilde u_J$, resp. $\sum_{J\not\ni t}\tilde v_J$ give $\dim U_t$, resp. $\dim V_t$, so the condition \eqref{eq:coordsubvar-nonempty} says that $L_t = \pe(U_t)\times\pe(V_t)$ should be non-empty. We also see that $\codim L_t = \sum_{I\ni t}(\tilde u_I+\tilde v_I)$.

Further, in this view, $\tilde u_\emptyset$ and $\tilde v_\emptyset$ give the numbers of coordinate equations that are not satisfied by any of the respective coordinate subspaces.

Next, we comment on the conditions \eqref{eq:coordconfig-sanity}.
Notice that $\sum_{J\cap I=\emptyset} \tilde u_J$ equals $\dim\bigcap_{t\in I}U_t$: indeed, this intersection satisfies exactly those equations that are satisfied by at least one $U_t$, $t\in I$, and in turn $U_t$ satisfies exactly those coordinate equations that correspond to an index set $J\ni t$. Thus
\[
    \codim \bigcap_{t\in I}U_t = \sum_{J\cap I\neq \emptyset}\tilde u_J,
\]
and subtracting from $m+1 = \sum \tilde u_J$ yields $\dim\bigcap_{t\in I}U_t = \sum_{J\cap I=\emptyset}\tilde u_J$. Analogously, $\sum_{J\cap I=\emptyset}\tilde v_J = \dim\bigcap_{t\in I}V_t$. Hence, the conditions \eqref{eq:coordconfig-sanity} merely state that if some points are to be constrained to $\bigcap_{t\in I}L_t$, then this intersection ought to be non-empty. Note that in verifying the conditions~\eqref{eq:coordconfig-sanity}, it is clearly enough to check sets $I$ that are inclusion-maximal among those with $p_I>0$.

Further, the discussion of the previous paragraph shows that whenever the intersection $\bigcap_{t\in I}L_t$ is non-empty, its codimension is
\[
    \codim \bigcap_{t\in I}L_t = \sum_{J\cap I\neq\emptyset}(\tilde u_J+\tilde v_J).
\]

Whenever we have a coordinate configuration $Z=Z(\set{p_I},\set{\tilde u_I},\set{\tilde v_I})$ defined as above, it will be useful for us to denote codimensions $u_I:=\codim \sum_{t\in I}U_t$, $v_I:=\codim\sum_{t\in I}V_t$, because they will come up shortly in Definition~\ref{def:virtdim}. A~sum of coordinate subspaces is the coordinate subspace given by coordinate equations common to all the summands, and in turn, the codimension of a coordinate subspace is the number of coordinate equations it satisfies. Therefore, $u_I$ is the number of coordinate equations satisfied simultaneously by all $U_t$, $t\in I$. With this and the analogous statement for $v_I$, we obtain
\[
    u_I=\sum_{I\subseteq J\subseteq\set{1,\dots,k}}\tilde u_{J},\qquad v_I=\sum_{I\subseteq J\subseteq\set{1,\dots,k}}\tilde v_{J}
\]
by the construction described in Definition~\ref{def:coord-config}, because the coordinate equations corresponding to $J$ are common to all $U_t$, $t\in I$ if and only if $I\subseteq J$.
Note that the tuples $\set{u_I}$, $\set{v_I}$ could also be used to determine $Z$ instead of $\set{\tilde u_I}$, $\set{\tilde v_I}$, since the former can reconstruct the latter through the inclusion-exclusion formulae
\[
    \tilde u_I=\sum_{I\subseteq J\subseteq \set{1,\dots,k}} (-1)^{\abs J - \abs I} u_J,\qquad \tilde v_I=\sum_{I\subseteq J\subseteq \set{1,\dots,k}} (-1)^{\abs J - \abs I} v_J.
\]
Most properties of $Z$ are more clearly seen from $\tilde u_I$, $\tilde v_I$ than from $u_I$, $v_I$ though, so we will only use $u_I$, $v_I$ as quantities derived from $\tilde u_I$, $\tilde v_I$. Note that non-negativity of all $\tilde u_I$ and $\tilde v_I$ implies $u_I\geq u_J$, $v_I\geq v_J$ whenever $I\subseteq J$. Note also that always $u_\emptyset = m+1$, $v_\emptyset=n+1$.

\begin{example}
\label{ex:basic}
For integers $m\geq1$, $n\geq3$, let a coordinate configuration $Z$ in $\pe^{m,n}$ with $k=2$ subvarieties be given by
\begin{align*}
    p_\emptyset &= p_{\set{1,2}} = 0, & p_{\set1} &= p_{\set2} = m+1,\\
    \tilde u_\emptyset &= m+1, & \tilde u_{\set1} &= \tilde u_{\set2} = \tilde u_{\set{1,2}} = 0,\\
    \tilde v_\emptyset &= n-3, & \tilde v_{\set1} &= \tilde v_{\set2} = 2,\\
    \tilde v_{\set{1,2}} &= 0.
\end{align*}
We find it useful, at least with configurations with a small number of subvarieties, to visualize these parameters using Venn diagrams (see Figure~\ref{fig:venn-example}).

\begin{figure}[tb]
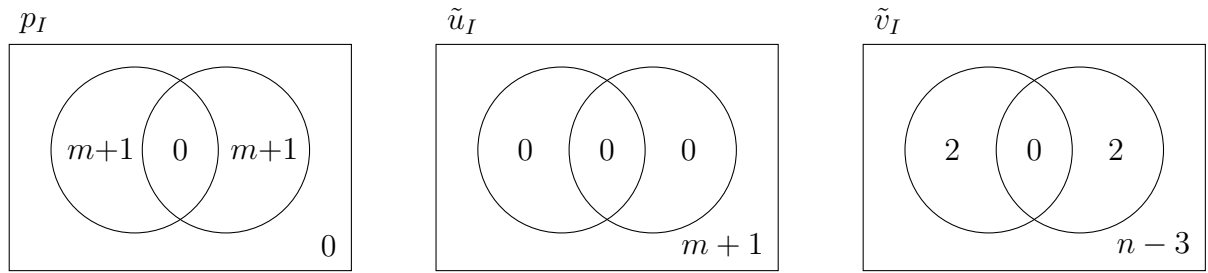

    \centering
    \twovenn{p_I}0{m+1}{m+1}0
    \qquad
    \twovenn{\tilde u_I}{m+1}000
    \qquad
    \twovenn{\tilde v_I}{n-3}220
    \caption{The parameters specifying the configuration $Z$ of Example~\ref{ex:basic}, visualized in Venn diagrams.}
    \label{fig:venn-example}
\end{figure}

These parameters mean we have two coordinate subvarieties $L_1$ and $L_2$. Neither of them is given by any $x_i=0$ coordinate equations, instead each has two equations in coordinates $y_j$ and these are not shared, so we may say without loss of generality that $L_1$ is given by $y_0=y_1=0$ and $L_2$ by $y_2=y_3=0$. In total, there are $2m+2$ double points in the configuration, $m+1$ constrained to $L_1$ and $m+1$ to $L_2$. Note that conditions \eqref{eq:coordsubvar-nonempty} and $\eqref{eq:coordconfig-sanity}$ are satisfied. Notice that $\sum_{J\cap \set{1,2}=\emptyset}\tilde v_j = \tilde v_\emptyset = n-3$ might be zero e.g. when $m=0$, $n=3$ (which corresponds to $L_1$ and $L_2$ having an empty intersection -- two skew lines in $\pe^3$), but this does not contradict~\eqref{eq:coordconfig-sanity}, since $p_{\set{1,2}}$ is zero.

Lastly, the sum codimensions $u_I$, $v_I$ are
\begin{align*}
    u_\emptyset &= m+1, & u_{\set1}&=u_{\set2} = u_{\set{1,2}}=0,\\
    v_\emptyset &= n+1, & v_{\set1}&=v_{\set2} = 2,\\
    v_{\set{1,2}} &= 0.
\end{align*}
\end{example}

\begin{definition}
    \label{def:virtdim}
    Let $Z=Z(\set{p_I},\set{\tilde u_I},\set{\tilde v_I})$ be a coordinate configuration in $\pe^{m,n}$ with $k$ subvarieties $L_1,\dots,L_k$. Denoting $L:=\bigcup_{t\in\set{1,\dots,k}}L_t$, we then define the \emph{virtual dimension (in $\OO(1,2)$)} of $Z$ as
    \begin{align*}
        \vdim Z := \dim\mathcal I_{L}(1,2) - \sum_{I\subseteq\set{1,\dots,k}} p_I\cdot \min\{u_I+v_I, m+n+1\}.
    \end{align*}
    We say $Z$ is \emph{superabundant} (resp. \emph{subabundant}) if $\vdim Z$ is non-positive (resp. non-negative), and we say $Z$ is \emph{equiabundant} if it is both superabundant and subabundant, i.e. if $\vdim Z=0$.

    If we denote by $P_1,\dots,P_s$ the double points that $Z$ has, we further define the \emph{ideal of $Z$} to be the bihomogeneous ideal $\mathcal I_Z$ that is generated by those bihomogeneous $f\in \mathcal I_L$ which are singular at all $P_i$.
    We then say $Z$ is \emph{non-defective (in $\OO(1,2)$)}, if
    \begin{equation}
        \label{eq:expected-dim}
        \dim \mathcal I_Z(1,2) = \max\set{0, \vdim Z};
    \end{equation}
    if not, we say $Z$ is \emph{defective (in $\OO(1,2)$)}.
\end{definition}

Note that due to conditions \eqref{eq:coordconfig-sanity}, if $p_I>0$, we have $\sum_{J\cap I=\emptyset}\tilde u_J>0$. For any non-empty $I$, the set of $J$'s with $J\cap I=\emptyset$ is then disjoint from those with $J\supseteq I$, so we obtain
\[
    u_I = \sum_{J\supseteq I}\tilde u_J \leq \zav{\sum_{J} \tilde u_J} - \zav{\sum_{J\cap I=\emptyset}\tilde u_J} < \sum_{J}\tilde u_J = m+1,
\]
i.e. $u_I\leq m$. Analogously, $v_I\leq n$.
Hence, whenever $p_I>0$, we may write
\begin{equation}
    \label{eq:minimum-simplifies}
    \min\{u_I+v_I, m+n+1\} = \begin{cases}
        m+n+1, & \text{if $I=\emptyset$,}\\
        u_I+v_I, &\text{otherwise,}
    \end{cases}
\end{equation}
and thus the definition of $\vdim Z$ could also be written as
\[
    \vdim Z = \dim\mathcal I_{L}(1,2) - p_{\emptyset}\cdot(m+n+1) - \sum_{\emptyset\neq I\subseteq\set{1,\dots,k}} p_I\cdot (u_I+v_I).
\]

Further, it is useful to note that when $L_t$ is a coordinate subvariety defined by equations $x_i=0$, $i\in I$ and $y_j=0$, $j\in J$, then its ideal is just the monomial ideal $\mathcal I_L = (x_i,y_j\mid i\in I, j\in J)$. Consequently, the ideal $\mathcal I_L$ of $L=\bigcup L_t$ may be computed as the intersection of the individual $\mathcal I_{L_t}$.

The right-hand side of \eqref{eq:expected-dim} is called the \emph{expected dimension} of $\mathcal I_Z(1,2)$ and it is unconditionally a lower bound for it, hence non-de\-fec\-ti\-vi\-ty is merely equivalent to an equality occurring in this lower bound. The motivation for summing the expression $u_I+v_I$ (subject to a minimum with $m+n+1$) is the following:

Suppose first that a double point $P$ is constrained to a coordinate subvariety $L_t$ and consider a bidegree $(1,2)$ monomial $f$. Let us abuse notation slightly by writing $\frac{\partial f}{\partial z}(P)$ for the evaluation of $\frac{\partial f}{\partial z}$ at the support of $P$.  If $f$ already vanishes on $L_t$ and $z$ is a coordinate that does not vanish on $L$, then $\frac{\partial f}{\partial z}(P)$ is always zero, because $\frac{\partial f}{\partial z}$ must still be divisible by a variable that does vanish on $L_t$. Hence the condition $\frac{\partial f}{\partial z}(P)=0$ is redundant. This leaves as relevant conditions, which we expect to decrease the dimension of $\mathcal I_Z(1,2)$, only those stemming from coordinates $z$ which do vanish on $L_t$, and there is exactly $u_{\set t}+v_{\set t}$ of those. More generally, if $P$ is instead constrained to some $\bigcup_{t\in I}L_t$ and $f$ already vanishes on each $L_t$, then $\frac{\partial f}{\partial z}(P)$ may be non-zero only if $z$ vanishes on all $L_t$, $t\in I$. Again, there are exactly $u_I+v_I$ such coordinates $z$, which we then expect to put non-trivial conditions on $f$.

Further, we may notice that some of the expressions $u_I+v_I$ in the definition of virtual dimensions might be zero -- if this is the case, let us say the intersection $\bigcap_{t\in I}L_t$ is \emph{irrelevant} and all of the double points constrained to it are \emph{irrelevant points}. As the following lemma will illustrate, we call them irrelevant, because they contribute nothing to the virtual dimension of $Z$ nor to the actual dimension of $\mathcal I_Z(1,2)$:
\begin{lemma}
    Let $Z$ be a coordinate configuration and let $Z'$ be the coordinate configuration obtained by removing all irrelevant points from $Z$. Then
    $\vdim Z' = \vdim Z$ and $\mathcal I_{Z'} = \mathcal I_Z$.
\end{lemma}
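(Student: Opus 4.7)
The plan is to verify the two equalities separately, both of which reduce to bookkeeping once the right combinatorial claim is isolated.

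The statement about $\vdim$ is immediate. By \eqref{eq:minimum-simplifies}, any irrelevant point is supported in some $\bigcap_{t\in I}L_t$ with $u_I+v_I=0$ and $I\neq\emptyset$ (the case $I=\emptyset$ has $u_\emptyset+v_\emptyset = m+n+2>0$, so it is never irrelevant). Hence removing such a point subtracts a contribution of $\min\{u_I+v_I,m+n+1\}=0$ from the sum defining $\vdim Z$. Since $L$ and consequently $\mathcal I_L(1,2)$ are unaffected, $\vdim Z' = \vdim Z$.

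For $\mathcal I_{Z'}=\mathcal I_Z$, the inclusion $\mathcal I_Z\subseteq\mathcal I_{Z'}$ is automatic because dropping singularity conditions can only enlarge the generating set. For the reverse, it suffices to show the following combinatorial claim: every bihomogeneous $f\in\mathcal I_L$ is automatically singular at every point of $M:=\bigcap_{t\in I}L_t$ whenever $u_I=v_I=0$. Since $\mathcal I_L=\bigcap_t\mathcal I_{L_t}$ is an intersection of monomial ideals and is therefore itself monomial, it suffices to check this for a single monomial $f=x^\alpha y^\beta\in\mathcal I_L$, and to show that $\partial f/\partial z\in\mathcal I_M$ for each coordinate $z$.

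Let $I_t,J_t$ be the index sets defining $L_t$ by $x_i=0$, $i\in I_t$ and $y_j=0$, $j\in J_t$; then $\mathcal I_M$ is generated by $\{x_i:i\in\bigcup_{t\in I}I_t\}\cup\{y_j:j\in\bigcup_{t\in I}J_t\}$. Consider $z=x_c$ with $\alpha_c\geq 1$ (the case $z=y_d$ is symmetric and the case $\alpha_c=0$ is trivial). If $\partial(x^\alpha y^\beta)/\partial x_c$ were not in $\mathcal I_M$, then no factor of $x^{\alpha-e_c}$ would have index in $\bigcup_{t\in I}I_t$ and no factor of $y^\beta$ would have index in $\bigcup_{t\in I}J_t$. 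However, for each $t\in I$ the containment $x^\alpha y^\beta\in\mathcal I_{L_t}$ forces some factor of $x^\alpha y^\beta$ to have index in $I_t\cup J_t$, and by the previous sentence the only such factor can be $x_c$ itself, giving $c\in I_t$. Intersecting over $t\in I$ yields $c\in\bigcap_{t\in I}I_t$, contradicting $u_I=0$. The only real obstacle is tracking the indexing cleanly; once the reduction to monomials is made the combinatorial contradiction from $u_I=v_I=0$ is essentially one line.
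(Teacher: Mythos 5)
Your proof is correct and follows essentially the same approach as the paper's: reduce to monomials because $\mathcal I_L$ is a monomial ideal, then use $u_I+v_I=0$ to show $\partial f/\partial z$ still lies in a monomial ideal that vanishes on the irrelevant point. The only cosmetic difference is that you run the argument as a contrapositive ("if $\partial f/\partial x_c\notin\mathcal I_M$, then $c\in\bigcap_{t\in I}I_t$, contradicting $u_I=0$"), whereas the paper directly picks one $L_j$ not using the equation $z=0$ and finds another generator $w\neq z$ of $\mathcal I_{L_j}$ dividing $\partial f/\partial z$; these are logically equivalent.
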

\begin{proof}
    By definition, irrelevant points contribute nothing to the sum in $\vdim Z$, hence their omission leaves the virtual dimension unchanged.

    For the ideals, it suffices to show that for any monomial $f\in\mathcal I_{\bigcap_{t\in\set{1,\dots,k}}L_t}$, all partial derivatives of $f$ vanish at any irrelevant point. Let $z$ be any of the variables $x_0,\dots,x_m,y_0,\dots,y_n$. If $z$ does not divide $f$, there is nothing to prove. If it does, we use that fact that $u_I+v_I=0$, which means that $z=0$ is not shared as an equation by all $L_t$, $t\in I$, so let us say some $L_j$ lacks it. Therefore, $f$ must be divisible by some other variable $w$ whose equation $w=0$ is one of the ones that define $L_j$. Then $w$ still divides $\frac{\partial f}{\partial z}$, hence $\frac{\partial f}{\partial z}(P) = 0$ for any $P\in L_j\supseteq \bigcap_{t\in\set{1,\dots,k}}L_t$.
\end{proof}
With the Lemma, we are justified in disregarding all irrelevant points when considering questions of abundancy or defectivity.

\begin{example}
    \label{ex:defect}
    Let us investigate the virtual dimension of the coordinate configuration $Z$ from Example~\ref{ex:basic}. First, the ideal $\mathcal I_L(1,2)$. The two subvarieties of $Z$ have ideals $(y_0,y_1)$ and $(y_2,y_3)$ respectively. These are prime ideals and their intersection coincides with their product $(y_0,y_1)(y_2,y_3)=(y_0y_2,y_0y_3,y_1y_2,y_1y_3)$. Thus the bidegree $(1,2)$ part of $\mathcal I_L$ will be spanned by monomials $x_iy_{j_1}y_{j_2}$ with $i$ arbitrary and $j_1\in\set{0,1}$, $j_2\in\set{2,3}$, which gives $\dim\mathcal I_L(1,2)=4(m+1)$.

    In the portion of $\vdim Z$ stemming from points, we previously noted that $u_I=2$ and $v_I=0$ for both $I=\set1$ and $I=\set2$ (and no other $I$ have $p_I>0$), so in total, we arrive at
    \[
        \vdim Z = 4(m+1) - (m+1)\cdot 2 - (m+1)\cdot2= 0,
    \]
    making $Z$ an equiabundant coordinate configuration. To prove its non-defectivity would amount to showing that $\mathcal I_Z(1,2)$ vanishes. We will not prove this, though it is proved in \cite[Proposition 3.11]{abo-brambilla} for all $m\geq1$, $n\geq3$, up to translating into the terminology of coordinate configurations.

    Lastly, we note that the intersection $L_1\cap L_2$ is irrelevant (it might also be empty for $n=3$). Hence, if we were to add some points constrained to it (when $n>3$), the configuration essentially would not change and everything we said in this example would remain true.
\end{example}

\begin{definition}
    Let $Z = Z(\set{p_I},\set{\tilde u_I},\set{\tilde v_I})$ be a coordinate configuration in $\pe^{m,n}$ with coordinate subvarieties $L_1,\dots,L_k$, $k\geq1$. If $L=L_i$ has non-empty intersections with all other $L_j$, then we shall mean by the \emph{restriction} of $Z$ to $L$ the coordinate configuration $\restr ZL$ in $\pe^{m',n'}\simeq L$ that has $k-1$ subvarieties (by an abuse of notation, we index them with $\set{1,\dots,k}\setminus\set i$ instead of $\set{1,\dots,k-1}$) and is specified by $\set{p_{I\cup\set{i}}}$, $\set{\tilde u_I}$, $\set{\tilde v_I}$, $I\subseteq \set{1,\dots,k}\setminus\set i$.
\end{definition}
Geometrically, passing from $Z$ to $\restr ZL$ means we restrict our ambient space to just $L$, replace all other subvarieties with their intersections with $L$ and throw away all points that were not already constrained to a subset of $L_i$.

We will frequently be considering a \emph{family of coordinate configurations} (or just \emph{family} for short). By a family $A$, we mean a collection of coordinate configurations $A(m,n)$ in $\pe^{m,n}$, each with the same number of coordinate subvarieties, for $(m,n)\in S$, where $S\subseteq \zet_{\geq0}^2$ is some set; we say \uv{$A$ is defined on} the elements of $S$. As with individual coordinate configurations, specifying a family of them, up to permutation of coordinates, amounts to choosing a $k\in\zet_{\geq0}$ and providing a collection of non-negative integer-valued functions to describe configurations $A(m,n)$: the function $p_I:S\to \zet_{\geq0}$ for the numbers of double points constrained to each intersection, and then $\tilde u_I, \tilde v_I: S\to\zet_{\geq0}$ to describe the coordinate subvarieties.

\begin{definition}
    Let families $A$, $B$, such that $A$ has $k$ coordinate subvarieties and $B$ has $k+1$ coordinate subvarieties, be given, with $B$ defined on elements of $S_B$, and functions $M,N:S_B\to \zet_{\geq0}$. For an $(m,n)\in S_B$, we will say that $B$ is an \emph{$(M,N)$-inductant of $A$ at $(m,n)$}, if both $A(m,n)$ and $A(M(m,n),N(m,n))$ are defined and $B(m,n)$ arises from $A(m,n)$ by adding an extra coordinate subvariety $L=L_{k+1} \simeq \pe^{M(m,n),N(m,n)}\subset\pe^{m,n}$, specializing some of the double points from their original $\bigcap_{t\in I}L_t$ to $L\cap \bigcap_{t\in I}L_t$ in such a way that the restriction $\restr{B(m,n)}L$ would become $A(M(m,n),N(m,n))$, and finally removing any irrelevant points.

    We say that $B$ is an \emph{$(M,N)$-inductant of $A$}, if it is an $(M,N)$-inductant of $A$ at $(m,n)$ for any $(m,n)$ where all three $B(m,n)$, $A(m,n)$ and $A(M(m,n),N(m,n))$ are defined.
\end{definition}

While we allow $M$, $N$ to be arbitrary functions in the above definition, in our usage in the following sections, they will always be polynomials or quasipolynomials\footnote{We say a function is a \emph{$d$-quasipolynomial}, if it is a polynomial when restricted to any particular tuple of residue classes modulo $d$ of the arguments. A~function is a \emph{quasipolynomial} if it is a $d$-quasipolynomial for some $d$.} in $m$, $n$.

Let us describe how to interpret the inductant construction in terms of the functions $p_I$ and $\tilde u_I$, $\tilde v_I$ resp. $u_I$, $v_I$. Let $L_1,\dots,L_k$ be the coordinate subvarieties of $A(m,n)$, let $L_{k+1}$ be the new extra subvariety to be added. Let $p_I$ and $\tilde u_I$, $\tilde v_I$, resp. $u_I$, $v_I$ be the functions describing $A(m,n)$, whereas $p'_I$ and $\tilde u'_I$, $\tilde v'_I$, resp. $u'_I$, $v'_I$ will be the new functions to describe $B(m,n)$. Let us also write $M=M(m,n)$, $N=N(m,n)$ for short.

In order for each $L_t\cap L_{k+1} \subset L_{k+1}\simeq\pe^{M,N}$ to have the correct number of equations, we need the parameters for $I\not\ni k+1$ to be
\begin{equation}
    \label{eq:inductant-construction}
    \tilde u'_I(m,n) = \tilde u_I(M,N)
    \qquad\text{and}\qquad
    \tilde v'_I(m,n) = \tilde v_I(M,N),
\end{equation}
which leaves the remaining index sets, which may be written as $I\cup\set{k+1}$, with
\begin{equation}
    \tag{\ref{eq:inductant-construction}}
    \tilde u'_{I\cup\set{k+1}}(m,n) = \tilde u_I(m,n) - \tilde u_I(M,N)
    \qquad\text{and}\qquad
    \tilde v'_{I\cup\set{k+1}}(m,n) = \tilde v_I(m,n) - \tilde v_I(M,N).
\end{equation}
Translating to $u_I$ and $v_I$, we also obtain the relations
\begin{equation}
    \tag{\ref{eq:inductant-construction}}
    \begin{aligned}
        u'_I(m,n) &= u_I(m,n),\qquad & u'_{I\cup\set{k+1}}(m,n) &= u_I(m,n) - u_I(M,N),\\
        v'_I(m,n) &= v_I(m,n), & v'_{I\cup\set{k+1}}(m,n) &= v_I(m,n) - v_I(M,N)
    \end{aligned}
\end{equation}
for $I\subseteq\set{1,\dots,k}$. For the points, we need $p_I(M,N)$ from every $p_I(m,n)$ to be specialized to $L_{k+1}$. We must account for the possibility that these become irrelevant though, leading to relations
\begin{equation}
    \tag{\ref{eq:inductant-construction}}
    \begin{split}
    p'_I(m,n) &= p_I(m,n)-p_I(M,N),\\
    p'_{I\cup\set{k+1}}(m,n) &= \begin{cases}
        0, & \text{if $u'_{I\cup\set{k+1}}(m,n)+v'_{I\cup\set{k+1}}(m,n)=0$},\\
        p_I(M,N), & \text{otherwise}
    \end{cases}
    \end{split}
\end{equation}
for $I\subseteq\set{1,\dots,k}$.

\begin{example}
    \label{ex:inductant}
    Let us return to the configurations from Examples~\ref{ex:basic} and \ref{ex:defect} and view them as a family $A(m,n)$ defined on $(m,n)$ with $m\geq1$, $n\geq3$. As an example, let us construct its $(m-1,n)$-inductant family $B(m,n)$, then relations~\eqref{eq:inductant-construction} guide us to
    \begin{align*}
        \tilde u'_\emptyset&=m, & \tilde u'_{\set{3}} &= 1,\\
        \tilde u'_{I} &= 0\quad\text{for all other $I$},\span\omit\\
        \tilde v'_\emptyset&= n-3, & \tilde v'_{\set1} &= \tilde v'_{\set2} = 2,\\
        \tilde v'_{I} &= 0\quad\text{for all other $I$}.\span\omit
    \end{align*}
    In other words, we have added a new subvariety $L_3$ that corresponds to a hyperplane in the $\pe^m$ factor (therefore, it is general with respect to $L_1$, $L_2$ which corresponded to two-codimensional subspaces in the $\pe^n$ factor.)
    With points, we may initially set
    \begin{align*}
        p'_{\set1} &= p'_{\set2} = 1, & p'_{\set{1,3}} &= p'_{\set{2,3}} = m,\\
        p'_I &= 0\quad\text{for all other $I$},\span\omit
    \end{align*}
    but the intersections $L_1\cap L_3$ and $L_2\cap L_3$ are irrelevant, so we erase those points to end up with $p'_{\set{1,3}} = p'_{\set{2,3}} = 0$. We may then conclude that such a $B(m,n)$ makes sense for any $m\geq1$, $n\geq3$, but since $A(m,n)$ is defined on $m\geq1$, $n\geq3$, we conclude that $B$ is an $(m-1,n)$-inductant of $A$ only on $(m,n)$ with $m\geq2$, $n\geq3$.

    In Figure~\ref{fig:venn-inductant}, we visualize the inductant construction again using Venn diagrams. One may interpret the construction as splitting each cell into two smaller ones that sum to the original.
    \begin{figure}[tb]
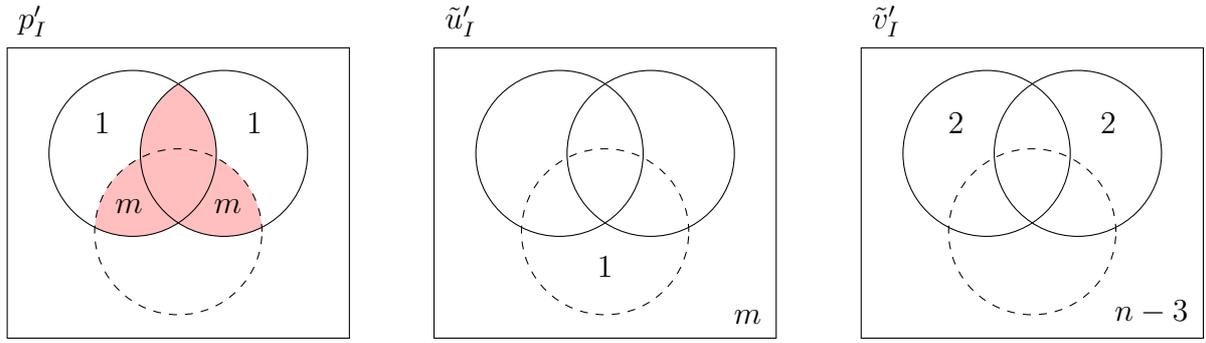

        \centering
        \threevenn{p'_I}{
            \fill[\tmpcolor](-.6,.1)++({\tmpangle}:1.1cm) arc ({180-\tmpangle}:{180+\tmpangle}:1.1cm) arc ({360-\tmpangle}:{360+\tmpangle}:1.1cm);
            \fill[\tmpcolor](-.6,.1)++({\tmpangle-60}:1.1cm) arc ({120-\tmpangle}:{120+\tmpangle}:1.1cm) arc ({300-\tmpangle}:{300+\tmpangle}:1.1cm);
            \fill[\tmpcolor](.6,.1)++({240-\tmpangle}:1.1cm) arc ({240-\tmpangle}:{240+\tmpangle}:1.1cm) arc ({60-\tmpangle}:{60+\tmpangle}:1.1cm);
            \path
                (-1,.5) node{$1$}
                (1,.5) node{$1$}
                (-.65,-.6) node{$m$}
                (.65,-.6) node{$m$}
            ;
        }
        \qquad
        \threevenn{\tilde u'_I}{
            \path
                (2.2,-2.3) node[above left]{$m$}
                (0,-1.4) node{$1$}
            ;
        }
        \qquad
        \threevenn{\tilde v'_I}{
            \path
                (2.2,-2.3) node[above left]{$n-3$}
                (-1,.5) node{$2$}
                (1,.5) node{$2$}
            ;
        }
        \caption{The parameter functions specifying the family $B(m,n)$ in Example~\ref{ex:inductant}, visualized in Venn diagrams. The newly added subvariety $L_3$ is represented by a dashed circle and we leave cells blank instead of writing zeros. Further, in $p'_I$ we list the \uv{initial} values before erasure of irrelevant points and mark the cells where this erasure takes place with a red background.}
        \label{fig:venn-inductant}
    \end{figure}
\end{example}

From relations \eqref{eq:inductant-construction}, we may see that a poor choice of $(M,N)$ may lead to $\tilde u'_I$, $\tilde v'_I$ or $p'_I$ in these relations having values incompatible with the definition of a coordinate configuration for many $(m,n)$, thus allowing us to define the inductant only on a small set $S_B\subseteq\zet_{\geq0}$, or perhaps even nowhere at all.
As long as the inductant $B$ is defined however, it allows us to perform an inductive proof of non-defectivity of $A$, which is the motivation for its name. This is essentially contained in the following two lemmata:
\begin{lemma}
    \label{lem:defect-additivity}
    If $B$ is an $(M,N)$-inductant of a family $A$, then
    \[
        \vdim B(m,n) = \vdim A(m,n) - \vdim A(M(m,n),N(m,n))
    \]
    holds whenever $B$ is defined on $(m,n)$.
\end{lemma}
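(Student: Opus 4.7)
My plan is to split $\vdim Z = \dim\mathcal I_L(1,2) - S(Z)$, where $S(Z):=\sum_I p_I\cdot\min\{u_I+v_I,m+n+1\}$ is the ``point contribution,'' and verify the claim separately for the two summands. That is, I would prove
\[
    \dim\mathcal I_{L^B}(1,2) = \dim\mathcal I_{L^A}(1,2) - \dim\mathcal I_{L^{A(M,N)}}(1,2),
    \qquad
    S(B(m,n)) = S(A(m,n)) - S(A(M,N)),
\]
and then subtract. Here $L^A$, $L^B$ denote the unions of the coordinate subvarieties of $A(m,n)$, $B(m,n)$ respectively, and $L^{A(M,N)}$ is the analogous union for $A(M,N)$ (living in $\pe^{M,N}$).

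For the ideal identity, I would use the short exact sequence of bigraded modules
\[
    0 \to \mathcal I_{L^A}\cap\mathcal I_{L_{k+1}} \to \mathcal I_{L^A} \to (\mathcal I_{L^A}+\mathcal I_{L_{k+1}})/\mathcal I_{L_{k+1}} \to 0,
\]
noting that $\mathcal I_{L^A}\cap\mathcal I_{L_{k+1}} = \mathcal I_{L^A\cup L_{k+1}} = \mathcal I_{L^B}$. The quotient on the right is an ideal in the coordinate ring of $L_{k+1}\simeq\pe^{M,N}$; because all ideals in sight are \emph{monomial} (each $\mathcal I_{L_t}$ is generated by a set of coordinate variables), a short combinatorial check identifies it with $\bigcap_{t=1}^k\mathcal I_{L_t\cap L_{k+1}}$, which in the ambient $\pe^{M,N}$ is precisely $\mathcal I_{L^{A(M,N)}}$. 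Taking bidegree $(1,2)$ pieces of the exact sequence then yields the desired equality.

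For the point identity I would split the sum defining $S(B(m,n))$ according to whether $I\ni k+1$, substitute the relations from~\eqref{eq:inductant-construction}, and compare. The sets $I\not\ni k+1$ contribute $\sum_{I\subseteq\{1,\dots,k\}}(p_I(m,n)-p_I(M,N))\min\{u_I(m,n)+v_I(m,n),m+n+1\}$, accounting for $S(A(m,n))$ minus a ``$p_I(M,N)$'' correction. The remaining terms, indexed by $I_0\cup\{k+1\}$ with $I_0\subseteq\{1,\dots,k\}$, should reproduce this correction shifted to the $(M,N)$-level so that the two together yield $-S(A(M,N))$. For each $I_0\neq\emptyset$ with $p_{I_0}(M,N)>0$, the sanity condition~\eqref{eq:coordconfig-sanity} applied inside $A(M,N)$ forces $u_{I_0}(M,N)\leq M$, $v_{I_0}(M,N)\leq N$, which (combined with the analogous bound inside $A(m,n)$) kills both minima and reduces the comparison to the linear relation $u'_{I_0\cup\{k+1\}}+v'_{I_0\cup\{k+1\}}=(u_{I_0}+v_{I_0})(m,n)-(u_{I_0}+v_{I_0})(M,N)$ from~\eqref{eq:inductant-construction}. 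The case $I_0=\emptyset$, where both minima are active, needs a separate direct check; this gives $(m+n+1)-(M+N+1)=(m-M)+(n-N)$, which equals $u'_{\{k+1\}}+v'_{\{k+1\}}$, so everything matches.

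The subtlety to watch is the bookkeeping of irrelevant points: those $I_0$ for which $u'_{I_0\cup\{k+1\}}+v'_{I_0\cup\{k+1\}}=0$ are dropped from $B(m,n)$ by the inductant construction, so they appear in $S(B(m,n))$ with coefficient $0$; fortunately this is also what they would contribute to $S(A(m,n))-S(A(M,N))$ under the relations above, so omitting them is harmless. The main obstacle is thus not any deep geometry but the careful case analysis distinguishing $I_0=\emptyset$ from $I_0\neq\emptyset$, and the verification that the monomial-ideal identification in the exact sequence is actually the ideal of the scheme-theoretic intersection.
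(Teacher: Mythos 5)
Your proposal is correct and follows essentially the same route as the paper: splitting $\vdim$ into an ideal term and a point term, handling the former by the short exact sequence $0\to\mathcal I_{L_B}\to\mathcal I_{L_A}\to\mathcal I_{L_{A'}}\to 0$ (which is precisely your sequence after the identifications), and handling the latter by separating points by whether they are specialized to $L_{k+1}$ and invoking the relations~\eqref{eq:inductant-construction} together with~\eqref{eq:minimum-simplifies}. Your explicit remarks about the monomial-ideal identification and the bookkeeping of irrelevant points are small points the paper leaves implicit, but the argument is the same.
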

\begin{proof}
    Throughout this proof, let $M:=M(m,n)$, $N:=N(m,n)$ for short.
    Let us deal with the parts of virtual dimensions coming from points and the parts coming from the ideals of subvarieties separately. Let $L_1,\dots,L_k$ be the coordinate subvarieties of $A(m,n)$ and let $L_{k+1}$ the additional subvariety present in $B(m,n)$. Then we will view $A(M,N)$ in $L_{k+1}\simeq \pe^{M,N}$, in this way it has the coordinate subvarieties $L'_t = L_t\cap L_{k+1}$ for $t=1,\dots,k$. Abbreviating
    \[
        \mathcal I_{L_B} := \mathcal I_{\bigcup_{t\in \set{1,\dots,k+1}} L_t}, \qquad \mathcal I_{L_A} := \mathcal I_{\bigcup_{t\in \set{1,\dots,k}} L_t}, \qquad \mathcal I_{L_{A'}} := \mathcal I_{\bigcup_{t\in \set{1,\dots,k}} L'_t},
    \]
    we can then observe the exact sequence
    \begin{equation}
    \label{eq:subvariety-ideals-exactseq}
    \begin{tikzcd}
        0 \arrow[r] & \mathcal I_{L_B} \arrow[r] & \mathcal I_{L_A} \arrow[r] & \mathcal I_{L_{A'}} \arrow[r] & 0,
    \end{tikzcd}
    \end{equation}
    where $\mathcal I_{L_B} \to \mathcal I_{L_A}$ is just the inclusion and $\mathcal I_{L_A} \to \mathcal I_{L_{A'}}$ is the restriction $f\mapsto \restr f{L_{k+1}}$. Further, these maps preserve bidegree, so we may take the exact sequence just in the bidegree $(1,2)$ homogeneous parts and obtain
    \begin{equation}
        \label{eq:inductant-subvar-ideals}
        \dim \mathcal I_{L_B}(1,2) = \dim\mathcal I_{L_A}(1,2) - \dim\mathcal I_{L_{A'}}(1,2)
    \end{equation}
    from the exact sequence.

    Now let us consider any double point $P$ of $A(m,n)$ constrained to some $\bigcap_{t\in I}L_t$. In the construction of $B(m,n)$, it might either be left untouched or it might be further constrained to $L_{k+1}$. In the former case, its contribution to $\vdim B(m,n)$ is the same as its contribution to $\vdim A(m,n)$, while it is not present in $\vdim A(M,N)$ at all. In the latter case, its (negative) contributions to $\vdim B(m,n)$, $\vdim A(m,n)$, $\vdim A(M,N)$ are (in the notation of \eqref{eq:inductant-construction})
    \begin{gather*}
        \min\set{u'_{I\cup\set{k+1}}(m,n)+v'_{I\cup\set{k+1}}(m,n), m+n+1},
        \\
        \min\set{u_I(m,n)+v_I(m,n), m+n+1},
        \\
        \min\set{u_I(M,N)+v_I(M,N), M+N+1}
    \end{gather*}
    respectively. Implicitly, based on the fact that we are in fact handling some point, \eqref{eq:minimum-simplifies} takes effect, so we know that $m+n+1$ (resp. $M+N+1$) is achieved in the above minima only if the respective index set is empty.
    Note that $I\cup\set{k+1}$ cannot be empty, so the first expression is always just
    \[
        u'_{I\cup\set{k+1}}(m,n)+v'_{I\cup\set{k+1}}(m,n) = u_I(m,n)+v_I(m,n) - u_I(M,N) - v_I(M,N).
    \]
    Now it becomes clear that for $I\neq\emptyset$, this will equal the difference
    \[
        \zav{u_I(m,n)+v_I(m,n)} - \zav{u_I(M,N)+v_I(M,N)},
    \]
    which is the difference of the other two expressions; whereas for $I=\emptyset$, we will get
    \begin{align*}
        (m+n+1)-(M+N+1) &= (m+1)+(n+1)-(M+1)-(N+1) =\\&= u_{\emptyset}(m,n)+v_{\emptyset}(m,n) - u_{\emptyset}(M,N) - v_{\emptyset}(M,N).
    \end{align*}
    So in either case, we obtain that the contribution of $P$ to $\vdim B(m,n)$ equals the difference of its contributions to $\vdim A(m,n)$ and $\vdim A(M,N)$.

    Putting the results on ideals and on contributions of points together, we get $\vdim B(m,n) = \vdim A(m,n) - \vdim A(M,N)$.
\end{proof}

\begin{lemma}
    \label{lem:inductant-induction}
    Let $B$ be an $(M,N)$-inductant of $A$ and suppose $B$ is defined on $(m,n)$. If $B(m,n)$ and $A(M(m,n),N(m,n))$ are both non-defective and subabundant (resp. superabundant, resp. equiabundant), then so is $A(m,n)$.
\end{lemma}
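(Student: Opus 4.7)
The plan is to combine Lemma~\ref{lem:defect-additivity} with a standard Castelnuovo-style restriction sequence on $L_{k+1}$, together with an upper-semicontinuity (specialization) argument.

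Write $M=M(m,n)$, $N=N(m,n)$ and let $L_1,\dots,L_k$ be the subvarieties of $A(m,n)$, with $L_{k+1}$ the extra subvariety of $B(m,n)$. First, the abundancy part is immediate: by Lemma~\ref{lem:defect-additivity} we have $\vdim A(m,n)=\vdim B(m,n)+\vdim A(M,N)$, so if both summands have the same sign (or are zero), so does $\vdim A(m,n)$. It remains to prove the equality $\dim\mathcal I_{A(m,n)}(1,2)=\max\set{0,\vdim A(m,n)}$, knowing that the analogous equalities hold for $B(m,n)$ and $A(M,N)$. The lower bound $\dim\mathcal I_{A(m,n)}(1,2)\geq\max\set{0,\vdim A(m,n)}$ is automatic from the definition of virtual dimension, so the task is the matching upper bound.

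Next, I would form an intermediate configuration $A(m,n)'$ obtained from $A(m,n)$ by specializing exactly those double points that get specialized onto $L_{k+1}$ in the passage to $B(m,n)$, but without adding $L_{k+1}$ as a subvariety (and without deleting irrelevant points, which contribute nothing to the ideal by the lemma preceding this one). Upper semicontinuity of the fibre dimension of $\mathcal I(1,2)$ under specialization of the points of the configuration gives
\[
    \dim\mathcal I_{A(m,n)}(1,2)\leq \dim\mathcal I_{A(m,n)'}(1,2).
\]
Now consider the restriction-to-$L_{k+1}$ map on bidegree $(1,2)$, analogous to the map appearing in~\eqref{eq:subvariety-ideals-exactseq}:
\[
    \rho:\mathcal I_{A(m,n)'}(1,2)\longrightarrow \ce[x_0,\dots,x_m,y_0,\dots,y_n]/\mathcal I_{L_{k+1}}\;\text{in bidegree $(1,2)$}.
\]
The kernel of $\rho$ consists of $f\in\mathcal I_{A(m,n)'}(1,2)$ that additionally vanish on $L_{k+1}$; since $f$ already vanishes on $L_1\cup\cdots\cup L_k$ and is singular at every point that $B(m,n)$ requires, these are exactly the elements of $\mathcal I_{B(m,n)}(1,2)$ (here one uses that $\mathcal I_{L_A}\cap\mathcal I_{L_{k+1}}=\mathcal I_{L_B}$). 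The image of $\rho$ is a subspace of $\mathcal I_{A(M,N)}(1,2)\subseteq \mathcal I(1,2)$ on $L_{k+1}\simeq\pe^{M,N}$, because $\rho(f)$ vanishes on every $L_t\cap L_{k+1}$ and inherits the singularity conditions at the specialized points (tangential derivatives of $f$ at a point $P\in L_{k+1}$ restrict to partial derivatives of $\rho(f)$ at $P$). Consequently
\[
    \dim\mathcal I_{A(m,n)'}(1,2)\leq \dim\mathcal I_{B(m,n)}(1,2)+\dim\mathcal I_{A(M,N)}(1,2).
\]

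Finally, I plug in the hypothesis. In each of the three sub-/super-/equi-abundant cases, non-defectivity of $B(m,n)$ and $A(M,N)$ gives $\dim\mathcal I_{B(m,n)}(1,2)=\max\set{0,\vdim B(m,n)}$ and $\dim\mathcal I_{A(M,N)}(1,2)=\max\set{0,\vdim A(M,N)}$, and the two virtual dimensions have a common sign. Therefore the right-hand side of the last displayed inequality equals $\max\set{0,\vdim B(m,n)+\vdim A(M,N)}=\max\set{0,\vdim A(m,n)}$ using Lemma~\ref{lem:defect-additivity}. Chaining with the semicontinuity bound yields the matching upper bound on $\dim\mathcal I_{A(m,n)}(1,2)$, completing the proof.

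The main obstacle is the semicontinuity step: one must justify that the family of ideals $\mathcal I(1,2)$ as the configuration of double points varies is upper semicontinuous in the relevant sense, so that moving points from a general position onto $L_{k+1}$ cannot decrease $\dim\mathcal I(1,2)$. Everything else is a routine bookkeeping of kernels and images of the restriction map together with the additivity already proved in Lemma~\ref{lem:defect-additivity}.
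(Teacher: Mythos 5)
Your proof is correct and follows essentially the same route as the paper: a restriction-to-$L_{k+1}$ sequence combined with the additivity of virtual dimensions from Lemma~\ref{lem:defect-additivity}. The one difference is expository: the paper writes the exact sequence with $\mathcal I_{A(m,n)}(1,2)$ as its middle term, which tacitly identifies the generic point configuration of $A(m,n)$ with the specialization matching $B(m,n)$'s points; you make this specialization step explicit by introducing the intermediate configuration $A(m,n)'$ and invoking upper semicontinuity to pass from $\dim\mathcal I_{A(m,n)}(1,2)$ to $\dim\mathcal I_{A(m,n)'}(1,2)$. This is a worthwhile clarification of a step the paper leaves implicit, not a different argument.
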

\begin{proof}
    Let us again write $M:=M(m,n)$, $N:=N(m,n)$ for short.
    The fact that $A(m,n)$ will have the same abundancy as $B(m,n)$ and $A(M,N)$ follows immediately from Lemma~\ref{lem:defect-additivity}, so let us focus on showing $A(m,n)$ will also be non-defective.

    Let us prove non-defectivity of $A(m,n)$ in the case when the configurations are superabundant; the subabundant version follows analogously, and the equiabundant version follows from their conjunction.
    Observe that akin to the short exact sequence \eqref{eq:subvariety-ideals-exactseq}, we have the exact sequence
    \begin{equation}
    \label{eq:ideals-exactseq}
    \begin{tikzcd}
        0 \arrow[r] & \mathcal I_{B(m,n)}(1,2) \arrow[r] & \mathcal I_{A(m,n)}(1,2) \arrow[r] & \mathcal I_{A(M,N)}(1,2),
    \end{tikzcd}
    \end{equation}
    where the map $\mathcal I_{B(m,n)}(1,2) \to \mathcal I_{A(m,n)}(1,2)$ is the inclusion map and $\mathcal I_{A(m,n)}(1,2) \to \mathcal I_{A(M,N)}(1,2)$ is the restriction to the extra subvariety used in the inductant construction, which is then identified with $\pe^{M,N}$. Note that with the introduction of conditions given by the double points, this last map need not be surjective, but we may still extract an inequality of dimensions
    \[
        \dim\mathcal I_{A(m,n)}(1,2) \leq \dim\mathcal I_{B(m,n)}(1,2)+\dim\mathcal I_{A(M,N)}(1,2).
    \]
    In the superabundant case, non-defectivity of $B(m,n)$ and $A(M,N)$ means
    \[
        \dim\mathcal I_{B(m,n)}(1,2) = \dim\mathcal I_{A(M,N)}(1,2) = 0,
    \]
    so the inequality gives $\dim\mathcal I_{A(m,n)}(1,2)\leq 0$, i.e. the ideal vanishes in the bidegree $(1,2)$ part. Since we already concluded $A(m,n)$ must be superabundant, this means it is non-defective.
\end{proof}

\begin{remark}[commuting inductants]
\label{rmrk:commuting-inductants}
Suppose $(M_1,N_1)$, $(M_2,N_2)$ are two pairs of functions that commute at some $(m,n)\in\zet_{\geq0}^2$, i.e.
\begin{multline*}
    (M_1(M_2(m,n),N_2(m,n)), N_1(M_2(m,n),N_2(m,n))) =\\= (M_2(M_1(m,n),N_1(m,n)), N_2(M_1(m,n),N_1(m,n))).
\end{multline*}
Let us argue that the two corresponding inductant constructions then also commute at $(m,n)$, in the sense that if $A$ is a family of coordinate configurations that is defined on all four
\begin{gather*}
(m,n),\quad (m_1,n_1):= (M_1(m,n),N_1(m,n)), \quad (m_2,n_2):=(M_2(m,n),N_2(m,n)),\\ (m_3,n_3):= (M_1(m_2,n_2),N_1(m_2,n_2)) = (M_2(m_1,n_1),N_2(m_1,n_1)),
\end{gather*}
$B_1$ is an $(M_1,N_1)$-inductant of $A$ at $(m,n)$ and $(m_2,n_2)$, $B_2$ is an $(M_2,N_2)$-inductant of $A$ at $(m,n)$ and $(m_1,n_1)$ and $C$ is an $(M_2,N_2)$-inductant of $B_1$ at $(m,n)$, then $C$ is also an $(M_1,N_1)$-inductant of $B_2$ at $(m,n)$ -- see also Figure~\ref{fig:commuting-inductants}.

\begin{figure}[tb]
    \begin{tabular}{*{2}{m{\containmentwd}}}
        \vtop{\hsize=\containmentwd\hbox to\hsize{\hss
            \begin{tikzcd}[sep=2cm]
                (m_3,n_3)\arrow[r, mapsfrom, "\tinylabel{$\textstyle(M_1,N_1)$}"]\arrow[d, mapsfrom, "\tinylabel{$\textstyle(M_2,N_2)$}"] & (m_2,n_2)\arrow[d, mapsfrom, "\tinylabel{$\textstyle(M_2,N_2)$}"]\\
                (m_1,n_1)\arrow[r, mapsfrom, "\tinylabel{$\textstyle(M_1,N_1)$}"] & (m,n)
            \end{tikzcd}
        \hss}}
        &
        \vtop{\hsize=\containmentwd\hbox to\hsize{\hss
            \begin{tikzpicture}[n/.style = {draw, circle, inner sep=2pt}, a/.style = {-{Latex[scale=1]}, shorten <=2pt, shorten >=2pt}, scale=2.35]
                \node[n] (A) at (1,0) {\familynameCommuting A};
                \node[n] (B1) at (0,0) {\familynameCommuting{B_1}};
                \node[n] (B2) at (1,1) {\familynameCommuting{B_2}};
                \node[n] (C) at (0,1) {\familynameCommuting C};

                \draw[a](B1) -- (A) node[midway, above=.05cm] {\tinylabel{$(M_1,N_1)$}};
                \draw[a](B2) -- (A) node[midway, right=.05cm] {\tinylabel{$(M_2,N_2)$}};
                \draw[a](C) -- (B1) node[midway, right=.05cm] {\tinylabel{$(M_2,N_2)$}};
                \draw[a,dashed](C) -- (B2) node[midway, above=.05cm] {\tinylabel{$(M_1,N_1)$}};
            \end{tikzpicture}
        \hss}}
    \end{tabular}
    \caption{On the left, $(M_1,N_1)$ and $(M_2,N_2)$ commute in $(m,n)$ as functions $\zet^2\to\zet^2$. On the right, arrows indicate inductant constructions, with the inductant family pointing to the family is an inductant of. The three full arrows determine $B_1$, $B_2$, and $C$, we then claim that the inductant relation indicated by the dashed arrow also holds.}
    \label{fig:commuting-inductants}
\end{figure}
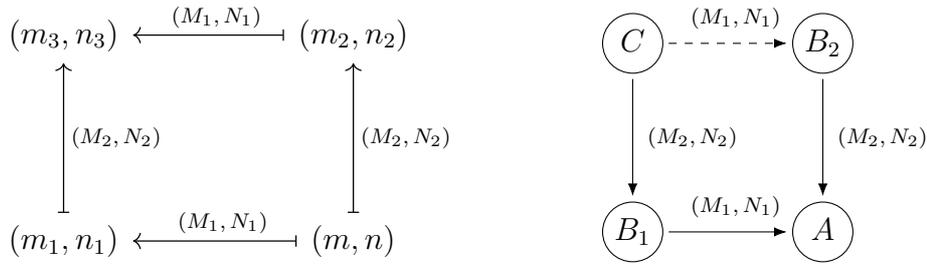

This is essentially because when evaluating the parameters in the inductant families through \eqref{eq:inductant-construction}, only the values at $(m,n)$, $(m_1,n_1)$, $(m_2,n_2)$, $(m_3,n_3)$ matter: Let $A$ have $k$ subvarieties and let $\tilde u_I^{(A)}$, $\tilde v_I^{(A)}$, $p_I^{(A)}$ be its parameter functions; similarly, let us denote the parameter functions for the other families with an analogous superscript. For a more clear notation, let us also index the subvarieties in $B_2$ with $\set{1,\dots,k,k+2}$ instead of $\set{1,\dots,k,k+1}$. If we interpret the subvariety $L_{k+1}$ in $C$ as the last one added instead of $L_{k+2}$, then $C$ indeed becomes an $(M_1,N_1)$-inductant of $B_2$, since (denoting the family to which a function corresponds with a superscript) for example
\begin{align*}
    \tilde u^{(C)}_{I}(m,n) &= \tilde u^{(B_1)}_I(m_2,n_2) = \tilde u^{(A)}_I(m_3,n_3) = \tilde u^{(B_2)}_I(m_1,n_1),\\
    \tilde u^{(C)}_{I\cup\set{k+1}}(m,n) &= \tilde u^{(B_1)}_{I\cup\set{k+1}}(m_2,n_2) = \tilde u^{(A)}_I(m_2,n_2) - \tilde u^{(A)}_I(m_3,n_3) =\\
    &= \tilde u^{(B_2)}_I(m,n) - \tilde u^{(B_2)}_I(m_1,n_1),\\
    \tilde u^{(C)}_{I\cup\set{k+2}}(m,n) &= \tilde u^{(B_1)}_{I}(m,n) - \tilde u^{(B_1)}_{I}(m_2,n_2) = \tilde u^{(A)}_{I}(m_1,n_1) - \tilde u^{(A)}_{I}(m_3,n_3) =\\&= \tilde u^{(B_2)}_{I\cup\set{k+2}}(m_1,n_1),\\
    \tilde u^{(C)}_{I\cup\set{k+1,k+2}}(m,n) &= \tilde u^{(B_1)}_{I\cup\set{k+1}}(m,n) - \tilde u^{(B_1)}_{I\cup\set{k+1}}(m_2,n_2) =\\
    &= \tilde u^{(A)}_I(m,n) - \tilde u^{(A)}_I(m_1,n_1) - \tilde u^{(A)}_I(m_2,n_2) + \tilde u^{(A)}_I(m_3,n_3)=\\
    &= \tilde u^{(A)}_I(m,n) - \tilde u^{(A)}_I(m_2,n_2) - \tilde u^{(A)}_I(m_1,n_1) + \tilde u^{(A)}_I(m_3,n_3)=\\
    &= \tilde u^{(B_2)}_{I\cup\set{k+1}}(m,n) - \tilde u^{(B_2)}_{I\cup\set{k+1}}(m_1,n_1)
\end{align*}
for all $I\subseteq\set{1,\dots,k}$. Similar relations arise for $\tilde v_I$ and $p_I$, justifying that $C$ would also come about as an $(M_1,N_1)$-inductant from $B_2$.
\end{remark}

\begin{remark}
The definitions we presented in this section are similar to the concept of Brambilla-Ottaviani lattices as defined by Torrance and Vannieuwenhoven in \cite{torrance-vannieuwenhoven2021}, which is named after and also aims to generalize the inductive proof of the cubic case of Alexander-Hirschowitz theorem due to Brambilla and Ottaviani \cite{brambilla-ottaviani}. In contrast to the language of Brambilla-Ottaviani lattices, viewing our situation through (families of) coordinate configurations and inductants will allow us to mix different kinds of inductant \uv{steps}, which will prove useful in the following section.
\end{remark}

\section{Superabundant induction strategy for $\mathcal O(1,2)$}
\label{sec:strategies}

\hypertarget{def:A0}{Our main focus in this paper is the following family: Let $A_0(m,n)$ be defined on $(m,n)$ with $n\geq m-2$ and let it have no subvarieties and $\ups(m,n)=\floor{\frac{(m+1)(n-m+2)}2}+1$ double points chosen generally in the whole $\pe^{m,n}$.}
Splitting the behavior of $\ups(m,n)$ into nice and ugly cases, we can express the virtual dimension of $A_0(m,n)$ as
\begin{equation}
\label{eq:A0defect}
\vdim A_0(m,n) = \begin{cases}
    3\binom{m+1}3-(n+m+1), & \text{if $(m,n)$ is nice,}\\
    3\binom{m+1}3-\frac{n+m+1}2, & \text{if $(m,n)$ is ugly.}
\end{cases}
\end{equation}
In particular, $A_0(m,n)$ is superabundant for $n\geq 3\binom{m+1}3-m-1$ for nice $(m,n)$ and for $n\geq 6\binom{m+1}3-m-1$ for ugly $(m,n)$. In both cases, the bound is a cubic polynomial in $m$. Note that we know that $\ups (m,n) = \upr(m,n)$ under these same bounds (by Lemma~\ref{lem:arithmetic}) and recall that non-defectivity of $A_0(m,n)$ just means that $\sigma_{\ups(m,n)}(X_{m,n})$ fills the ambient space. In the superabundant case, this immediately implies non-defectivity of $\sigma_s(X_{m,n})$ for all $s\geq \ups(m,n)$, so once $n$ satisfies the above bounds and we have $\ups(m,n)=\upr(m,n)$, then non-defectivity of $A_0(m,n)$ immediately implies that all superabundant secant varieties of $X_{m,n}$ are non-defective.

We will try to prove non-defectivity of $A_0(m,n)$ for all $(m,n)$ with $m\geq 3$ and $n$ bounded below by a certain cubic polynomial in $m$. Our strategy will be to prove non-defectivity of $A_0(m,n)$ by an induction using two inductant families constructed from $A_0$. The proof of non-defectivity for these inductant families will require inductants of their own, etc. We will terminate this process at families where $\mathcal I_{\bigcup_{t\in \set{1,\dots,k}}L_t}(1,2)$ vanishes and which have no double points, these points having become irrelevant in the inductant construction.

A~partial inductive strategy to prove non-defectivity of $A_0$ is in essence already present in \cite{abo-brambilla}. Though the methods therein differ from ours, we believe an alternative proof could be given using the technique of inductants. We comment on this in Remark~\ref{rmrk:vertical-alternative}, but for now we refer to \cite{abo-brambilla} as much as possible so as not to encumber our presentation unnecessarily.

We define the following family of configurations: let $A_1(m,n)$ for $n\geq m$ have a single subvariety $L$ given by $y_0=y_1=0$ and let it have $m+1$ double points supported on the whole $\P^{m,n}$ and $\ups(m,n-2)$ double points supported on $L$. This is an $(m,n-2)$-inductant of $A_0$. Comparing virtual dimensions, we then see that
\[
\vdim A_1(m,n) = \begin{cases}
    -2, & \text{if $(m,n)$ is nice},\\
    -1, & \text{if $(m,n)$ is ugly}.
\end{cases}
\]
In particular, it is always superabundant.

\begin{prop}
    \label{prop:A1}
    $A_1(m,n)$ is non-defective for all $n\geq m\geq 3$. In particular, if $n\geq m\geq3$ and $A_0(m,n-2)$ is superabundant and non-defective, then so is $A_0(m,n)$.
\end{prop}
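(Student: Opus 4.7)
The plan is to reduce the non-defectivity of $A_1(m,n)$ to the explicit vanishing result cited in Example~\ref{ex:defect} by constructing a further $(m,n-2)$-inductant on top of $A_1$, and then to obtain the ``in particular'' clause as a direct invocation of Lemma~\ref{lem:inductant-induction} using the fact that $A_1$ is itself an $(m,n-2)$-inductant of $A_0$.

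First I would build an $(m,n-2)$-inductant $B$ of $A_1$ by adjoining a second coordinate subvariety $L_2$ cut out by $y_2=y_3=0$. Unwinding the formulae~\eqref{eq:inductant-construction}: all $m+1$ general points of $A_1(m,n)$ specialize onto $L_2$, while the $\ups(m,n-2)$ points on $L_1$ split into $\ups(m,n-2)-\ups(m,n-4)=m+1$ points remaining on $L_1$ alone and $\ups(m,n-4)$ points specialized further onto $L_1\cap L_2$. The identity $\ups(m,n-2)-\ups(m,n-4)=m+1$ is immediate from the case-wise formula for $\ups$ and holds in both the nice and ugly cases. Since $u_{\{1,2\}}+v_{\{1,2\}}=0$ (no coordinate equation is shared between $L_1$ and $L_2$), the intersection $L_1\cap L_2$ is irrelevant and the points on it are discarded. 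What remains is precisely the equiabundant two-subvariety configuration of Example~\ref{ex:defect}, whose non-defectivity is established in \cite[Proposition 3.11]{abo-brambilla}.

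Next the proof will proceed by induction on $n$ in steps of two, so that each parity class of $n$ is treated separately. The inductive step is immediate from Lemma~\ref{lem:inductant-induction} applied to $B$: both $B(m,n)$ and $A_1(m,n-2)$ are superabundant (recall $\vdim A_1\in\{-1,-2\}$) and, by induction, non-defective, so $A_1(m,n)$ is non-defective. The main remaining task is to secure the base of the induction, namely the two smallest admissible values of $n$ in each parity class; I expect these small cases to be the principal obstacle and they will have to be settled either by direct inspection or by invoking the computer-assisted certificates described in Section~\ref{sec:computations}.

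Finally, for the ``in particular'' clause: since $A_1$ is itself an $(m,n-2)$-inductant of $A_0$, Lemma~\ref{lem:defect-additivity} gives
\[
    \vdim A_0(m,n)=\vdim A_1(m,n)+\vdim A_0(m,n-2),
\]
whose two summands are non-positive (the first equals $-1$ or $-2$ by the computation in the excerpt, the second by hypothesis), so $A_0(m,n)$ is superabundant. Combining the just-proved non-defectivity of $A_1(m,n)$ with the hypothesis on $A_0(m,n-2)$, Lemma~\ref{lem:inductant-induction} then delivers the non-defectivity of $A_0(m,n)$.
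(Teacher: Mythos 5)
Your reduction of $A_1(m,n)$ to the two-subvariety configuration of Example~\ref{ex:defect} via an $(m,n-2)$-inductant is correct in itself: the computation $\ups(m,n-2)-\ups(m,n-4)=m+1$ holds unconditionally, $p'_\emptyset=0$ because $p_\emptyset=m+1$ is constant in $n$, and the points pushed onto $L_1\cap L_2$ are indeed irrelevant since $L_1$ and $L_2$ share no coordinate equation. The ``in particular'' clause is likewise handled correctly via Lemmas~\ref{lem:defect-additivity} and \ref{lem:inductant-induction}.

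The gap is in the base of your induction. The step $A_1(m,n-2)\Rightarrow A_1(m,n)$ requires $n-2\geq m$ (so that $A_1(m,n-2)$ is defined), hence the base cases are $A_1(m,m)$ and $A_1(m,m+1)$ for \emph{every} $m\geq 3$ --- an infinite family depending on $m$, whose members grow without bound. These cannot be dispatched by a finite list of computer certificates, contrary to what you suggest; closing the loop would require a second inductant decreasing $m$ (in the spirit of Remark~\ref{rmrk:vertical-alternative}, which the paper explicitly declines to pursue), and your proposal does not sketch one. The paper's proof of Proposition~\ref{prop:A1} avoids any induction on $n$ whatsoever: it introduces the auxiliary family $A_1'$ obtained by removing one point from $L$, invokes \cite[Corollary 3.12]{abo-brambilla} to obtain non-defectivity of $A_1'(m,n)$ directly for all $n\geq m\geq 1$, and then --- in the ugly case where $\dim\mathcal I_{A_1'(m,n)}(1,2)=1$ --- shows that the last double point kills the single remaining generator $f$ by a projection argument onto $\pe^{m,1}$ citing \cite[Theorem 3.3 (4)]{catalisano-geramita-gimigliano}; this is precisely where the hypothesis $m\geq 3$ is used. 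That route has no base cases at all.
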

Let us note that we will actually use this proposition only for nice pairs $(m,n)$. However, we think that including the ugly case as well makes for a more concise statement and does not make the proof much more difficult.
\begin{proof}
    Consider the family $A_1'$ that only differs from $A_1$ by having one less point on $L$, this will be either equiabundant or subabundant due to
    \[
    \vdim A'_1(m,n) = \begin{cases}
        0, & \text{if $(m,n)$ is nice},\\
        1, & \text{if $(m,n)$ is ugly}.
    \end{cases}
    \]
    Reformulating \cite[Corollary 3.12]{abo-brambilla} in the language of inductants, $A_1'(m,n)$ is non-defective for $n\geq m\geq1$, hence $\dim\mathcal I_{A'_1(m,n)}(1,2)=0$ resp. $1$ in these cases. For the former case, there is nothing to prove, since adding an extra double point leaves $\mathcal I_{A_1(m,n)}(1,2)=0$.

    In the latter case, the vector space $\mathcal I_{A'_1(m,n)}(1,2)$ is generated by a single polynomial, say $f\neq0$. Let $P\in L$ be the extra double point that $A_1(m,n)$ has compared to $A'_1(m,n)$, let $P_0,\dots,P_m\in \pe^{m,n}$ be the unconstrained points of $A'_1(m,n)$ and suppose for the sake of contradiction that $\mathcal I_{A_1(m,n)}(1,2)\neq0$. This means that we already had $\frac{\partial f}{\partial y_0}(P)=\frac{\partial f}{\partial y_1}(P)=0$, and since this holds for a general choice of $P\in L$, we have $\frac{\partial f}{\partial y_0},\frac{\partial f}{\partial y_1}\in (y_0,y_1)$. Together with $f\in (y_0,y_1)$, this means $f\in U\otimes \gener{y_0^2,y_0y_1,y_1^2}$.
    To reach a contradiction, let us show that this is not the case for a general choice of points in the configuration $A'_1(m,n)$.

    Suppose that $L=\pe(U)\times\pe(W)$ for a two-codimensional coordinate subspace $W\leq V$. Then we may view $U\otimes \gener{y_0^2,y_0y_1,y_1^2}$ as the bidegree $(1,2)$ homogeneous part of $\Sym U\otimes\Sym(W^\perp)$, which is the bigraded algebra associated to $\pe(U)\times\pe(W^\perp)\simeq \pe^{m,1}$. Then $f$ is a non-zero polynomial in this algebra and it is singular at the $m+1$ projections of $P_0,\dots,P_m$ onto $\pe(U)\times\pe(W^\perp)$. This would mean the $(m+1)$-th secant variety of $X_{m,1}$ does not fill its ambient space, which contradicts \cite[Theorem 3.3 (4)]{catalisano-geramita-gimigliano} for $m\geq3$. Therefore, $f$ could not have lied in $U\otimes \gener{y_0^2,y_0y_1,y_1^2}$, and thus $\mathcal I_{A_1(m,n)}(1,2)=0$.
\end{proof}

With this result in hand, we mainly need to provide an induction in which $m$ increases.
Because of the arithmetic considerations we discussed in Section~\ref{sec:preliminaries}, we treat the nice and ugly pairs $(m,n)$ separately, providing different strategies utilizing different systems of inductant families to facilitate the proof.

\subsection{The nice case}
\label{subsec:nice}

In our strategy for the nice cases, we will use several families of coordinate configurations that arise from $A_0$ through a series of inductant constructions. These relationships are captured in Figure~\ref{fig:nicestrategy}. Note that while the aim of the strategy is to handle $A_0(m,n)$ in the nice cases, it will be advantageous to use some of the other configurations even in the ugly cases. This is essentially because the nice--ugly distinction looses significance as some points become irrelevant and are erased during the successive inductant constructions.

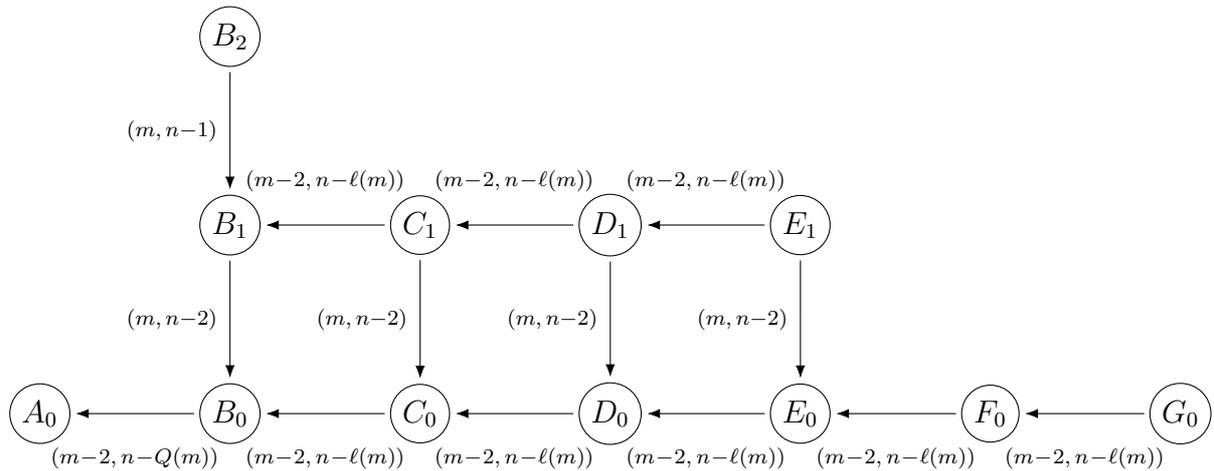
\begin{figure}[tb]
    \centering
    \begin{tikzpicture}[n/.style = {draw, circle, inner sep=2pt}, a/.style = {-{Latex[scale=1]}, shorten <=2pt, shorten >=2pt}, scale=2.35]
        \node[n] (A0) at (0,0) {\familyname A0};
        \node[n] (B0) at (1,0) {\familyname B0};
        \node[n] (C0) at (2,0) {\familyname C0};
        \node[n] (D0) at (3,0) {\familyname D0};
        \node[n] (E0) at (4,0) {\familyname E0};
        \node[n] (F0) at (5,0) {\familyname F0};
        \node[n] (G0) at (6,0) {\familyname G0};
        \node[n] (B1) at (1,1) {\familyname B1};
        \node[n] (B2) at (1,2) {\familyname B2};
        \node[n] (C1) at (2,1) {\familyname C1};
        \node[n] (D1) at (3,1) {\familyname D1};
        \node[n] (E1) at (4,1) {\familyname E1};

        \draw[a](B0) -- (A0) node[midway, below=.3cm] {\tinylabel{$(m-2,n-Q(m))$}};
        \draw[a](C0) -- (B0) node[midway, below=.3cm] {\tinylabel{$(m-2,n-\ell(m))$}};
        \draw[a](D0) -- (C0) node[midway, below=.3cm] {\tinylabel{$(m-2,n-\ell(m))$}};
        \draw[a](E0) -- (D0) node[midway, below=.3cm] {\tinylabel{$(m-2,n-\ell(m))$}};
        \draw[a](F0) -- (E0) node[midway, below=.3cm] {\tinylabel{$(m-2,n-\ell(m))$}};
        \draw[a](G0) -- (F0) node[midway, below=.3cm] {\tinylabel{$(m-2,n-\ell(m))$}};

        \draw[a](C1) -- (B1) node[midway, above=.3cm] {\tinylabel{$(m-2,n-\ell(m))$}};
        \draw[a](D1) -- (C1) node[midway, above=.3cm] {\tinylabel{$(m-2,n-\ell(m))$}};
        \draw[a](E1) -- (D1) node[midway, above=.3cm] {\tinylabel{$(m-2,n-\ell(m))$}};

        \draw[a] (B2)--(B1) node[midway, left=.1cm] {\tinylabel{$(m,n-1)$}};
        \draw[a] (B1)--(B0) node[midway, left=.1cm] {\tinylabel{$(m,n-2)$}};
        \draw[a] (C1)--(C0) node[midway, left=.1cm] {\tinylabel{$(m,n-2)$}};
        \draw[a] (D1)--(D0) node[midway, left=.1cm] {\tinylabel{$(m,n-2)$}};
        \draw[a] (E1)--(E0) node[midway, left=.1cm] {\tinylabel{$(m,n-2)$}};
    \end{tikzpicture}
    \caption{Families of coordinate configurations in the inductive strategy for the nice case. An arrow labeled with $(M,N)$ denotes that a family is an $(M,N)$-inductant of the family being pointed to. We denote $Q(m)=3m^2-6m+2$, $\ell(m)=12m-22$ for short.}
    \label{fig:nicestrategy}
\end{figure}

Though these inductant constructions essentially determine these families, let us go through them one by one to specify precisely where we define them, along with some further details. Here, we choose to describe each family using the parameters $\tilde u_I$, $\tilde v_I$, since from them, it is more readily seen that the configurations make sense for the given $(m,n)$.
Two particular univariate polynomials will appear prominently, we denote them $Q(m)=3m^2-6m+2$ and $\ell(m)=12m-22$ for short.

\begin{itemize}
\famitem B0{
For any nice $(m,n)$ with $m\geq 2$ and $n\geq\max\set{3m^2-5m-2, Q(m)}$ (note that this equals $3m^2-5m-2$ for $m\geq4$), we define $B_0(m,n)$ with $k=1$ coordinate subvariety through
\begin{align*}
    \tilde u_\emptyset &= m-1, & \tilde u_{\set1} &= 2, \\
    \tilde v_\emptyset &= n+1-Q(m), & \tilde v_{\set1} &= Q(m),\\
    p_\emptyset &= \ups(m,n) - \ups(m-2,n-Q(m)) = & p_{\set1} &= \ups(m-2, n-Q(m)) = \\
    &=  n + \frac{3m^3-9m^2+4m+4}{2},
    &&= \frac{mn-n-3m^3+8m^2-3m}{2}.
\end{align*}
Comparing with the relations \eqref{eq:inductant-construction}, it is straightforward to verify that $B_0$ is an $(m-2, n-Q(m))$-inductant of $A_0$. By using Lemma~\ref{lem:defect-additivity} with \eqref{eq:A0defect}, we obtain
\begin{align*}
    \vdim B_0(m,n) &= \vdim A_0(m,n) - \vdim A_0(m-2,n-Q(m)) =\\&= 3\zav{\binom{m+1}3-\binom{m-1}3} - (Q(m)+2) = -1,
\end{align*}
making all $B_0(m,n)$ superabundant.}
\par
Note that within this definition, the bound $n\geq Q(m)$ is given to ensure that $\tilde v_\emptyset > 0$, whereas $n\geq 3m^2-5m-2$ is to ensure $p_{\set1}\geq 0$. This stems from $\ups(m,n)\geq 1\iff n-m+2\geq0$, hence $\ups(m-2, n-Q(m))\geq 1\iff n\geq Q(m)+(m-2)-2 = 3m^2-5m-2$. Similar bounds chosen for the sake of non-negativity of the individual parameters will appear in some of the following families of configurations as well.

\famitem B1{
For any  $(m,n)$ with $m\geq2$, $n\geq Q(m)+1$, we define $B_1(m,n)$ with $k=2$ coordinate subvarieties through
\begin{align*}
    \tilde u_\emptyset &= m-1, & \tilde u_{\set1} &= 2,\\
    \tilde u_{\set2} &= 0, & \tilde u_{\set{1,2}} &= 0,\\
    \tilde v_\emptyset &= n-1-Q(m), & \tilde v_{\set{1}} &= Q(m),\\
    \tilde v_{\set2} &= 2, & \tilde v_{\set{1,2}} &= 0,\\
    p_\emptyset &= 2, & p_{\set1} &= m-1,\\
    p_{\set2} &= n+\frac{3m^3-9m^2+4m}{2}, & p_{\set{1,2}} &= 0.
\end{align*}
This is an $(m,n-2)$-inductant of $B_0$; note that $p_{\set{1,2}}=0$ is due to erasure of irrelevant points. Note further that we define $B_1(m,n)$ even for ugly $(m,n)$, with the parameter functions indeed giving integer values even when $(m,n)$ is ugly. Applying Lemma~\ref{lem:defect-additivity} at nice $(m,n)$, we obtain $\vdim B_1(m,n) = 0$ since all $B_0$ have virtual dimension $-1$. Since all parameter functions of $B_1$ are polynomials in $m$, $n$, the virtual dimension $\vdim B_1(m,n)$ must also be a polynomial, hence it must be $0$ even for ugly $(m,n)$. Thus $B_1$ is equiabundant.
}

\famitem B2{
For any $(m,n)$ with $m\geq2$, $n\geq Q(m)+2$, we define $B_2(m,n)$ with $k=3$ coordinate subvarieties through
\begin{align*}
    \tilde u_\emptyset &= m-1, & \tilde u_{\set1} &= 2,\\
    \tilde u_I &= 0 \quad\text{for all other $I$},\span\omit\\
    \tilde v_\emptyset &= n-2-Q(m), & \tilde v_{\set1} &= Q(m),\\
    \tilde v_{\set2} &= 2, & \tilde v_{\set3} &= 1,\\
    \tilde v_I &= 0 \quad\text{for all other $I$},\span\omit\\
    p_{\set3} &= 2, & p_{\set{2}} &= 1,\\
    p_I &= 0 \quad\text{for all other $I$}.\span\omit\\
\end{align*}
This is an $(m,n-1)$-inductant of $B_1$. Being an inductant of an equiabundant family, it is also equiabundant.
}

\famitem C0{
For any nice $(m,n)$ with $m\geq 4$, $n\geq\max\set{3m^2-5m-2, Q(m)+2}$ (note that this equals $3m^2-5m-2$ for $m\geq6$), we define $C_0(m,n)$ with $k=2$ coordinate subvarieties through
\begin{align*}
    \tilde u_\emptyset &= m-3, & \tilde u_{\set1} &= 2,\\
    \tilde u_{\set2} &= 2, & \tilde u_{\set{1,2}} &= 0,\\
    \tilde v_{\emptyset} &= n-1-Q(m), & \tilde v_{\set1} &= Q(m-2) = 3m^2-18m+26,\\
    \tilde v_{\set2} &= 2, & \tilde v_{\set{1,2}} &= 12m-24,\\
    p_\emptyset &= 9m^2-24m+12, & p_{\set1} &= n-3m^2+5m+2,\\
    p_{\set2} &= n + \frac{3m^3-27m^2+52m-20}{2}, & p_{\set{1,2}} &= \ups(m-4, n-2-Q(m)) =\\&&&= \frac{mn-3n-3m^3+14m^2-13m-4}2.
\end{align*}
This is an $(m-2,n-\ell(m))$-inductant of $B_0$ and hence equiabundant, since $B_0$ has constant virtual dimension. In fact, from this point on, all families we name in this subsection will be equiabundant for the same reasons, so we shall stop noting it.
}

\famitem C1{
For any $(m,n)$ with $m\geq4$, $n\geq Q(m)+3$, we define $C_1(m,n)$ with $k=3$ coordinate subvarieties through
\begin{align*}
    \tilde u_\emptyset &= m-3, & \tilde u_{\set1} &= \tilde u_{\set2} = 2,\\
    \tilde u_I &= 0\quad \text{for all other $I$},\span\omit\\
    \tilde v_\emptyset &= n-3-Q(m), & \tilde v_{\set1} &= Q(m-2),\\
    \tilde v_{\set2} &= \tilde v_{\set3} = 2, & \tilde v_{\set{1,2}} &= 12m-24,\\
    \tilde v_I &= 0\quad\text{for all other $I$},\span\omit\\
    p_{\set1} &= p_{\set2} = 2, & p_{\set{1,2}} &= m-3,\\
    p_{\set3} &= 9m^2-24m+12, & p_I &= 0\quad\text{for all other $I$}.
\end{align*}
This is both an $(m-2,n-\ell(m))$-inductant of $B_1$ (with $L_2$ being the newly added subvariety in the inductant construction) and, due to Remark~\ref{rmrk:commuting-inductants}, also an $(m,n-2)$-inductant of $C_0$ (with $L_3$ being the newly added subvariety). Note that $p_{\set{1,3}} = p_{\set{2,3}} = p_{\set{1,2,3}} = 0$ due to erasure of irrelevant points.
}

\famitem D0{
Since $D_0$, which we define on nice $(m,n)$ with $m\geq6$, $n\geq\max\set{3m^2-5m-2, Q(m)+4}$ (note that this equals $3m^2-5m-2$ for $m\geq8$) and which has $k=3$ coordinate subvarieties, comes about by performing a second inductant construction on $C_0$ with $(M,N)=(m-2,n-\ell(m))$, there is a symmetry between the parameters whose indices differ only by permuting these subvarieties corresponding to the same inductant step. We use this symmetry to condense the description, which would otherwise at this point be starting to become quite cumbersome -- in the following, $J$ is always a subset of $\set{2,3}$:
\begin{align*}
    \tilde u_{\emptyset} &= m-5, & \tilde u_{\set1} &= \tilde u_{\set2} = \tilde u_{\set3} = 2,\\
    \tilde u_I &= 0\quad\text{for all other $I$},\span\omit\\
    \tilde v_J &= \begin{cases}
        n-3-Q(m), & \text{if $\abs J=0$,}\\
        2, & \text{if $\abs J = 1$,}\\
        0, & \text{if $\abs J = 2$,}
    \end{cases}
    &
    \tilde v_{\set1\cup J} &= \begin{cases}
        Q(m-4), & \text{if $\abs J = 0$,}\\
        12m-48, & \text{if $\abs J = 1$,}\\
        24, & \text{if $\abs J = 2$,}
    \end{cases}\\
    p_J &= \begin{cases}
        36m-84, & \text{if $\abs J=0$,}\\
        9m^2-60m+96, & \text{if $\abs J = 1$,}\\
        n+\frac{3m^3-45m^2+172m-212}2, & \text{if $\abs J = 2$,}
    \end{cases}
    &
    p_{\set1\cup J} &= \begin{cases}
        0, & \text{if $\abs J=0$,}\\
        n-3m^2+5m+2, & \text{if $\abs J = 1$,}\\
        \ups(m-6,n-4-Q(m)) & \text{if $\abs J = 2$.}\\
        \quad{}={}\frac{mn-5n-3m^3+20m^2-23m-8}2,\span\omit
    \end{cases}
\end{align*}
Note that $p_{\set1}=0$ is \emph{not} due to an erasure of irrelevant points but rather just a curious arithmetic occurrence.
}

\famitem D1{
For any $(m,n)$ with $m\geq6$, $n\geq Q(m)+5$, we will define $D_1(m,n)$ with $k=4$ coordinate subvarieties through (again, $J$ is a subset of $\set{2,3}$):
\begin{align*}
    \tilde u_{\emptyset} &= m-5, & \tilde u_{\set1} &= \tilde u_{\set2} = \tilde u_{\set3} = 2,\\
    \tilde u_I &= 0\quad\text{for all other $I$},\span\omit\\
    \tilde v_J &= \begin{cases}
        n-5-Q(m), & \text{if $\abs J=0$,}\\
        2, & \text{if $\abs J = 1$,}\\
        0, & \text{if $\abs J = 2$,}
    \end{cases}
    &
    \tilde v_{\set1\cup J} &= \begin{cases}
        Q(m-4), & \text{if $\abs J = 0$,}\\
        12m-48, & \text{if $\abs J = 1$,}\\
        24, & \text{if $\abs J = 2$,}
    \end{cases}\\
    \tilde v_{\set4} &= 2, & \tilde v_{I} &= 0\quad\text{for all other $I\ni 4$,}\\
    p_J &= \begin{cases}
        0, & \text{if $\abs J<2$,}\\
        2, & \text{if $\abs J = 2$,}
    \end{cases}
    &
    p_{\set1\cup J} &= \begin{cases}
        0, & \text{if $\abs J=0$,}\\
        2, & \text{if $\abs J = 1$,}\\
        m-5, & \text{if $\abs J = 2$,}\\
    \end{cases}\\
    p_{\set{4}} &= 36m-84, & p_I &= 0\quad\text{for all $I\supsetneq\set4$.}
\end{align*}
This is both an $(m-2,n-\ell(m))$-inductant of $C_1$ (with $L_3$ being the newly added subvariety in the inductant construction) and, due to Remark~\ref{rmrk:commuting-inductants}, also an $(m,n-2)$-inductant of $D_0$ (with $L_4$ being the newly added subvariety). Erasure of points is responsible for $p_I=0$ for $I\supsetneq\set4$.
}

\famitem E0{
For any $(m,n)$ with $m\geq8$, $n\geq\max\set{3m^2-5m-2, Q(m)+5}$ (note that this equals $3m^2-5m-2$ for $m\geq7$), we define $E_0(m,n)$ with $k=4$ coordinate subvarieties through the following choice of parameters, where we denote by $J$ a subset of $\set{2,3,4}$:
\begin{align*}
    \tilde u_I &= \begin{cases}
        m-7, & \text{if $\abs I = 0$,}\\
        2, & \text{if $\abs I = 1$,}\\
        0, & \text{if $\abs I > 1$,}
    \end{cases}\span\omit\\
    \tilde v_J &= \begin{cases}
        n-5-Q(m), & \text{if $\abs J=0$,}\\
        2, & \text{if $\abs J=1$,}\\
        0, & \text{if $\abs J>1$,}
    \end{cases}
    &
    \tilde v_{\set1\cup J} &= \begin{cases}
        Q(m-6), & \text{if $\abs J=0$,}\\
        12m-72, & \text{if $\abs J=1$,}\\
        24, & \text{if $\abs J=2$,}\\
        0, & \text{if $\abs J=3$,}
    \end{cases}\\
    p_J &= \begin{cases}
        72, & \text{if $\abs J=0$,}\\
        36m-156, & \text{if $\abs J=1$,}\\
        9m^2-96m+252, & \text{if $\abs J=2$,}\\
        0, & \text{if $\abs J=3$,}
    \end{cases}
    &
    p_{\set1\cup J} &= \begin{cases}
        0, & \text{if $\abs J=0$,}\\
        0, & \text{if $\abs J=1$,}\\
        n-3m^2+5m+2, & \text{if $\abs J=2$,}\\
        0, & \text{if $\abs J=3$.}
    \end{cases}
\end{align*}
This is an $(m-2,n-\ell(m))$-inductant of $D_0$ on nice $(m,n)$ where it is defined, though we define $E_0$ also on ugly $(m,n)$ satisfying the bounds above as well.
Note that $p_{\set{2,3,4}} = p_{\set{1,2,3,4}} = 0$ is due to erasure of irrelevant points.
}

\famitem E1{
For any $(m,n)$ with $m\geq 8$, $n\geq Q(m)+7$, we define $E_1(m,n)$ with $k=5$ coordinate subvarieties through ($J$ being a subset of $\set{2,3,4}$)
\begin{align*}
    \tilde u_\emptyset &= m-7, & \tilde u_{\set1} &= u_{\set2} = u_{\set3} = u_{\set4} = 2,\\
    \tilde u_I &= 0\quad\text{for all other $I$},\span\omit\\
    \tilde v_J &= \begin{cases}
        n-7-Q(m), & \text{if $\abs J=0$,}\\
        2, & \text{if $\abs J=1$,}\\
        0, & \text{if $\abs J>1$,}
    \end{cases}
    &
    \tilde v_{\set1\cup J} &= \begin{cases}
        Q(m-6), & \text{if $\abs J=0$,}\\
        12m-72, & \text{if $\abs J=1$,}\\
        24, & \text{if $\abs J=2$,}\\
        0, & \text{if $\abs J=3$,}
    \end{cases}\\
    \tilde v_{\set5} &= 2, & \tilde v_{I} &= 0\quad\text{for all other $I\ni 5$},\\
    p_{\set5} &= 72, & p_{\set{1,2,3}} &= p_{\set{1,2,3}} = p_{\set{1,3,4}} = 2,\\
    p_{I} &= 0\quad\text{for all other $I$.}\span\omit
\end{align*}
Note that most of the $p_I$'s have vanished due to erasure of irrelevant points at this stage. This $E_1$ is both an $(m-2,n-\ell(m))$-inductant of $D_1$ and due to Remark~\ref{rmrk:commuting-inductants} also an $(m,n-2)$-inductant of $E_0$.
}

\famitem F0{
For any $(m,n)$ with $m\geq10$, $n\geq Q(m)+7$, we define $F_0(m,n)$ with $k=5$ coordinate subvarieties through ($J$ being a subset of $\set{2,3,4,5}$):
\begin{align*}
    \tilde u_I &= \begin{cases}
        m-9, & \text{if $\abs I = 0$,}\\
        2, & \text{if $\abs I = 1$,}\\
        0, & \text{if $\abs I > 1$,}
    \end{cases}\span\omit\\
    \tilde v_J &= \begin{cases}
        n-7-Q(m), & \text{if $\abs J=0$,}\\
        2, & \text{if $\abs J=1$,}\\
        0, & \text{if $\abs J>1$,}
    \end{cases}
    &
    \tilde v_{\set1\cup J} &= \begin{cases}
        Q(m-8), & \text{if $\abs J=0$,}\\
        12m-96, & \text{if $\abs J=1$,}\\
        24, & \text{if $\abs J=2$,}\\
        0, & \text{if $\abs J>2$,}
    \end{cases}\\
    p_J &= \begin{cases}
        0, & \text{if $\abs J=0$,}\\
        72, & \text{if $\abs J=1$,}\\
        36m-228, & \text{if $\abs J=2$,}\\
        0, & \text{if $\abs J>2$,}
    \end{cases}
    &
    p_{\set1\cup J} &= 0.
\end{align*}
This is an $(m-2,n-\ell(m))$-inductant of $E_0$.
}

\famitem G0{
For any $(m,n)$ with $m\geq12$, $n\geq Q(m)+9$, we define $G_0(m,n)$ with $k=6$ coordinate subvarieties through ($J$ being a subset of $\set{2,\dots,6}$):
\begin{align*}
    \tilde u_I &= \begin{cases}
        m-11, & \text{if $\abs I = 0$,}\\
        2, & \text{if $\abs I = 1$,}\\
        0, & \text{if $\abs I > 1$,}
    \end{cases}\span\omit\\
    \tilde v_J &= \begin{cases}
        n-9-Q(m), & \text{if $\abs J=0$,}\\
        2, & \text{if $\abs J=1$,}\\
        0, & \text{if $\abs J>1$,}
    \end{cases}
    &
    \tilde v_{\set1\cup J} &= \begin{cases}
        Q(m-10), & \text{if $\abs J=0$,}\\
        12m-120, & \text{if $\abs J=1$,}\\
        24, & \text{if $\abs J=2$,}\\
        0, & \text{if $\abs J>2$,}
    \end{cases}\\
    p_J &= \begin{cases}
        72, & \text{if $\abs J=2$,}\\
        0, & \text{otherwise,}
    \end{cases}
    &
    p_{\set1\cup J} &= 0.
\end{align*}
This is an $(m-2,n-\ell(m))$-inductant of $F_0$.
}
\end{itemize}
Let us also briefly comment on these configurations in relation to the conditions \eqref{eq:coordsubvar-nonempty} and \eqref{eq:coordconfig-sanity}. Firstly, in all of the configurations listed above, our bounds ensure $\tilde u_{\emptyset}>0$. Since $\emptyset$ contains no elements and is disjoint from any set, $\tilde u_\emptyset$ is always included in the sums in question, so the desired inequalities hold.  For the conditions on $\tilde v_I$, the situation is as follows:
\begin{itemize}
    \item For $B_0$, $C_0$, $D_0$, the bound on $n$ ensures that $v_\emptyset$ is non-zero, so the same reasoning as above applies.
    \item For all of the other configurations, notice that:
    \begin{enumerate}[label={(\roman*)}]
    \item They have at least two subvarieties.
    \item The parameter $p_{\set{1,\dots,k}}$ is $0$. This means no double points are constrained to the intersection of all the subvarieties.
    \item The values at singletons $\tilde v_{\set i}$ are all positive. This is because they are all either constant $2$ or some value $Q(m')$ for an $m'\geq2$.
    \end{enumerate}
    We then notice that \eqref{eq:coordsubvar-nonempty} holds because the sum for any $t$ includes some singleton $\tilde v_{\set i}$, $i\neq t$, because $k\geq2$. Similarly, \eqref{eq:coordconfig-sanity} holds, because any $I$ with $p_I>0$ has an element $j\notin I$, so then $\tilde v_{\set j}>0$ is included in the sum, ensuring its positivity.
\end{itemize}

\begin{remark}
Let us comment on the choice of the polynomials $Q$ and $\ell$ for the step sizes in our inductive strategy. First, $Q(m)$ is chosen with the motivation that $\vdim B_0(m,n) = \vdim A_0(m,n) - \vdim A_0(m-2, n-Q(m))$ be non-positive, which is necessary for applying Lemma~\ref{lem:inductant-induction}, but preferably with a small absolute value. Since we observed our $\vdim B_0$ to be constantly $-1$, one could hope to decrease $Q(m)$ by one to $3m^2-6m+1$, making $B_0$ equiabundant, with the goal of slightly enlarging the set of $(m,n)$ where we prove non-defectivity. However, $3m^2-6m+1$ is always odd, hence for even $m$, we would end up connecting nice cases with ugly ones, breaking the induction, because the ugly cases of $A_0(m,n)$ have a higher virtual dimension in terms of $n$ compared to nearby nice cases. For the sake of keeping the presentation simpler, we opted to sacrifice the minute improvements to be had in constructing separate inductions for even and odd $m$'s in the nice case, leading to our choice of $Q(m) = 3m^2-6m+2$.

In the subsequent steps, the choice of $\ell(m)$ stems from keeping the $\tilde v$ parameters non-negative. Observe that $\tilde v_{\set2}$ in $C_0(m,n)$ becomes a constant $2$. Interestingly, we could decrease $\ell(m)$ to $12m-24$, resulting in $\tilde v_{\set2}=0$ still being non-negative. However, testing the resulting configurations computationally, we found them to be probably defective via Algorithm~\ref{alg:coordconfig} (see Section~\ref{sec:computations}). We do not have a satisfying explanation for this defectivity. After this, we tried increasing the constant term by two (to preserve parity), leading to our choice of $\ell(m)=12m-22$, which yields a successful inductive strategy. Again, some minor improvements could perhaps be extracted by choosing tighter steps sizes in some of the further parts of Figure~\ref{fig:nicestrategy}, but we opted instead to minimize, via Remark~\ref{rmrk:commuting-inductants}, the number of unique families we consider.
\end{remark}

To prove non-defectiveness on large sets of $(m,n)$ of the families we just defined, we will use the following base cases:
\begin{prop}
    \label{prop:nicecertificates}
    The following are non-defective and superabundant (or even equiabundant):
    \begin{gather*}
        A_0(3,8),\quad A_0(3,9),\quad A_0(4,26),\\
        B_0(2,4),\quad B_0(3,12),\quad B_0(3,13),\quad B_1(2,4),\quad B_1(3,14),\quad B_2(2,5),\\
        C_0(4,30),\quad C_0(5,50),\quad C_0(5,51),\quad C_1(4,30),\quad C_1(5,52),\\
        D_0(6,82),\quad D_0(7,111),\quad D_0(7,112),\quad D_1(6,80),\quad D_1(7,112),\\
        E_0(8,152),\quad E_0(9,196),\quad E_0(9,197),\quad E_1(8,154),\\
        F_0(10,250),\quad F_0(11,306),\quad G_0(12,372).
    \end{gather*}
\end{prop}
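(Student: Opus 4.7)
The strategy is to verify each configuration in the list by an explicit finite linear-algebra computation, which is carried out by computer in Section~\ref{sec:computations} with source code and certificates available at \cite{gitrepo}. Abundancy is immediate from the formulas: for each listed configuration, $\vdim Z$ is determined by $\set{p_I}$, $\set{\tilde u_I}$, $\set{\tilde v_I}$ via Definition~\ref{def:virtdim}, and a direct substitution confirms $\vdim Z\leq0$ (superabundant) or $\vdim Z=0$ (equiabundant) in each case, as claimed. The substantive content is therefore the non-defectivity assertion $\dim\mathcal I_Z(1,2)=\max\set{0,\vdim Z}$ for each listed instance.

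For each specific $Z$, computing $\dim\mathcal I_Z(1,2)$ reduces to kernel-finding for an explicit linear map. One starts with the subspace $\mathcal I_L(1,2)\leq R(1,2)$, whose monomial basis is read off immediately from the $\tilde u_I$, $\tilde v_I$ (since each $\mathcal I_{L_t}$ is a coordinate monomial ideal and $\mathcal I_L = \bigcap_t \mathcal I_{L_t}$), and then imposes the conditions $\partial f/\partial z(P)=0$ for every variable $z\in\set{x_0,\dots,x_m,y_0,\dots,y_n}$ and every chosen double point $P$. By upper semicontinuity of kernel dimensions in the point coordinates, it is enough to exhibit, for each listed $Z$, a single choice of points achieving $\dim\mathcal I_Z(1,2)=\max\set{0,\vdim Z}$. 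In practice we sample the point coordinates at random over a large finite field $\ef_p$: an integer matrix attains rank at least that of its reduction modulo $p$, so a computation over $\ef_p$ yielding the correct rank certifies the same equality over $\ce$.

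The principal obstacle is computational scale. The largest case, $G_0(12,372)$, lives in $\pe^{12,372}$ with bidegree $(1,2)$ ambient space of dimension $13\cdot\binom{374}2=906763$, and the constraint matrix has a comparable number of rows. Dense Gaussian elimination at this size is infeasible, so the implementation relies on sparse linear algebra over $\ef_p$ — each single-variable derivative row is supported only on monomials containing that variable — together with the reduction to $\mathcal I_L(1,2)$ described above, which already eliminates many coefficients for free. The detailed execution, as well as tactics for keeping intermediate matrices manageable, are given in Section~\ref{sec:computations}, and successful termination in every listed case completes the proof.
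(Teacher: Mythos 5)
Your proof takes the same approach as the paper: the proposition is a list of base cases verified by a Monte Carlo linear-algebra computation over a finite field, and the paper's proof is a one-line pointer to Section~\ref{sec:computations}. Your remarks on semicontinuity and on the rank-over-$\F_p$ lower bound match the paper's justification.

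However, some of your implementation claims are inaccurate or contradicted by the paper. The paper's matrix in Algorithm~\ref{alg:coordconfig} is not of size comparable to $(m+1)\binom{n+2}{2}$: its rows are indexed by the monomial basis of $\mathcal I_L(1,2)$ (not all bidegree-$(1,2)$ monomials), and its columns by pairs $(P,z)$ with $z$ ranging only over the variables $W_I$ that vanish on every $L_t$, $t\in I$, i.e.\ exactly $u_I+v_I$ variables per point. For $G_0(12,372)$ there are six coordinate subvarieties, so $\mathcal I_L(1,2)$ is far smaller than the $906763$ you estimate, and all double points have become constrained with small $u_I+v_I$; consequently $G_0(12,372)$ is not the bottleneck. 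The paper reports that the largest computation is $E_0(9,197)$, with a $67\,752\times67\,824$ matrix, and that it uses dense linear algebra via FFLAS-FFPACK (together with Eigen and OpenBLAS), not sparse solvers. So the feasibility rests on the shrinkage built into the coordinate-configuration formalism, not on sparsity.
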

\begin{proof}
    Verified computationally (see Section~\ref{sec:computations}).
\end{proof}

To prove results about these families inductively, we essentially have to proceed in an order opposite to that of their construction. I.e. in terms of the diagram in Figure~\ref{fig:nicestrategy}, we start from the right- and topmost nodes and work our way to the left and to the bottom. The proofs essentially consist only of verifying the inductant constructions, observing that some further inductants become trivial and managing the bounds on $m$, $n$. Therefore, we shall be brief in latter propositions where the same ideas are repeated.

\begin{prop}
    \label{prop:G0}
    For any $(m,n)$ with $m\geq 12$, $n\geq 3m^2-5m$, the configuration $G_0(m,n)$ is non-defective.
\end{prop}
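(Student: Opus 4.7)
The plan is to apply Lemma~\ref{lem:inductant-induction} to further inductants of $G_0$ that turn out to be \emph{trivially} non-defective, in the spirit of the comment \uv{observing that some further inductants become trivial} made right before this proposition. The proof will run a double induction in $m$ (in steps of $2$) and $n$ (in steps of $2$), with the base case $G_0(12,372)$ supplied by Proposition~\ref{prop:nicecertificates}.

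For the step in $m$, I would construct an $(m-2,n-\ell(m))$-inductant of $G_0$ by adjoining a seventh coordinate subvariety $L_7$ whose parameters are dictated by \eqref{eq:inductant-construction}. The key claim is that after this step the ideal of the union $L_1\cup\cdots\cup L_7$ already vanishes in bidegree $(1,2)$; this forces the full ideal of the inductant configuration to vanish, making it trivially non-defective. Granting this, Lemma~\ref{lem:inductant-induction} reduces non-defectivity of $G_0(m,n)$ to that of $G_0(m-2,n-\ell(m))$. The bound is preserved under the step: $n\ge 3m^2-5m$ gives $n-\ell(m)=n-12m+22\ge 3(m-2)^2-5(m-2)$, so the inductive hypothesis on $G_0(m-2,\cdot)$ applies exactly in the intended range.

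For the step in $n$ at fixed $m$, which is what carries us from the finite list of certificates to all $n\ge 3m^2-5m$, I would construct an $(m,n-2)$-inductant of $G_0$ by adjoining a subvariety cut out by two additional $y$-equations and again argue that the resulting enlarged coordinate configuration is trivially non-defective. Combined with the $m$-step, this reduces every admissible $(m,n)$ to the base case $G_0(12,372)$; the parity-mismatch issue at odd $m$ (where naively $m-2$ exits the defined range of $G_0$) is handled either by an additional certificate at $m=13$ obtained in the same computational manner as Proposition~\ref{prop:nicecertificates}, or by a parity-crossing inductant that routes through one of the families $F_0,\ldots,B_0$ already under control.

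The main obstacle I expect is the combinatorial verification of the triviality claim: one must show that after adjoining the appropriate seventh (or extra $n$-step) subvariety, every bidegree $(1,2)$ monomial $x_i y_j y_k$ has at least one of $x_i,y_j,y_k$ appearing among the defining coordinate equations of some $L_t$. This reduces to a finite but intricate case analysis tracking how coordinates distribute among the classes $\tilde u_I,\tilde v_I$ after the adjunction, relying crucially on the bounds $m\ge 12$ and $n\ge 3m^2-5m$ to ensure that no monomial escapes coverage by the union of the seven subvarieties.
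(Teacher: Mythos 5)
Your core idea is the right one: show that the extra inductants of $G_0$ are \emph{trivially} non-defective (ideal of the union of subvarieties already vanishes in bidegree $(1,2)$, all points become irrelevant), then chain through Lemma~\ref{lem:inductant-induction}. But the step sizes you chose, $(m-2,n-\ell(m))$ and $(m,n-2)$, create the parity obstruction you yourself flag, and neither of your proposed fixes actually closes it within the paper's framework. Proposition~\ref{prop:nicecertificates} supplies only $G_0(12,372)$, so ``an additional certificate at $m=13$'' is not available; and ``routing through $F_0,\ldots,B_0$'' is backwards, since in the paper those families are proved non-defective \emph{from} Proposition~\ref{prop:G0}, not before it — $G_0$ sits at the top of the chain in Figure~\ref{fig:nicestrategy}, so nothing is ``already under control'' when you prove it.

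The paper's proof avoids all of this by using \emph{unit} steps: the $(m,n-1)$-inductant (so $L_7$ carries a single $y$-coordinate, unshared, forcing one $y$-factor and leaving $x_i$ plus one remaining $y_j$ to cover $L_1,\dots,L_6$, impossible) and the $(m-1,n-6m+9)$-inductant. The second choice exploits the identity $3m^2-5m + (6m-9) = 3(m+1)^2-5(m+1)$, so the bound $n\geq 3m^2-5m$ descends exactly, and a unit step in $m$ needs no second base case. Note also that the bounds $m\geq 12$, $n\geq 3m^2-5m$ are used only to guarantee $\tilde u_\emptyset,\tilde v_\emptyset\geq 0$ so the inductant configurations are well-defined; the monomial-coverage argument is purely combinatorial on the Venn-diagram structure of $\tilde u_I,\tilde v_I$ (each $x_i$ vanishes on at most one subvariety, each $y_j$ on at most two of $L_2,\dots,L_7$) and does not ``rely crucially'' on the numeric bounds as you suggest.
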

\begin{proof}
    First we prove that non-defectivity of $G_0(m,n-1)$ implies non-defectivity of $G_0(m,n)$. By Lemma~\ref{lem:inductant-induction}, we only need to prove non-defectivity (and the correct abundancy) of the $(m,n-1)$-inductant of $G_0$, let us call it $S$, and let $L_7$ be the new subvariety added in the inductant construction. Note that of all sets $I\ni 7$, only $I=\set 7$ will have non-zero $\tilde v_I$, namely $\tilde v_{\set7}=1$.
    Thus since the numbers of double points in $G_0$ were already constant and already constrained to some subvariety, all points will become irrelevant, so $S(m,n)$ has no points.

    Let us argue that no bidegree $(1,2)$ monomial vanishes on all $7$ coordinate subvarieties of $S$. If $L_7$ is the newly added subvariety, only one $y_j$ vanishes on it, so any monomial vanishing on all seven subvarieties must be divisible by it.
    Each $x_i$ vanishes on at most one of $L_1,\dots,L_6$ and never on $L_7$. Lastly, any $y_j$ vanishes on at most two of $L_2,\dots,L_6$ (and possibly also on $L_1$), hence no $x_iy_{j_1}y_{j_2}$ can vanish on all seven of the subvarieties. Therefore we see that both $\vdim S(m,n)=0$ and also always $\mathcal I_{S(m,n)} = 0$. Hence $S(m,n)$ is equiabundant and non-defective. From this, Lemma~\ref{lem:inductant-induction} gives us the inductive step
    \[
        \text{$G_0(m,n-1)$ non-defective} \implies \text{$G_0(m,n)$ non-defective}.
    \]

    Second, we prove that non-defectivity of $G_0(m-1,n-6m+9)$ implies non-defectivity of $G_0(m,n)$. Again we consider the corresponding inductant, let us call it $S'$, and again, we wish to prove that it has no points and no bidegree $(1,2)$ monomial vanish on all of its subvarieties simultaneously. The points all becoming irrelevant in the inductant construction follows for the exact same reasons as with $S$.

    For the monomials, again, each $x_i$ vanishes on at most one of $L_1,\dots,L_7$. Applying the relations \eqref{eq:inductant-construction}, we also see that each $y_j$ may vanish on at most two of $L_2,\dots,L_7$ (and also additionally on $L_1$), meaning a bidegree $(1,2)$ monomial $x_iy_{j_1}y_{j_2}$ can vanish on all six $L_2,\dots, L_7$, and thus much less on all seven $L_1,\dots,L_7$. Therefore the corresponding ideal vanishes, so $S'(m,n)$ is indeed equiabundant and non-defective, yielding the inductive step
    \[
        \text{$G_0(m-1,n-6m+9)$ non-defective} \implies \text{$G_0(m,n)$ non-defective}
    \]
    by Lemma~\ref{lem:inductant-induction} once more.

    Finally, we observe that we have non-defectivity of $G_0(12,372)$ by Proposition~\ref{prop:nicecertificates} and that $372 = 3\cdot 12^2-5\cdot12$. Further, $3m^2-5m + (6m-9) = 3(m+1)^2-5(m+1)$, so combining the two inductions above, the full proposition follows from non-defectivity of $G_0(12,372)$.
\end{proof}

\begin{prop}
    \label{prop:F0}
    For any $(m,n)$ with $m\geq 10$, $n\geq 3m^2-5m$, the configuration $F_0(m,n)$ is equiabundant and non-defective.
\end{prop}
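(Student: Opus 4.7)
The plan is to mirror the strategy used for $G_0$ in Proposition~\ref{prop:G0}: prove non-defectivity of $F_0(m,n)$ by two inductive steps, one reducing $n$ by $1$ and one reducing $m$ by $2$ (via $\ell(m)$), and finish with the base cases $F_0(10,250)$ and $F_0(11,306)$ from Proposition~\ref{prop:nicecertificates}. Equiabundance of $F_0$ follows automatically from Lemma~\ref{lem:defect-additivity} and the already-established equiabundance of $E_0$, so the real content is non-defectivity.

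For the $n$-step I would construct the $(m,n-1)$-inductant $S$ of $F_0$ and show it is trivially non-defective. In $F_0$ the functions $\tilde u_I$ and $p_I$ do not depend on $n$, and only $\tilde v_\emptyset$ does; so in $S$, the new subvariety $L_6$ gets $\tilde v_{\set 6}=1$ and $\tilde v_I=\tilde u_I=0$ for every other $I\ni 6$. By~\eqref{eq:inductant-construction}, all double points specialized onto $L_6$ become irrelevant and are erased, leaving $S(m,n)$ with no points. I then have to verify that $\mathcal I_{L_1\cup\cdots\cup L_6}(1,2)=0$; this is the main technical point of the argument, though still elementary. Any bidegree $(1,2)$ monomial $x_iy_{j_1}y_{j_2}$ vanishing on $L_6$ must contain a $y$ from the $\tilde v_\emptyset$ slot, which vanishes on none of $L_1,\dots,L_5$. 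Meanwhile $x_i$ covers at most one of $L_1,\dots,L_5$ (since $\tilde u_I=0$ for $\abs I>1$), and the remaining $y$ covers at most three, the maximum being attained only by a $y$ drawn from some $\tilde v_{\set{1,j_a,j_b}}$ (which vanishes on $L_1$ and on two of $L_2,\dots,L_5$, because $\tilde v_{\set{1,j_1,j_2,j_3}}=0$). Since $1+3<5$, no monomial works, so $S(m,n)$ is equiabundant and non-defective, and Lemma~\ref{lem:inductant-induction} delivers the step $F_0(m,n-1)\Rightarrow F_0(m,n)$.

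For the $m$-step I would directly invoke the inductant $G_0\to F_0$ of Figure~\ref{fig:nicestrategy}: by construction $G_0$ is the $(m-2,n-\ell(m))$-inductant of $F_0$, and Proposition~\ref{prop:G0} guarantees $G_0(m,n)$ non-defective for $m\geq 12$ and $n\geq 3m^2-5m$, so Lemma~\ref{lem:inductant-induction} yields $F_0(m-2,n-\ell(m))\Rightarrow F_0(m,n)$ in this range. The arithmetic matches: $n-\ell(m)\geq 3(m-2)^2-5(m-2)$ is equivalent to $n\geq 3m^2-5m$, and one also checks $F_0(m-2,n-\ell(m))$ lies within the domain of definition. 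Combining both steps with the base cases covers $F_0(10,n)$ for all $n\geq 250=3\cdot 10^2-5\cdot 10$ and $F_0(11,n)$ for all $n\geq 306$ (in particular $n\geq 308=3\cdot 11^2-5\cdot 11$), and then propagates non-defectivity to every $m\geq 10$ in the claimed range. The only place where something could conceivably go wrong is the combinatorial ideal-vanishing check in the $n$-step, but the specific structure of $F_0$'s parameters makes this a short counting argument.
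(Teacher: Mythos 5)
Your proposal is correct and follows the same two-step induction as the paper: an $(m,n-1)$-inductant whose ideal and points become trivial, together with the $(m-2,n-\ell(m))$-inductant $G_0$ handled by Proposition~\ref{prop:G0}, anchored by the base cases $F_0(10,250)$ and $F_0(11,306)$. Your counting argument for the $n$-step (the sole variable cutting $L_6$ vanishes only on $L_6$; an $x_i$ covers at most one of $L_1,\dots,L_5$; the remaining $y_j$ covers at most three, so $1+3<5$) is exactly the paper's, and indeed your bookkeeping is slightly more careful than the paper's, which has a small typo referring to $L_7$ and ``7 subvarieties'' where $L_6$ and $6$ are meant.
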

\begin{proof}
    Let $S$ be the $(m,n-1)$ inductant of $F_0$. By same reasoning as in the proof of the previous proposition, let us see that all points become irrelevant in this inductant: the functions $p_I$ in $F_0$ were constant with respect to $n$. Let us also see that the ideal of the subvarieties vanishes in bidegree $(1,2)$. There are $7$ subvarieties, $L_7$ requires its sole variable $y_j$ that is shared by no other. Any $x_i$ can vanish on at most one subvariety and among $L_1,\dots,L_6$, any $y_j$ vanishes at most on three. Hence vanishing on all $7$ cannot be achieved. This equiabundancy and non-defectivity gives an induction
    \[
        \text{$F_0(m,n-1)$ non-defective} \implies \text{$F_0(m,n)$ non-defective}.
    \]
    Further, we know that $G_0$, which is the $(m-2,n-\ell(m))$-inductant of $F_0$, is non-defective by Proposition~\ref{prop:G0}. This gives the induction
    \[
        \text{$F_0(m-2,n-\ell(m))$ non-defective} \implies \text{$F_0(m,n)$ non-defective}
    \]
    for $m\geq 12$, $n\geq 3m^2-5m$. Note that $3m^2-5m$ evaluated at $10$ and $11$ is $250$ and $308$ respectively. By Proposition~\ref{prop:nicecertificates}, we know non-defectivity of $F_0(10,250)$ and $F_0(11,306)$. Through the inductions above, we perform steps $F_0(11,306)\implies F_0(11,307) \implies F_0(11,308)$. Then $F_0(10,250)$ and $F_0(11,308)$ prove non-defectivity of all $F_0(m,3m^2-5m)$, $m\geq10$, because $12m-22$ is the difference $3m^2-5m - 3(m-2)^2+5(m-2)$, and finally, $F_0(m,n-1)\implies F_0(m,n)$ extends this to all $F_0(m,n)$, $m\geq10$, $n\geq 3m^2-5m$.
\end{proof}

\begin{prop}
    \label{prop:E1}
    For any $(m,n)$ with $m\geq 8$, $n\geq 3m^2-5m+2$, the configuration $E_1(m,n)$ is equiabundant and non-defective.
\end{prop}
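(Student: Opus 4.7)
The proof mirrors Propositions~\ref{prop:G0} and \ref{prop:F0}: construct several inductants of $E_1$, verify each is trivially non-defective, then chain them from the base case of Proposition~\ref{prop:nicecertificates}. Unlike the situation in those propositions, however, the underlying $E_1$ configuration does \emph{not} have vanishing bidegree $(1,2)$ ideal on its five subvarieties -- e.g.\ a monomial $x_i y_{j_1} y_{j_2}$ with $x_i$ in the $x$-cell $\set{4}$, $y_{j_1}$ in the $y$-cell $\set{1,2,3}$ and $y_{j_2}$ in the $y$-cell $\set{5}$ already vanishes on all of $L_1,\dots,L_5$. Hence the combinatorial analysis has to rely genuinely on the sixth subvariety added by each inductant to kill such monomials. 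This is the main technical obstacle.

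My plan uses three inductants of $E_1$, each equiabundant and non-defective by essentially the same mechanism. First, the $(m,n-1)$-inductant $S$: because every $p_I$ of $E_1$ is constant in $n$, all specialized points of $S$ become irrelevant, and the new subvariety $L_6$ carries only a single fresh $y$-equation (from the difference in $\tilde v_\emptyset$). A case analysis on which cells the $x$- and $y$-factors of a monomial can occupy -- noting that $L_5$ is covered only by a $y$ in cell $\set{5}$, which then covers nothing else, and no single cell covers four of $L_1,\dots,L_4$ -- gives $\mathcal I_S(1,2)=0$, and Lemma~\ref{lem:inductant-induction} yields $E_1(m,n-1)\Rightarrow E_1(m,n)$.

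Second, the $(m-2,n-\ell(m))$-inductant $T$ of $E_1$. Here I would invoke Remark~\ref{rmrk:commuting-inductants}: since $E_1$ is the $(m,n-2)$-inductant of $E_0$, $F_0$ is the $(m-2,n-\ell(m))$-inductant of $E_0$, and these two pairs of linear functions commute, $T$ coincides with the $(m,n-2)$-inductant of $F_0$. The $p_I$ of $F_0$ are also constant in $n$, so $T$ has no points, and an analogous cell analysis (now tracking the cells of $F_0$, including $\set{1,6}$, $\set{1,i,6}$) yields $\mathcal I_T(1,2)=0$. Third, the $(m-1,n-6m+9)$-inductant $S'$ of $E_1$, in the spirit of the second step of Proposition~\ref{prop:G0}: the new $L_6$ adds a single $x$-equation (and shares several $y$-cells with $L_1$), all points become irrelevant, and $\mathcal I_{S'}(1,2)=0$ again by case analysis.

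I would then close the induction from the base case $E_1(8,154)$. Step~1 gives $E_1(8,n)$ for $n\geq 154$. Step~2, using the identity $\ell(m)=(3m^2-5m+2)-(3(m-2)^2-5(m-2)+2)$, lifts $m$ by $2$ along the curve $n=3m^2-5m+2$ and covers all even $m\geq 8$; Step~1 then extends to larger $n$. Step~3 bridges even to odd $m$: one checks $n-6m+9\geq 3(m-1)^2-5(m-1)+2$ whenever $n\geq 3m^2-5m+2$, so $E_1(m-1,n-6m+9)$ falls in the already established even range. Together these give non-defectivity and equiabundancy of $E_1(m,n)$ for all $m\geq 8$, $n\geq 3m^2-5m+2$.
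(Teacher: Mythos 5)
Your proposal is correct and the key induction steps (the $(m,n-1)$-inductant and the $(m-1,n-6m+9)$-inductant, shown to be trivially equiabundant and non-defective by a cell-vanishing analysis, chained from the base case $E_1(8,154)$) match the paper's proof. However, your intermediate Step~2, the $(m-2,n-\ell(m))$-inductant, is redundant: the $(m-1,n-6m+9)$-inductant in Step~3 advances $m$ by one regardless of parity, so it alone (together with Step~1) carries the induction, exactly as in the paper. Including Step~2 adds a genuine proof burden (you must separately verify that the $(m,n-2)$-inductant of $F_0$ has vanishing bidegree $(1,2)$ ideal, which is not the same as the $(m,n-1)$-inductant analyzed in the proof of Proposition~\ref{prop:F0}) without buying anything in return. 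Also, your opening concern that $E_1$ itself does not have vanishing $\mathcal I_L(1,2)$ is not a new obstacle: the same is true of $G_0$ and $F_0$ in Propositions~\ref{prop:G0} and~\ref{prop:F0}, where, just as here, only the inductants are shown to have trivial ideal and no points. There is no qualitative difference in the combinatorics at the $E_1$ level.
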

\begin{proof}
    We again establish two kinds of induction steps through two inductants that we show become trivial. First, let $S$ be the $(m,n-1)$-inductant of $E_1$. Then all points become irrelevant, since all $p_I$ were already constant in $E_1$. Further, no bidegree $(1,2)$ monomial vanish on all $6$ subvarieties of $S$, since two different $y_j$'s are needed to vanish on the two subvarieties coming from $(m,n-1)$ and $(m,n-2)$ inductant constructions, and subsequently any variable $x_i$ vanishes  at most on one of the remaining four subvarieties. This establishes the induction
    \[
        \text{$E_1(m,n-1)$ non-defective} \implies \text{$E_1(m,n)$ non-defective}.
    \]

    Then, let $S'$ be the $(m-1,n-6m+9)$-inductant of $E_1$. The points again become irrelevant, because all $p_I$ were constant in $E_1$. To see that no bidegree $(1,2)$ monomials vanish on all $7$ subvarieties of $S'$, we argue as follows. One subvariety comes from the $(m,n-2)$-inductant construction that created $E_1$ from $E_0$, this only has two $y_j$ equations that are shared by none other. Any $x_i$ only vanishes on at most one of the subvarieties and any $y_j$ vanishes on at most three. So altogether vanishing on all $7$ cannot be achieved. This establishes
    \[
        \text{$E_1(m-1,n-6m+9)$ non-defective} \implies \text{$E_1(m,n)$ non-defective}.
    \]

    Then starting from non-defectivity of $E_1(8,154)$ which we have by Proposition~\ref{prop:nicecertificates}, the above two induction extend it to all $E_1(m,n)$ for $m\geq8$, $n\geq 3m^2-5m+2$, since $3m^2-5m+2$ evaluates to $154$ at $m=8$.
\end{proof}

\begin{prop}
    \label{prop:E0}
    For any nice $(m,n)$ with $m\geq 8$, $n\geq 3m^2-5m$, the configuration $E_0(m,n)$ is equiabundant and non-defective.
\end{prop}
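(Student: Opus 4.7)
The plan is to mirror the two-step inductive strategy of Propositions~\ref{prop:G0}, \ref{prop:F0}, and \ref{prop:E1}, using the two families having $E_0$ as their parent in the inductant hierarchy depicted in Figure~\ref{fig:nicestrategy}: the family $E_1$, which is the $(m, n-2)$-inductant of $E_0$, and the family $F_0$, which is the $(m-2, n-\ell(m))$-inductant of $E_0$. Both of these families have already been shown to be non-defective in Propositions~\ref{prop:E1} and~\ref{prop:F0}, so Lemma~\ref{lem:inductant-induction} will supply two kinds of inductive implications for $E_0$, which I would then combine with the base cases recorded in Proposition~\ref{prop:nicecertificates}.

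The first implication, obtained by taking $B = E_1$, $A = E_0$ and $(M, N) = (m, n-2)$ in Lemma~\ref{lem:inductant-induction}, reads: if $E_0(m, n-2)$ is non-defective and $E_1(m, n)$ is non-defective, then so is $E_0(m, n)$. This step preserves the parity of $n$ and hence the nice classification. The second implication, obtained analogously with $B = F_0$ and $(M, N) = (m-2, n - \ell(m))$, reads: if $E_0(m-2, n-\ell(m))$ is non-defective and $F_0(m, n)$ is non-defective, then so is $E_0(m, n)$. The key identity $3m^2 - 5m - \ell(m) = 3(m-2)^2 - 5(m-2)$ ensures the claimed lower bound on $n$ translates correctly through this $(m-2, n-\ell(m))$-step, and since $\ell(m) = 12m - 22$ is even, both parities are preserved. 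Equiabundancy of $E_0(m, n)$ follows in both cases directly from Lemma~\ref{lem:defect-additivity}.

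It then remains to assemble these two implications with the base cases $E_0(8, 152)$, $E_0(9, 196)$, $E_0(9, 197)$. For $m = 8$, the nice condition forces $n$ to be even, and iterating the first implication from $E_0(8, 152)$ extends non-defectivity to all even $n \geq 152$. For $m = 9$, both parities of $n$ are permitted, so the two base cases $E_0(9, 196), E_0(9, 197)$ seed iterations of the first implication in each parity. For $m \geq 10$, the second implication reduces the case $(m, n)$ to $(m-2, n - \ell(m))$, completing the induction on $m$ in steps of two down to $m \in \{8, 9\}$. The main obstacle I anticipate is carefully reconciling the slightly more conservative bound $n \geq 3m^2 - 5m + 2$ from Proposition~\ref{prop:E1} with the bound $n \geq 3m^2 - 5m$ claimed here, so that the very first few steps of the first implication for $m \in \{8, 9\}$ have their required instances of $E_1(m, n)$ non-defective; this is likely to involve an explicit seeding step analogous to the $F_0(11,306)\implies F_0(11,307)\implies F_0(11,308)$ detour used at the end of the proof of Proposition~\ref{prop:F0}.
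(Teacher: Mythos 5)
Your proposal matches the paper's approach: two induction steps supplied by Lemma~\ref{lem:inductant-induction} via the already-established non-defectivity of $E_1$ (Proposition~\ref{prop:E1}) and $F_0$ (Proposition~\ref{prop:F0}), seeded by the certified base cases $E_0(8,152)$, $E_0(9,196)$, $E_0(9,197)$. The bound reconciliation you flag is indeed handled with an explicit seeding step in the paper, which first uses the $(m,n-2)$ implication to promote $E_0(9,196)$, $E_0(9,197)$ to $E_0(9,198)$, $E_0(9,199)$ (noting $3m^2-5m$ evaluates to $152$ and $198$ at $m=8,9$) before launching the general double induction --- exactly the detour you anticipated, and your caution is warranted since that bridge sits right at the edge of the range stated in Proposition~\ref{prop:E1}.
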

\begin{proof}
    By Propositions~\ref{prop:E1} and \ref{prop:F0}, we have induction steps
    \begin{align*}
        \text{$E_0(m,n-2)$ non-defective} &\implies \text{$E_0(m,n)$ non-defective}\quad\text{(for $m\geq8$, $n\geq 3m^2-5m+2$),}\\
        \text{$E_0(m-2,n-\ell(m))$ non-defective} &\implies \text{$E_0(m,n)$ non-defective}\quad\text{(for $m\geq 10$, $n\geq3m^2-5m$)},
    \end{align*}
    for nice $(m,n)$.
    By Proposition~\ref{prop:nicecertificates}, we have non-defectivity of $E_0(8,152)$, $E_0(9,196)$ and $E_0(197)$. Note that $3m^2-5m$ evaluated at $m=8$ and $m=9$ gives $152$ and $198$ respectively. Hence we use the $E_0(m,n-2)\implies E_0(m,n)$ induction step to first obtain non-defectivity of $E_0(9,198)$ and $E_0(9,199)$, and then the two induction steps extend from $E_0(8,152)$, $E_0(9,198)$, $E_0(9,199)$ to all nice $(m,n)$ with $m\geq 8$, $n\geq 3m^2-5m$.
\end{proof}

\begin{prop}
    \label{prop:D1}
    For any $(m,n)$ with $m\geq 6$, $n\geq 3m^2-5m+2$, the configuration $D_1(m,n)$ is equiabundant and non-defective.
\end{prop}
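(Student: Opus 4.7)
The plan is to mirror the two-step inductive structure of Propositions~\ref{prop:E1}--\ref{prop:G0}: combine an $(m-2, n-\ell(m))$-inductant of $D_1$ with an $(m, n-1)$-inductant of $D_1$ and launch the induction from the base cases in Proposition~\ref{prop:nicecertificates}. The first of these inductants is essentially already available, since $E_1$ was constructed as an $(m-2, n-\ell(m))$-inductant of $D_1$. Combining Proposition~\ref{prop:E1} with Lemma~\ref{lem:inductant-induction} therefore yields the step
\[
    D_1(m-2, n-\ell(m)) \text{ non-defective} \implies D_1(m,n) \text{ non-defective},
\]
and a short check of the identity $3(m-2)^2 - 5(m-2) + 2 + \ell(m) = 3m^2 - 5m + 2$ confirms that the bound $n \geq 3m^2 - 5m + 2$ on $(m,n)$ matches the same bound on $(m-2, n-\ell(m))$.

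The heart of the argument is the $(m, n-1)$-inductant $S$ of $D_1$ with new subvariety $L_5$. Because the functions $p_I$ of $D_1$ are all constant in $n$, relations~\eqref{eq:inductant-construction} force every original double point to specialize onto $L_5$, and since the only $\tilde v_I$ that changes between $(m,n)$ and $(m,n-1)$ is $\tilde v_\emptyset$, the specialized points all sit on intersections $L_5 \cap L_t$ that become irrelevant and are erased; thus $S(m,n)$ has no double points. The main obstacle is then the ideal calculation $\mathcal I_{S(m,n)}(1,2) = 0$. The combinatorial core is that $L_4$ contributes two unshared $y$-equations and $L_5$ contributes one further unshared $y$-equation, so any bidegree $(1,2)$ monomial $x_i y_{j_1} y_{j_2}$ vanishing on both $L_4$ and $L_5$ must use both of its $y$-factors for these three equations, leaving only $x_i$ to vanish simultaneously on $L_1, L_2, L_3$. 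But the $x$-equations of $L_1, L_2, L_3$ form three pairwise disjoint pairs of size two, so no single $x_i$ can do the job. Once this is established, $S(m,n)$ is equiabundant and non-defective, and Lemma~\ref{lem:inductant-induction} produces the step
\[
    D_1(m, n-1) \text{ non-defective} \implies D_1(m,n) \text{ non-defective}.
\]

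To finish, the base cases $D_1(6, 80)$ and $D_1(7, 112)$ are supplied by Proposition~\ref{prop:nicecertificates}. Since $3m^2 - 5m + 2$ evaluates to $80$ at $m = 6$ and to $114$ at $m = 7$, two applications of the $(m, n-1)$-step lift $D_1(7, 112)$ to $D_1(7, 114)$. Alternating the two inductive steps from $D_1(6, 80)$ and $D_1(7, 114)$ then covers all $(m, n)$ with $m \geq 6$ and $n \geq 3m^2 - 5m + 2$, the $(m-2, n-\ell(m))$-step advancing $m$ by two along the boundary of the claimed region and the $(m, n-1)$-step filling in each horizontal line. Aside from the ideal-vanishing verification for $S(m,n)$, the remaining work is routine abundancy bookkeeping.
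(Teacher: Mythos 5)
Your proposal is correct and follows essentially the same approach as the paper: establish the $(m,n-1)$-inductant as trivially equiabundant and non-defective (both $y$-factors are forced by $L_4$ and $L_5$, then no single $x_i$ can vanish on all of $L_1,L_2,L_3$), combine it with the $(m-2,n-\ell(m))$-inductant supplied by Proposition~\ref{prop:E1}, and launch from the base cases $D_1(6,80)$ and $D_1(7,112)$ (the latter lifted to $D_1(7,114)$). The only cosmetic difference is that you spell out the monomial argument in slightly more detail than the paper does, and you use the fact that $p_\emptyset=0$ implicitly rather than naming it when concluding that all points become irrelevant.
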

\begin{proof}
    Let $S$ be the $(m,n-1)$-inductant of $D_1$. All $p_I$ in $D_1$ do not depend on $n$ and $p_{\emptyset}=0$, hence all points will get specialized to an intersection of the new subvariety $L$ with some of the previous, and thus become irrelevant, since the new subvariety will not share any equations with any of the other. No bidegree $(1,2)$ monomial will vanish on all of the $5$ subvarieties of $S'$, since one $y_j$ must be spent to vanish on $L$ and is not shared with any of the others. Another $y_j$ is needed to vanish on the subvariety coming from the $(m,n-2)$ inductant construction that created $D_1$ from $D_0$ and any $x_i$ vanishes on at most one of them. Hence no bidegree $(1,2)$ can vanish on all five. This establishes the induction
    \[
        \text{$D_1(m,n-1)$ non-defective} \implies \text{$D_1(m,n)$ non-defective}.
    \]
    From Proposition~\ref{prop:E1}, we also have the induction
    \[
        \text{$D_1(m-2,n-\ell(m))$ non-defective} \implies \text{$D_1(m,n)$ non-defective (for $m\geq 8$, $n\geq 3m^2-5m+2$)}.
    \]

    By Proposition~\ref{prop:nicecertificates}, we have non-defectivity of $D_1(6,80)$ and $D_1(7,112)$. The latter one implies the non-defectivity of $D_1(7,113)$ and in turn $D_1(7,114)$, and we notice that $3m^2-5m+2$ evaluated at $m=6$ and $m=7$ yields $80$ and $114$ respectively. Then the two inductions above extend non-defectivity of $D_1(6,80)$ and $D_1(7,114)$ to all $D_1(m,n)$ for $m\geq 6$, $n\geq 3m^2-5m+2$.
\end{proof}

\begin{prop}
    \label{prop:D0}
    For any nice $(m,n)$ with $m\geq 6$, $n\geq 3m^2-5m+4$, the configuration $D_0(m,n)$ is equiabundant and non-defective.
\end{prop}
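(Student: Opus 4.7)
The plan is to mimic the two-directional induction used in the proofs of Propositions~\ref{prop:D1} and \ref{prop:E0}, exploiting the fact that $D_1$ is already known to be an $(m,n-2)$-inductant of $D_0$ and $E_0$ is an $(m-2, n-\ell(m))$-inductant of $D_0$. Thus by Lemma~\ref{lem:inductant-induction} applied with these inductants, I obtain two implications:
\[
\text{$D_0(m,n-2)$ non-defective} \implies \text{$D_0(m,n)$ non-defective}
\]
for all nice $(m,n)$ with $m\geq 6$, $n\geq 3m^2-5m+4$ (using Proposition~\ref{prop:D1}, which guarantees non-defectivity of $D_1(m,n)$ for $n\geq 3m^2-5m+2$), and
\[
\text{$D_0(m-2,n-\ell(m))$ non-defective} \implies \text{$D_0(m,n)$ non-defective}
\]
for all nice $(m,n)$ with $m\geq 8$, $n\geq 3m^2-5m+4$ (using Proposition~\ref{prop:E0} to get non-defectivity of $E_0(m,n)$ for nice $(m,n)$ with $n\geq 3m^2-5m$).

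Next I would verify the parity compatibility of the second induction: since $\ell(m)=12m-22$ is even, $n-\ell(m)$ has the same parity as $n$, and $m-2$ has the same parity as $m$, so $(m-2,n-\ell(m))$ is nice whenever $(m,n)$ is. The bound lines up exactly: $3(m-2)^2-5(m-2)+4+\ell(m) = 3m^2-5m+4$, so the second induction lands precisely in the domain on which $D_0$ is asserted non-defective.

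For the base step, I would invoke Proposition~\ref{prop:nicecertificates} for the three certificates $D_0(6,82)$, $D_0(7,111)$, $D_0(7,112)$. Observe that $3m^2-5m+4$ evaluates to $82$ at $m=6$ and $116$ at $m=7$. For $m=6$, niceness forces $n$ even, so iterating the first induction step from $D_0(6,82)$ covers all nice $(6,n)$ with $n\geq 82$. For $m=7$, every $n$ yields a nice pair; iterating the first induction from the two certificates $D_0(7,111)$ and $D_0(7,112)$ produces non-defectivity for all $n\geq 111$, hence in particular for all $n\geq 116$. Finally, for $m\geq 8$, the second induction step descends $m$ by $2$, eventually landing in $m\in\{6,7\}$ within the range already handled.

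I do not anticipate any substantive obstacle: all ingredients have already been assembled in the previous propositions, and this proof is essentially bookkeeping that the parities match, the cubic bounds telescope correctly through $\ell(m)$, and the three prescribed base cases suffice. The only detail worth some care is checking that the second induction step is available when we only have $E_0$ non-defective for \emph{nice} $(m,n)$; this is exactly why the parity observation above is needed.
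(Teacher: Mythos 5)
Your proposal follows the paper's own proof exactly: two induction steps (one from $D_1$ via the $(m,n-2)$-inductant relation, one from $E_0$ via the $(m-2,n-\ell(m))$-inductant relation), the parity check that $(m-2,n-\ell(m))$ stays nice, the telescoping bound check, and the base cases $D_0(6,82)$, $D_0(7,111)$, $D_0(7,112)$. This is the intended route.

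However, there is a bookkeeping slip that you should be aware of, and that the paper itself shares. You state the first induction as valid for $n\geq 3m^2-5m+4$, while the correct threshold from Proposition~\ref{prop:D1} is $n\geq 3m^2-5m+2$; this looks like a transcription error. More importantly, even with the correct threshold, at $m=7$ the first induction step requires $n\geq 114$ (since it needs $D_1(7,n)$ to be non-defective), so the claim that iterating from $D_0(7,111)$ and $D_0(7,112)$ ``produces non-defectivity for all $n\geq 111$'' does not literally follow from the stated propositions. From $D_0(7,112)$ one reaches all even $n\geq 112$, but the step from $D_0(7,111)$ to $D_0(7,113)$ needs $D_1(7,113)$, which sits below the threshold $n\geq 114$ asserted in Proposition~\ref{prop:D1}. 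The odd chain $D_0(7,113)\to D_0(7,115)\to D_0(7,117)\to\cdots$ is therefore not justified by the cited propositions alone. The paper resolves this silently: the proof of Proposition~\ref{prop:D1} in fact establishes $D_1(7,113)$ on its way to $D_1(7,114)$, so $D_0(7,113)$ does follow -- but this is a fact from the \emph{proof} of Proposition~\ref{prop:D1}, not its stated conclusion. Your writeup should either note this explicitly or adjust the cited threshold for $D_1$ down to cover $D_1(7,113)$; as written, the odd-$n$ case at $m=7$ is not quite closed.
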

\begin{proof}
    By Propositions~\ref{prop:D1} and \ref{prop:E0}, we have induction steps
    \begin{align*}
        \text{$D_0(m,n-2)$ non-defective} &\implies \text{$D_0(m,n)$ non-defective}\quad\text{(for $m\geq6$, $n\geq 3m^2-5m+2$),}\\
        \text{$D_0(m-2,n-\ell(m))$ non-defective} &\implies \text{$D_0(m,n)$ non-defective}\quad\text{(for $m\geq 8$, $n\geq3m^2-5m$)},
    \end{align*}
    for nice $(m,n)$. Note that $3m^2-5m+4$ evaluates to $82$ and $116$ respectively at $m=6$ and $m=7$. By Proposition~\ref{prop:nicecertificates}, we have non-defectivity of $D_0(6,82)$, $D_0(7,111)$ and $D_0(112)$. We use the first kind of induction to prove $D_0(7,116)$ and $D_0(7,117)$ first. Subsequently, the two inductions extend the three base cases $D_0(6,82)$, $D_0(7,116)$, $D_0(7,117)$ to all nice $(m,n)$ with $m\geq 6$, $n\geq 3m^2-5m+4$.
\end{proof}

\begin{prop}
    \label{prop:C1}
    For any $(m,n)$ with $m\geq 4$, $n\geq 3m^2-5m+2$, the configuration $C_1(m,n)$ is equiabundant and non-defective.
\end{prop}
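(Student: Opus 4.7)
The plan is to mirror the strategy already used in Propositions~\ref{prop:D1} and~\ref{prop:E1}: I would establish two induction steps---one that increments $n$ by one, and one that uses the passage from $(m-2,n-\ell(m))$ to $(m,n)$ via the existing family $D_1$---and then chain them starting from the base cases provided by Proposition~\ref{prop:nicecertificates}.

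For the first step, I would consider $S$, the $(m,n-1)$-inductant of $C_1$. Because every $p_I$ of $C_1$ is independent of $n$ and $p_\emptyset=0$, relations~\eqref{eq:inductant-construction} force each of $C_1$'s double points to be specialized into the new subvariety $L_4$ and then erased as irrelevant, so $S(m,n)$ carries no double points at all. The only genuinely delicate point---and the step I would dwell on most carefully---is verifying that no bidegree $(1,2)$ monomial vanishes simultaneously on $L_1\cup L_2\cup L_3\cup L_4$. I would observe: $L_4$ contributes exactly one new $y$-equation not shared with any other subvariety; the two $y$-equations from $\tilde v_{\set3}=2$ defining $L_3$ are disjoint from those appearing on $L_1$ and $L_2$; and each $x_i$ vanishes on at most one of $L_1,L_2$ (since $\tilde u_{\set{1,2}}=0$). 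A candidate monomial $x_iy_{j_1}y_{j_2}$ would thus have to use one of its $y$'s on $L_4$ and the other on $L_3$, leaving $x_i$ to account for both $L_1$ and $L_2$ at once, which is impossible. Hence $\mathcal I_{S(m,n)}(1,2)=0$, so $S(m,n)$ is equiabundant and non-defective, and Lemma~\ref{lem:inductant-induction} yields
\[
\text{$C_1(m,n-1)$ non-defective} \;\Longrightarrow\; \text{$C_1(m,n)$ non-defective}.
\]

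The second step comes for free from machinery already in place: $D_1$ is by construction an $(m-2,n-\ell(m))$-inductant of $C_1$, so Proposition~\ref{prop:D1} combined with Lemma~\ref{lem:inductant-induction} directly furnishes
\[
\text{$C_1(m-2,n-\ell(m))$ non-defective} \;\Longrightarrow\; \text{$C_1(m,n)$ non-defective}
\]
whenever $m\geq 6$ and $n\geq 3m^2-5m+2$.

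To conclude, I would note that $3m^2-5m+2$ equals $30$ at $m=4$ and $52$ at $m=5$, so the base cases $C_1(4,30)$ and $C_1(5,52)$ from Proposition~\ref{prop:nicecertificates} sit precisely on the boundary of the claimed region. The arithmetic identity $3(m-2)^2-5(m-2)+2+\ell(m)=3m^2-5m+2$ then ensures that the two induction steps extend non-defectivity from these two base cases to the whole set $\{(m,n):m\geq 4,\ n\geq 3m^2-5m+2\}$. Equiabundancy is automatic from Lemma~\ref{lem:defect-additivity}, since $C_1$ is itself an inductant of the equiabundant family $B_1$.
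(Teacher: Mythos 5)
Your proposal is correct and follows essentially the same route as the paper's proof: the same $(m,n-1)$-inductant $S$ with all points erased as irrelevant and the same combinatorial argument showing no bidegree $(1,2)$ monomial can vanish on all four subvarieties (one $y$ for $L_4$, one $y$ for $L_3$, leaving $x_i$ unable to cover both $L_1$ and $L_2$), together with the $(m-2,n-\ell(m))$ step via Proposition~\ref{prop:D1} and the same two base cases $C_1(4,30)$, $C_1(5,52)$. The equiabundancy remark via Lemma~\ref{lem:defect-additivity} is also in line with how the paper handles it.
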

\begin{proof}
    Let $S$ be the $(m,n-1)$-inductant of $C_1$. All $p_I$ in $C_1$ are constant with respect to $n$ and $p_{\emptyset}$ is zero, hence in the inductant construction, all points will be specialized to an intersection of the new subvariety $L$ with an existing coordinate subvariety. Since $L$ will be defined by a single equation, not shared with any other subvariety, all of these points will become irrelevant. Further, no bidegree $(1,2)$ monomial will vanish on all $4$ subvarieties of $C_1$: one variable $y_j$ is needed just to vanish on $L$ (and it does not vanish on any of the others), another $y_j$ is needed to vanish on the subvariety coming from the $(m,n-2)$ inductant construction that created $C_1$ from $C_0$ and any $x_i$ may only vanish on at most one of the remaining subvarieties. Hence $S$ is equiabundant and non-defective everywhere it is defined, establishing the induction
    \[
        \text{$C_1(m,n-1)$ non-defective} \implies \text{$C_1(m,n)$ non-defective}.
    \]
    By Proposition~\ref{prop:D1}, we also have the induction
    \[
        \text{$C_1(m-2,n-\ell(m))$ non-defective} \implies \text{$C_1(m,n)$ non-defective (for $m\geq 6$, $n\geq 3m^2-5m+2$)}.
    \]

    These two inductions combined will then extend the base cases $C_1(4,30)$, $C_1(3,52)$ (note that $3m^2-5m+2$ evaluated to $30$ and $52$ at $m=4$ and $m=5$ respectively), which are non-defective by Proposition~\ref{prop:nicecertificates}, to all $(m,n)$ with $m\geq4$, $n\geq 3m^2-5m+2$.
\end{proof}

\begin{prop}
    \label{prop:C0}
    For any nice $(m,n)$ with $m\geq 4$, $n\geq 3m^2-5m+4$, the configuration $C_0(m,n)$ is equiabundant and non-defective.
\end{prop}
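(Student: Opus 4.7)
The plan is to mirror the argument used for Proposition~\ref{prop:D0}, with $C_1$ playing the role that $D_1$ played there and $D_0$ playing the role that $E_0$ played. Since $C_1$ is an $(m,n-2)$-inductant of $C_0$ and $D_0$ is an $(m-2,n-\ell(m))$-inductant of $C_0$, Lemma~\ref{lem:inductant-induction} combined with Propositions~\ref{prop:C1} and \ref{prop:D0} will give me the two implications
\begin{gather*}
    \text{$C_0(m,n-2)$ non-defective} \implies \text{$C_0(m,n)$ non-defective} \quad \text{(for $m\geq 4$, $n\geq 3m^2-5m+2$),}\\
    \text{$C_0(m-2,n-\ell(m))$ non-defective} \implies \text{$C_0(m,n)$ non-defective} \quad \text{(for $m\geq 6$, $n\geq 3m^2-5m+4$),}
\end{gather*}
both for nice $(m,n)$.

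Next, I would use the three base cases $C_0(4,30)$, $C_0(5,50)$, $C_0(5,51)$ from Proposition~\ref{prop:nicecertificates}. The proposition's bound $3m^2-5m+4$ evaluates to $32$ at $m=4$ and to $54$ at $m=5$, so I would apply the first induction step once or twice to obtain $C_0(4,32)$ from $C_0(4,30)$ and $C_0(5,54),C_0(5,55)$ from $C_0(5,50),C_0(5,51)$.

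Finally, the two induction steps propagate these extended base cases to all nice $(m,n)$ with $m\geq 4$ and $n\geq 3m^2-5m+4$: the $(m-2,n-\ell(m))$-step advances $m$ by $2$ and sends the critical value $n=3m^2-5m+4$ to $n-\ell(m)=3m^2-17m+26=3(m-2)^2-5(m-2)+4$, so the bound transfers exactly between successive values of $m$; meanwhile the $(m,n-2)$-step fills in all remaining nice values of $n$. Niceness is preserved at each step since $\ell(m)=12m-22$ and $2$ are both even, so parities of $m$ and $n$ stay unchanged. The only real task is to check that these two inductions interlock to cover all nice pairs with no gaps, which is just the bookkeeping already used in Proposition~\ref{prop:D0}; no genuine new obstacle arises, as all the substantive work (verifying the inductant constructions and the non-defectivity of $C_1$ and $D_0$) has been done in the earlier propositions.
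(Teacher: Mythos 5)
Your proposal is correct and follows the same route as the paper: use Propositions~\ref{prop:C1} and~\ref{prop:D0} via Lemma~\ref{lem:inductant-induction} to get the two induction steps, bump the base cases $C_0(4,30)$, $C_0(5,50)$, $C_0(5,51)$ up to $n=32$, $54$, $55$ with the $(m,n-2)$-step, and then interlock the two inductions. In fact your version is cleaner than the paper's printed proof, which contains two copy-paste typos (citing Proposition~\ref{prop:C0} instead of \ref{prop:D0}, and writing ``$D_0(m,n)$'' where ``$C_0(m,n)$'' is meant in the displayed implications); you also correctly verified the identity $3m^2-5m+4-\ell(m)=3(m-2)^2-5(m-2)+4$, which the paper leaves implicit.
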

\begin{proof}
    By Propositions~\ref{prop:C1} and \ref{prop:C0}, we have induction steps
    \begin{align*}
        \text{$C_0(m,n-2)$ non-defective} &\implies \text{$D_0(m,n)$ non-defective}\quad\text{(for $m\geq4$, $n\geq 3m^2-5m+2$),}\\
        \text{$C_0(m-2,n-\ell(m))$ non-defective} &\implies \text{$D_0(m,n)$ non-defective}\quad\text{(for $m\geq 6$, $n\geq3m^2-5m+4$)},
    \end{align*}
    for nice $(m,n)$.
    Note that $3m^2-5m+4$ evaluated at $m=4$ and $m=5$ yields $32$ and $54$ respectively. By Proposition~\ref{prop:nicecertificates}, we have non-defectivity of $C_0(4,30)$, $C_0(5,50)$ and $C_0(5,51)$. By the first kind of induction, we first obtain that $C_0(4,32)$, $C_0(5,54)$ and $C_0(5,55)$ are non-defective, and subsequently, the two induction extend this non-defectivity to all $C_0(m,n)$ for nice $(m,n)$ with $m\geq4$, $n\geq 3m^2-5m+4$.
\end{proof}

\begin{prop}
    \label{prop:B2}
    For any $(m,n)$ with $m\geq 2$, $n\geq 3m^2-5m+3$, the configuration $B_2(m,n)$ is equiabundant and non-defective.
\end{prop}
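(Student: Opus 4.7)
The plan is to mirror the strategies of Proposition~\ref{prop:E1} and Proposition~\ref{prop:G0}: I construct two auxiliary families as inductants of $B_2$---an $(m,n-1)$-inductant $S$ and an $(m-1,n-6m+9)$-inductant $S'$---show that both \emph{trivialize} (that is, have zero virtual dimension, no relevant points, and vanishing ideal in bidegree $(1,2)$), and then combine the resulting induction steps with the certified base case $B_2(2,5)$ to cover the entire claimed range of $(m,n)$.

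For $S$, the $(m,n-1)$-inductant: since $p_\emptyset = 0$ and every other $p_I$ is constant in $n$ within $B_2$, every point is specialized to the new subvariety $L_4$ during construction, and a direct check via \eqref{eq:inductant-construction} shows $u'_{I\cup\set{4}}+v'_{I\cup\set{4}} = 0$ for each relevant $I$, making these specialized points irrelevant. For the ideal, the new $L_4$ contributes a single $y$-equation unique to it, while $L_2$ has two unique $y$-equations and $L_3$ has one unique $y$-equation; vanishing on $L_2$, $L_3$, and $L_4$ simultaneously would demand three distinct $y$-factors, impossible in a bidegree-$(1,2)$ monomial. Hence $S$ is equiabundant and non-defective, and Lemma~\ref{lem:inductant-induction} yields the step
\[
    B_2(m,n-1)\text{ non-defective} \implies B_2(m,n)\text{ non-defective.}
\]

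For $S'$, the $(m-1,n-6m+9)$-inductant: constancy of the $p_I$ again makes all specialized points irrelevant. The new $L_4$ now carries one $x$-equation unique to it ($\tilde u'_{\set 4}=1$) and $6m-9$ $y$-equations shared only with $L_1$ ($\tilde v'_{\set{1,4}}=Q(m)-Q(m-1)=6m-9$), and no further equations. A bidegree-$(1,2)$ monomial must spend one $y$-factor on each of $L_2$ and $L_3$, exhausting its $y$-slots with variables disjoint from $L_1$ and $L_4$; so its $x$-factor must simultaneously vanish on $L_1$ and $L_4$. But $L_1$ and $L_4$ share no $x$-equations (as $\tilde u'_{\set{1,4}}=0$), a contradiction. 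Hence $S'$ also trivializes and I obtain
\[
    B_2(m-1,n-6m+9)\text{ non-defective} \implies B_2(m,n)\text{ non-defective}
\]
for $m\geq 3$ in the appropriate $n$-range.

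Finally, non-defectivity of $B_2(2,5)$ is certified by Proposition~\ref{prop:nicecertificates}. Step~1 then extends it to all $B_2(2,n)$ with $n\geq 5$. An induction on $m$ via Step~2 completes the rest: assuming $B_2(m-1,n')$ is non-defective for $n'\geq 3(m-1)^2-5(m-1)+3$, Step~2 produces $B_2(m,n)$ non-defective for $n\geq 3m^2-5m+2$, which already contains the target range $n\geq 3m^2-5m+3$. I do not anticipate a deep obstacle; the only real work is the explicit verification that both inductants trivialize, and the bookkeeping of $n$-bounds to check that both induction steps apply throughout the claimed domain.
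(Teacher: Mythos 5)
Your proposal is correct and follows the same overall structure as the paper's proof: two inductants of $B_2$, show both trivialize, combine with the base case $B_2(2,5)$. For the first inductant $S$, your argument matches the paper's almost verbatim. For the second inductant $S'$, the paper dispatches it by asserting (via Remark~\ref{rmrk:commuting-inductants}) that $S'$ was \uv{already investigated} in the proof of Proposition~\ref{prop:C1}. You instead carry out the computation directly: you correctly find $\tilde u'_{\set4}=1$, $\tilde u'_{\set{1,4}}=0$, $\tilde v'_{\set4}=0$, $\tilde v'_{\set{1,4}}=Q(m)-Q(m-1)=6m-9$, observe the points all become irrelevant because the $p_I$ in $B_2$ are constant and $u'_{I\cup\set4}+v'_{I\cup\set4}=0$ for the relevant $I$, and then argue that the two $y$-factors of a $(1,2)$-monomial are forced onto the disjoint unique $y$-equations of $L_2$ and $L_3$, leaving the $x$-factor required to vanish on both $L_1$ and $L_4$, which share no $x$-equation. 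This is sound, and arguably more careful than the paper's shortcut: the $(m,n-1)$-inductant of $C_1$ and the $(m-1,n-6m+9)$-inductant of $B_2$ are not literally the same family (compare e.g.\ $\tilde u'_\emptyset=m-3$ vs.\ $m-2$), so the paper's reference is at best imprecise, though the trivialization argument is of the same type. Your bound bookkeeping ($3m^2-5m+2$ vs.\ the claimed $3m^2-5m+3$, the $m\geq3$ restriction for the diagonal step, positivity of $\tilde u'_\emptyset$ and $\tilde v'_\emptyset$) is also correct. In short: same strategy, with a direct and self-contained verification of $S'$ where the paper cites a remark.
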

\begin{proof}
    Let $S$ be the $(m,n-1)$-inductant of $B_2$. Since all $p_I$ were constant in $B_2$ and $p_\emptyset=0$, all points will get specialized to an intersection of the new one-codimensional coordinate subvariety $L$ with one of the existing subvarieties, and thus become irrelevant. On the side of monomials, $S$ will have three subvarieties that only have $y_j$ equations and that do not share any of their equations with other subvarieties, hence any monomial vanishing on all three of them would need to have degree at least $3$ in the variables $y_j$. Thus $S$ is always equiabundant and non-defective, which establishes the induction
    \[
        \text{$B_2(m,n-1)$ non-defective} \implies \text{$B_2(m,n)$ non-defective}.
    \]

    Now let $S'$ be the $(m-1,n-6m+9)$-inductant of $B_2$. Due to Remark~\ref{rmrk:commuting-inductants}, we have actually already investigated this family in the proof of Proposition~\ref{prop:C1}, hence we immediately have
    \[
        \text{$B_2(m-1,n-6m+9)$ non-defective} \implies \text{$B_2(m,n)$ non-defective}.
    \]

    By Proposition~\ref{prop:nicecertificates}, we know that $B_2(2,5)$ is non-defective. Since $3m^2-5m+3$ evaluates to $5$ at $m=2$, the two inductions above extend this base case to all $(m,n)$ with $m\geq2$, $n\geq 3m^2-5m+3$.
\end{proof}

\begin{prop}
    \label{prop:B1}
    For any nice $(m,n)$ with $m\geq 2$, $n\geq 3m^2-5m+2$, the configuration $B_1(m,n)$ is equiabundant and non-defective.
\end{prop}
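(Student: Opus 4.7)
The plan is to follow the two-inductant structure used for Propositions~\ref{prop:C0}, \ref{prop:D0}, and \ref{prop:E0}. One induction step increases $n$ by one and is furnished by Proposition~\ref{prop:B2}; the other reduces $m$ by two and is furnished by Proposition~\ref{prop:C1}. Together with two base cases from Proposition~\ref{prop:nicecertificates}, these should close off the entire range.

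First I would invoke Proposition~\ref{prop:B2}. Since $B_2$ is the $(m,n-1)$-inductant of $B_1$, Lemma~\ref{lem:inductant-induction} combined with non-defectivity of $B_2(m,n)$ yields the implication
\[
    \text{$B_1(m,n-1)$ non-defective} \implies \text{$B_1(m,n)$ non-defective}
\]
for $m\geq 2$, $n\geq 3m^2-5m+3$. Second, since $C_1$ is the $(m-2,n-\ell(m))$-inductant of $B_1$, Proposition~\ref{prop:C1} analogously gives
\[
    \text{$B_1(m-2,n-\ell(m))$ non-defective} \implies \text{$B_1(m,n)$ non-defective}
\]
for $m\geq 4$, $n\geq 3m^2-5m+2$. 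Since $\ell(m)=12m-22$ is even, the second implication preserves the parities of both $m$ and $n$, and hence preserves niceness.

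Proposition~\ref{prop:nicecertificates} supplies non-defectivity of the base cases $B_1(2,4)$ and $B_1(3,14)$, and one checks that $3m^2-5m+2$ takes the values $4$ and $14$ at $m=2$ and $m=3$ respectively. Repeated application of the first implication then propagates these to all $B_1(2,n)$ with $n\geq 4$ and all $B_1(3,n)$ with $n\geq 14$; passing through intermediate ugly pairs such as $(2,5),(2,7),\ldots$ poses no difficulty, since Proposition~\ref{prop:B2} carries no niceness restriction. For $m\geq 4$, the second implication reduces any nice $(m,n)$ with $n\geq 3m^2-5m+2$ to the nice pair $(m-2,\,n-\ell(m))$ with $n-\ell(m)\geq 3(m-2)^2-5(m-2)+2$, which is handled by strong induction on $m$.

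I do not anticipate any real obstacle: the structure is essentially identical to the proof of Proposition~\ref{prop:C0}, and the only arithmetic to verify is that $3m^2-5m+2$ matches the base case thresholds at $m=2,3$ and that the $m$-reduction lands within the previously established range. Niceness of the two base cases $(2,4)$ and $(3,14)$ is immediate.
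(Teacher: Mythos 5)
Your proof matches the paper's own argument essentially verbatim: the same two induction steps (via Propositions~\ref{prop:B2} and \ref{prop:C1}), the same bounds $n\geq 3m^2-5m+3$ for the $(m,n-1)$ step and $m\geq4$, $n\geq 3m^2-5m+2$ for the $(m-2,n-\ell(m))$ step, and the same base cases $B_1(2,4)$, $B_1(3,14)$. Your explicit remark that the $(m,n-1)$ step passes harmlessly through ugly intermediate pairs because $B_1$ and $B_2$ are defined and behave correctly there (with Proposition~\ref{prop:B2} carrying no niceness restriction) is a helpful clarification that the paper leaves implicit.
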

\begin{proof}
    By Propositions~\ref{prop:B2} and \ref{prop:C1}, we have induction steps
    \begin{align*}
        \text{$B_1(m,n-1)$ non-defective} &\implies \text{$B_1(m,n)$ non-defective}\quad\text{(for $m\geq2$, $n\geq 3m^2-5m+3$),}\\
        \text{$B_1(m-2,n-\ell(m))$ non-defective} &\implies \text{$B_1(m,n)$ non-defective}\quad\text{(for $m\geq 4$, $n\geq3m^2-5m+2$)}.
    \end{align*}
    Note that $3m^2-5m+2$ evaluated at $m=2$ and $m=3$ yields $4$ and $14$ respectively. Hence the non-defectivity of $B_1(2,4)$ and $B_1(3,14)$ which we have by Proposition~\ref{prop:nicecertificates} extends via the two inductions above to all $B_1(m,n)$ for $m\geq 2$, $n\geq 3m^2-5m+2$.
\end{proof}

\begin{prop}
    \label{prop:B0}
    For any nice $(m,n)$ with $m\geq 2$, $n\geq 3m^2-5m+4$, the configuration $B_0(m,n)$ is superabundant and non-defective.
\end{prop}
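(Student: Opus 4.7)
The plan is to follow the template established in Propositions~\ref{prop:C0}, \ref{prop:D0}, and \ref{prop:E0} for the nice families. Since $\vdim B_0(m,n) = -1$ unconditionally (as recorded in the definition of $B_0$), superabundancy is automatic and only non-defectivity remains to be established.

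The relevant inductant arrows pointing into $B_0$ in Figure~\ref{fig:nicestrategy} are $B_1$, which is an $(m,n-2)$-inductant of $B_0$, and $C_0$, which is an $(m-2,n-\ell(m))$-inductant of $B_0$. Propositions~\ref{prop:B1} and \ref{prop:C0} provide non-defectivity of these inductant families (both are equiabundant and hence in particular superabundant, matching the superabundancy of $B_0$), so Lemma~\ref{lem:inductant-induction} yields the two implications
\begin{gather*}
    \text{$B_0(m,n-2)$ non-defective} \implies \text{$B_0(m,n)$ non-defective}\quad\text{(for $m \geq 2$, nice $(m,n)$, $n \geq 3m^2-5m+2$),}\\
    \text{$B_0(m-2,n-\ell(m))$ non-defective} \implies \text{$B_0(m,n)$ non-defective}\quad\text{(for $m \geq 4$, nice $(m,n)$, $n \geq 3m^2-5m+4$).}
\end{gather*}

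From Proposition~\ref{prop:nicecertificates} I take as base cases $B_0(2,4)$, $B_0(3,12)$, and $B_0(3,13)$. Since $3m^2-5m+4$ evaluates to $6$ at $m=2$ and to $16$ at $m=3$, a few applications of the first induction lift these base cases above the threshold at $m=2$ and $m=3$: the two $m=3$ certificates cover both parities of $n$, while niceness at $m=2$ restricts $n$ to even values, so $B_0(2,4)$ alone suffices there. The two inductions then extend non-defectivity to all nice $(m,n)$ with $m \geq 2$, $n \geq 3m^2-5m+4$. Two arithmetic facts make this routine: first, $\ell(m)=12m-22$ is even, so the $m$-step preserves the parity of $n$ (and hence niceness); second, the identity $3m^2 - 5m + 4 = \bigl(3(m-2)^2 - 5(m-2) + 4\bigr) + \ell(m)$ ensures that the threshold on $n$ propagates correctly with each $m$-step. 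I expect no genuine obstacle beyond this bookkeeping, since the proof is essentially a direct application of the machinery already assembled in this section.
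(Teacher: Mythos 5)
Your proof is correct and follows essentially the same route as the paper: the same two induction steps (from $B_1$ via the $(m,n-2)$-inductant and from $C_0$ via the $(m-2,n-\ell(m))$-inductant), the same base cases $B_0(2,4)$, $B_0(3,12)$, $B_0(3,13)$ from Proposition~\ref{prop:nicecertificates}, and the same bookkeeping to lift those base cases to the threshold $n = 3m^2-5m+4$ at $m=2,3$ before propagating in $m$. Your extra remarks on $\ell(m)$ being even (so niceness is preserved) and on why one certificate suffices at $m=2$ but two are needed at $m=3$ are correct details that the paper leaves implicit.
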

\begin{proof}
    By Propositions~\ref{prop:B1} and \ref{prop:C0}, we have induction steps
    \begin{align*}
        \text{$B_0(m,n-2)$ non-defective} &\implies \text{$B_0(m,n)$ non-defective}\quad\text{(for $m\geq2$, $n\geq 3m^2-5m+2$),}\\
        \text{$B_0(m-2,n-\ell(m))$ non-defective} &\implies \text{$B_0(m,n)$ non-defective}\quad\text{(for $m\geq 4$, $n\geq3m^2-5m+4$)},
    \end{align*}
    for nice $(m,n)$.
    Note that $3m^2-5m+4$ evaluated at $m=2$ and $m=3$ yields $6$ and $16$ respectively, whilst by Proposition~\ref{prop:nicecertificates}, we have non-defectivity of $B_0(2,4)$, $B_0(3,12)$ and $B_0(3,13)$. First we use the first kind of induction to obtain $B_0(2,6)$, $B_0(3,16)$, $B_0(3,17)$ from these, and then the two inductions extend the non-defectivity to all nice $(m,n)$ with $m\geq2$, $n\geq 3m^2-5m+4$.
\end{proof}

\begin{theorem}
    \label{thrm:A0nice}
    For any nice $(m,n)$ with $m\geq3$, $n\geq \ceil{\frac{m^3-2m-5}2}$, the configuration $A_0(m,n)$ is superabundant and non-defective.
\end{theorem}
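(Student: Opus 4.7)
The plan is to prove Theorem~\ref{thrm:A0nice} as the final step of the inductive scheme developed in this subsection, performing an induction on $m$ in increments of $2$ that uses the family $B_0$ as the inductant linking $A_0(m, n)$ to $A_0(m-2, n - Q(m))$, where $Q(m) = 3m^2 - 6m + 2$.

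For the base cases $m = 3$ and $m = 4$, the bound $n \geq \ceil{(m^3-2m-5)/2}$ evaluates to $8$ and $26$ respectively. Since every pair $(3, n)$ is nice, one takes the non-defectivity of $A_0(3, 8)$ and $A_0(3, 9)$ from Proposition~\ref{prop:nicecertificates} and applies Proposition~\ref{prop:A1} repeatedly to cover all $n \geq 8$. For $m = 4$, nice requires $n$ even; Proposition~\ref{prop:nicecertificates} supplies the starting point $A_0(4, 26)$, and Proposition~\ref{prop:A1} lifts this to all even $n \geq 26$.

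For the inductive step ($m \geq 5$), one invokes Lemma~\ref{lem:inductant-induction} applied to the $(m-2, n-Q(m))$-inductant relation between $B_0$ and $A_0$. Non-defectivity and superabundancy of $B_0(m, n)$ come from Proposition~\ref{prop:B0}, whose hypothesis $n \geq 3m^2 - 5m + 4$ is dominated by our bound for $m \geq 5$ (this is equivalent to $m^3 - 6m^2 + 8m - 13 \geq 0$, which holds at $m = 5$ with value $2$ and increases from there). Non-defectivity of $A_0(m-2, n-Q(m))$ is the inductive hypothesis, while its superabundancy follows from the fact that the stated bound $n \geq \ceil{(m^3-2m-5)/2}$ is at least $3\binom{m+1}3 - m - 1 = (m^3 - 3m - 2)/2$ for all $m \geq 3$.

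The key arithmetic identity making the induction close tightly on the stated bound is
\[
    Q(m) + \ceil{\frac{(m-2)^3 - 2(m-2) - 5}{2}} = \ceil{\frac{m^3 - 2m - 5}{2}},
\]
which follows by direct expansion since $(m-2)^3 - 2(m-2) - 5 = m^3 - 6m^2 + 10m - 9$ and $2Q(m) = 6m^2 - 12m + 4$. Parity also propagates correctly: $Q(m) \equiv m \pmod{2}$, so for odd $m$ the pair $(m-2, n - Q(m))$ is automatically nice for any $n$, while for even $m$ the value $Q(m)$ is even and $n - Q(m)$ inherits the parity of $n$. Beyond these arithmetic verifications there is no significant obstacle, since the theorem is essentially the assembled output of the chain of propositions already proved in this section; the bulk of the work has been invested in constructing $B_0$ and confirming its non-defectivity on the required range.
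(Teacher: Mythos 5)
Your proof is correct and takes essentially the same approach as the paper: you use the $(m-2,n-Q(m))$-inductant $B_0$ for the step in $m$ via Proposition~\ref{prop:B0} and Lemma~\ref{lem:inductant-induction}, the $A_1$ step from Proposition~\ref{prop:A1} to fill out $n$, and the three computational base cases at $m=3,4$, with the key arithmetic identity $Q(m)+\ceil{\frac{(m-2)^3-2(m-2)-5}{2}}=\ceil{\frac{m^3-2m-5}{2}}$ (which the paper states in the equivalent difference form) and the parity check matching the paper's reasoning exactly.
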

\begin{proof}
    Note that $\ceil{\frac{m^3-2m-5}2} = \frac{m^3-2m-5}2$ for odd $m$ and $\ceil{\frac{m^3-2m-5}2} = \frac{m^3-2m-4}2$ for even $m$. In either case,
    $3\binom{m+1}3 - \zav{\ceil{\frac{m^3-2m-5}2} + m+1} = -\ceil{\frac{m-3}2}$ is non-positive for $m\geq3$, so $A_0(m,n)$ will be superabundant for all nice $(m,n)$ satisfying the bounds. Note further that
    \[
        \ceil{\frac{m^3-2m-5}2}  - \ceil{\frac{(m-2)^3-2(m-2)-5}2} = Q(m).
    \]

    Observe that for $m\geq5$, we have $\ceil{\frac{m^3-2m-5}2}\geq 3m^2-5m+4$, hence by Proposition~\ref{prop:B0} we have the induction step
    \[
        \text{$A_0(m-2,n-Q(m))$ non-defective} \implies \text{$A_0(m,n)$ non-defective}\quad\text{(for $m\geq5$, $\textstyle n\geq \ceil{\frac{m^3-2m-5}2}$)}.
    \]
    We also have $\ceil{\frac{m^3-2m-5}2}+2\geq m$ for all $m\geq3$, hence Proposition~\ref{prop:A1} applies and gives us the induction step
    \[
        \text{$A_0(m,n-2)$ non-defective} \implies \text{$A_0(m,n)$ non-defective}\quad\text{(for $m\geq3$, $\textstyle n\geq \ceil{\frac{m^3-2m-5}2}+2$)}.
    \]

    By Proposition~\ref{prop:nicecertificates}, we have non-defectivity of $A_0(3,8)$, $A_0(3,9)$ and $A_0(4,26)$; note that $\ceil{\frac{m^3-2m-5}2}$ evaluates to $8$ and $26$ at $m=3$ and $m=4$ respectively. Thus the two induction steps described above then extend these base cases to non-defectivity of $A_0(m,n)$ for all nice $(m,n)$ with $m\geq3$, $n\geq\ceil{\frac{m^3-2m-5}2}$.
\end{proof}

\begin{remark}
    \label{rmrk:vertical-alternative}
    Proposition~\ref{prop:A1} could alternatively be proved through further inductants. There are likely many different strategies to facilitate this, we sketch one in Figure~\ref{fig:verticalstrategy} that we believe works.
    We do not wish to encumber this article with pursuing it in detail, rather, our purpose in this remark is to note that Proposition~\ref{prop:A1} is not a qualitatively different tool from the rest of the inductant machinery of this subsection.

\begin{figure}[tb]
    \centering
    \begin{tikzpicture}[n/.style = {draw, circle, inner sep=2pt}, a/.style = {-{Latex[scale=1]}, shorten <=2pt, shorten >=2pt}, scale=2.35]
        \node[n] (A0) at (0,0) {$A_0$};
        \node[n] (A1) at (0,1) {$A_1$};
        \node[n] (A2) at (0,2) {$A_2$};
        \node[n] (A3) at (1,1) {$A_3$};
        \node[n] (A4) at (1,2) {$A_4$};
        \node[n] (A5) at (2,1) {$A_5$};

        \draw[a](A1) -- (A0) node[midway, left=.1cm] {\tinylabel{$(m,n-2)$}};
        \draw[a](A2) -- (A1) node[midway, left=.1cm] {\tinylabel{$(m,n-2)$}};
        \draw[a](A4) -- (A3) node[midway, right=.1cm] {\tinylabel{$(m,n-2)$}};

        \draw[a](A3) -- (A1) node[midway, below=.3cm] {\tinylabel{$(m-2,n-2)$}};
        \draw[a](A4) -- (A2) node[midway, above=.3cm] {\tinylabel{$(m-2,n-2)$}};
        \draw[a](A5) -- (A3) node[midway, below=.3cm] {\tinylabel{$(m-1,n-1)$}};
    \end{tikzpicture}
    \caption{Families of coordinate configurations in an inductive strategy to prove non-defectivity of $A_1$. An arrow labeled with $(M,N)$ denotes that a family is an $(M,N)$-inductant of the family being pointed to. These inductant constructions are sufficient to define all $A_2,\dots, A_6$; we defined $A_1$ explicitly in the paragraph preceding Proposition~\ref{prop:A1}. The reader may notice that $A_2(m,n)$ is actually the configuration from Example~\ref{ex:basic}.}
    \label{fig:verticalstrategy}
\end{figure}
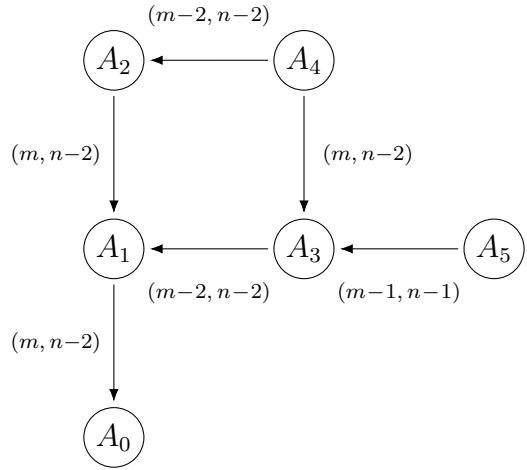
\end{remark}

\subsection{The ugly case}
\label{subsec:ugly}

Since we already have non-defectivity of $A_0(m,n)$ for nice cases with $n\gg m$, we will now try to prove non-defectivity of some of the ugly cases of $A_0(m,n)$ with an induction step from some nice $(m,n')$. Comparing virtual dimensions in the ugly and nice cases and solving
\[
     \frac{n+m+1}2- 3\binom{m+1}3  \geq (n'+m+1) - 3\binom{m+1}3
\]
for $n'$ suggests we should choose $n'\leq \frac{n-m-1}{2}$. We need to ensure that $n'$ is even though, so the largest possible $n'$ then becomes $\frac{n-m-\epsilon_{m,n}}2$, where we have denoted by $\epsilon_{m,n}\in\set{1,3}$ the residue of $n-m$ modulo $4$. Because of this splitting of cases modulo $4$, we then choose further induction steps in such way that $n-m$ modulo $4$ is preserved, leading to the following configurations (see Figure~\ref{fig:uglystrategy} for the inductant relations between them):

\begin{figure}[tb]
    \centering
    \begin{tikzpicture}[n/.style = {draw, circle, inner sep=2pt}, a/.style = {-{Latex[scale=1]}, shorten <=2pt, shorten >=2pt}, scale=2.35]
        \node[n] (A0) at (0,0) {\familyname A0};
        \node[n] (A1) at (0,1) {\hatname A1};
        \node[n] (A2) at (0,2) {\hatname A2};
        \node[n] (A3) at (0,3) {\hatname A3};
        \node[n] (B1) at (1,1) {\hatname B1};
        \node[n] (B2) at (1,2) {\hatname B2};
        \node[n] (B3) at (1,3) {\hatname B3};
        \node[n] (C1) at (2,1) {\hatname C1};
        \node[n] (C2) at (2,2) {\hatname C2};
        \node[n] (D1) at (3,1) {\hatname D1};

        \draw[a] (A1)--(A0) node[midway, left=.3cm] {\tinylabel{\small$\zav{m, \frac{n-m-\epsilon_{m,n}}2}$}};
        \draw[a] (A2)--(A1) node[midway, left=.2cm] {\tinylabel{$(m,n-4)$}};
        \draw[a] (A3)--(A2) node[midway, left=.2cm] {\tinylabel{$(m,n-4)$}};
        \draw[a] (B2)--(B1) node[midway, left=.2cm] {\tinylabel{$(m,n-4)$}};
        \draw[a] (B3)--(B2) node[midway, left=.2cm] {\tinylabel{$(m,n-4)$}};
        \draw[a] (C2)--(C1) node[midway, left=.2cm] {\tinylabel{$(m,n-4)$}};

        \draw[a] (B1)--(A1) node[midway, below=.2cm] {\tinylabel{$(m-2,n-6)$}};
        \draw[a] (C1)--(B1) node[midway, below=.2cm] {\tinylabel{$(m-2,n-6)$}};
        \draw[a] (D1)--(C1) node[midway, below=.2cm] {\tinylabel{$(m-2,n-6)$}};
        \draw[a] (B2)--(A2) node[midway, below=.2cm] {\tinylabel{$(m-2,n-6)$}};
        \draw[a] (C2)--(B2) node[midway, below=.2cm] {\tinylabel{$(m-2,n-6)$}};
        \draw[a] (B3)--(A3) node[midway, below=.2cm] {\tinylabel{$(m-2,n-6)$}};
    \end{tikzpicture}
    \caption{Families of coordinate configurations in the inductive strategy for the ugly case. An arrow labeled with $(M,N)$ denotes that a family is an $(M,N)$-inductant of the family being pointed to.}
    \label{fig:uglystrategy}
\end{figure}
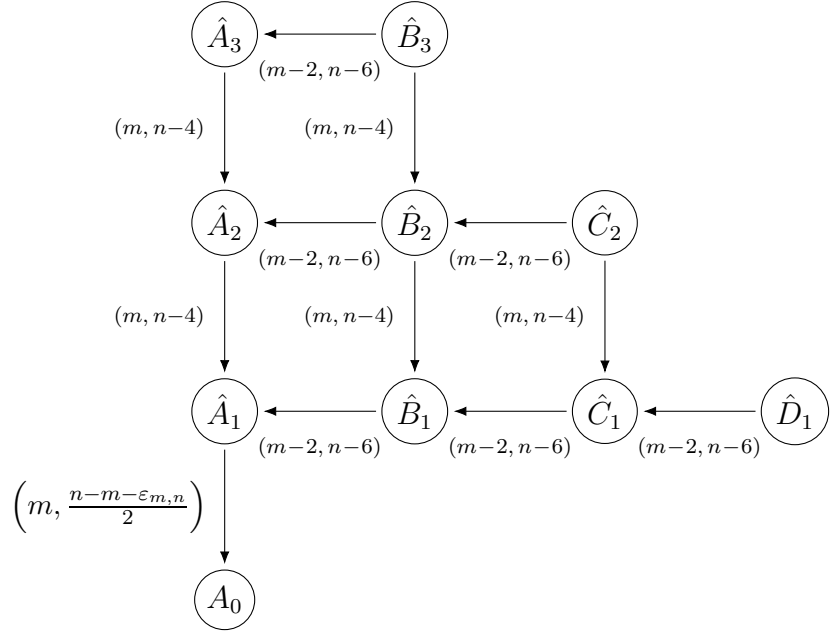

\begin{itemize}
\hatitem A1{
    For any ugly $(m,n)$ with $m\geq2$ and $n\geq3m-3$, we define $\hat A_1(m,n)$ with $k=1$ coordinate subvariety through
    \begin{align*}
        \tilde u_\emptyset &= m+1, & \tilde u_{\set1} &= 0,\\
        \tilde v_\emptyset &= \frac{n-m+2-\epsilon_{m,n}}{2}, & \tilde v_{\set1} &= \frac{n+m+\epsilon_{m,n}}{2},\\
        p_\emptyset &= \frac{mn+n+m^2+(1+\epsilon_{m,n})m+\epsilon_{m,n}-2}4, & p_{\set1} &= \ups\zav{m, {\textstyle\frac{n-m-\epsilon_{m,n}}{2}}} =\\&&&= \frac{mn+n-3m^2+(1-\epsilon_{m,n})m+8-\epsilon_{m,n}}4
    \end{align*}
    This is an $\zav{m,\frac{n-m-\epsilon_{m,n}}2}$-inductant of $A_0$ and comparing virtual dimensions in the ugly and nice cases of $A_0$, we get
    \begin{multline*}
        \vdim \hat A_1(m,n) = \vdim A_0(m,n) - \vdim A_0\zav{m,\frac{n-m-\epsilon_{m,n}}2} =\\
        =3\binom{m+1}3-\frac{n+m+1}2 -3\binom{m+1}3+\frac{n+m+2-\epsilon_{m,n}}2= \frac{1-\epsilon_{m,n}}{2},
    \end{multline*}
    i.e. $0$ for $n-m\equiv1\pmod4$ and $-1$ for $n-m\equiv3\pmod4$. In particular, $\hat A_1(m,n)$ is always superabundant and its virtual dimension only depends on $n-m\pmod4$.
}

\hatitem A2{
    For any ugly $(m,n)$ with $m\geq2$ and $n\geq 3m+1$, we define $\hat A_2(m,n)$ with $k=2$ coordinate subvarieties through
    \begin{align*}
        \tilde u_{\emptyset} &= m+1, &\tilde u_I &= 0\quad\text{for all other $I$,}\\
        \tilde v_\emptyset &= \frac{n-m-2-\epsilon_{m,n}}{2}, & \tilde v_{\set1} &= \frac{n+m-4+\epsilon_{m,n}}2,\\
        \tilde v_{\set2} &= \tilde v_{\set{1,2}} = 2, \span\omit\\
        p_\emptyset &= p_{\set1} = m+1, & p_{\set{2}} &= \frac{mn+n+m^2+(\epsilon_{m,n}-3)m+\epsilon_{m,n}-6}{4},\\
        &&p_{\set{1,2}} &= \ups\zav{m,\frac{n-m-4-\epsilon_{m,n}}{2}} =\\
        &&&= \frac{mn+n-3m^2-(3+\epsilon_{m,n})m+4-\epsilon_{m,n}}{4}.
    \end{align*}
    This is an $(m,n-4)$-inductant of $\hat A_1(m,n)$ and, since the virtual dimension of $\hat A_1(m,n)$ only depends on $n-m\pmod4$, we see that $\hat A_2(m,n)$ will always be equiabundant. Further, all inductants constructed from it will also be equiabundant.
}

\hatitem A3{
    For any ugly $(m,n)$ with $m\geq2$ and $n\geq 3m+5$, we define $\hat A_3(m,n)$ with $k=3$ coordinate subvarieties through
    \begin{align*}
        \tilde u_{\emptyset} &= m+1, &\tilde u_I &= 0\quad\text{for all other $I$,}\\
        \tilde v_\emptyset &= \frac{n-m-6-\epsilon_{m,n}}2, & \tilde v_{\set1} &= \frac{n+m-8+\epsilon_{m,n}}2,\\
        \tilde v_{\set2} &= \tilde v_{\set{1,2}} = \tilde v_{\set3} = \tilde v_{\set{1,3}} = 2, & \tilde v_{\set{2,3}} &= \tilde v_{\set{1,2,3}} = 0,\\
        p_{\set2} &= p_{\set{1,2}} = p_{\set3} = p_{\set{1,3}} = m+1, & p_I &= 0\quad\text{for all other $I$}.
    \end{align*}
    This is an $(m,n-4)$-inductant of $\hat A_2$; note that $p_{\set{2,3}} = p_{\set{1,2,3}} = 0$ is due to erasure of irrelevant points.
}

\hatitem B1{
    For any ugly $(m,n)$ with $m\geq4$ and $n\geq 3m-3$, we define $\hat B_1(m,n)$ with $k=2$ coordinate subvarieties through
    \begin{align*}
        \tilde u_\emptyset &= m-1, & \tilde u_{\set2} &= 2,\\
        \tilde u_{\set1} &= \tilde u_{\set{1,2}} = 0,\span\omit\\
        \tilde v_\emptyset &= \frac{n-m-2-\epsilon_{m,n}}2, & \tilde v_{\set1} &= \frac{n+m-8+\epsilon_{m,n}}2,\\
        \tilde v_{\set2} &= 2, & \tilde v_{\set{1,2}} &= 4,\\
        p_\emptyset &= \frac{n+5m+\epsilon_{m,n}-4}2, & p_{\set1} &= \frac{n-3m+4-\epsilon_{m,n}}2,\\
        p_{\set2} &= \frac{mn-n+m^2+(\epsilon_{m,n}-9)m+6-\epsilon_{m,n}}4, & p_{\set{1,2}} &= \ups\zav{m-2,\frac{n-m-4-\epsilon_{m,n}}2} =\\&&&= \frac{mn-n-3m^2+(7-\epsilon_{m,n})m+\epsilon_{m,n}}4.
    \end{align*}
    This is an $(m-2,n-6)$-inductant of $\hat A_1(m,n)$; in particular, since the virtual dimension of $\hat A_1(m,n)$ only depends on $n-m\pmod4$, this means $\hat B_1(m,n)$, as well as all families constructed from it as inductants, will always be equiabundant.
}

\hatitem B2{
    For any ugly $(m,n)$ with $m\geq4$ and $n\geq 3m+1$, we define $\hat B_2(m,n)$ with $k=3$ coordinate subvarieties through
    \begin{align*}
        \tilde u_\emptyset &= m-1, & \tilde u_{\set2} &= 2,\\
        \tilde u_{I} &= 0\quad\text{for all other $I$},\span\omit\\
        \tilde v_\emptyset &= \frac{n-m-6-\epsilon_{m,n}}2, & \tilde v_{\set1} &= \frac{n+m-12+\epsilon_{m,n}}2,\\
        \tilde v_{\set2} &= \tilde v_{\set3} = \tilde v_{\set{1,3}} = 2, & \tilde v_{\set{1,2}} &= 4,\\
        \tilde v_{\set{2,3}} &= \tilde v_{\set{1,2,3}} = 0,\span\omit\\
        p_\emptyset &= p_{\set1} = 2, & p_{\set2} &= p_{\set{1,2}} = m-1,\\
        p_{\set3} &= \frac{n+5m+\epsilon_{m,n}-8}2, & p_{\set{1,3}} &= \frac{n-3m-\epsilon_{m,n}}2,\\
        p_{\set{2,3}} &= p_{\set{1,2,3}} = 0.\span\omit
    \end{align*}
    This is an $(m,n-4)$-inductant of $\hat B_1$, and by Remark~\ref{rmrk:commuting-inductants}, also an $(m-2,n-6)$-inductant of $\hat A_2$. Note that $p_{\set{2,3}} = p_{\set{1,2,3}} = 0$ is due to erasure of irrelevant points.
}

\hatitem B3{
    For any ugly $(m,n)$ with $m\geq4$ and $n\geq 3m+5$, we define $\hat B_3(m,n)$ with $k=4$ subvarieties through
    \begin{align*}
        \tilde u_\emptyset &= m-1, & \tilde u_{\set2} &= 2,\\
        \tilde u_{I} &= 0\quad\text{for all other $I$},\span\omit\\
        \tilde v_\emptyset &= \frac{n-m-10-\epsilon_{m,n}}2, & \tilde v_{\set1} &= \frac{n+m-16+\epsilon_{m,n}}2,\\
        \tilde v_{\set2} &= \tilde v_{\set3} = \tilde v_{\set{1,3}} = \tilde v_{\set4} = \tilde v_{\set{1,4}} = 2,\span\omit\\
        \tilde v_{\set{1,2}} &= 4, & \tilde v_I &= 0\quad\text{for all other $I$,}\\
        p_{\set3} &= p_{\set{1,3}} = p_{\set4} = p_{\set{1,4}} = 2, & p_I &= 0\quad\text{for all other $I$.}
    \end{align*}
    This is an $(m,n-4)$-inductant of $\hat B_2$, and by Remark~\ref{rmrk:commuting-inductants}, also an $(m-2,n-6)$-inductant of $\hat A_3$. Note that $p_{\set{2,4}} = p_{\set{1,2,4}}=p_{\set{3,4}}=p_{1,3,4}=0$ is due to erasure of irrelevant points.
}

\hatitem C1{
    For any ugly $(m,n)$ with $m\geq6$ and $n\geq 3m-3$, we define $\hat C_1(m,n)$ with $k=3$ coordinate subvarieties through
    \begin{align*}
        \tilde u_\emptyset &= m-3, & \tilde u_{\set2} &= \tilde u_{\set3} = 2,\\
        \tilde u_I &= 0\quad\text{for all other $I$},\span\omit\\
        \tilde v_\emptyset &= \frac{n-m-6-\epsilon_{m,n}}2, & \tilde v_{\set1} &= \frac{n+m-16+\epsilon_{m,n}}{2},\\
        \tilde v_{\set2} &= \tilde v_{\set3} = 2, & \tilde v_{\set{1,2}} &= \tilde v_{\set{1,3}} = 4,\\
        \tilde v_{\set{2,3}} &= \tilde v_{\set{1,2,3}} = 0,\span\omit\\
        p_\emptyset &= 8, & p_{\set1} &= p_{\set{2,3}} = p_{\set{1,2,3}}=  0,\\
        p_{\set2} &= p_{\set3} = \frac{n+5m+\epsilon_{m,n}-20}2, & p_{\set{1,2}} &= p_{\set{1,3}} = \frac{n-3m+4-\epsilon_{m,n}}2.\\
    \end{align*}
    This is an $(m-2,n-6)$-inductant of $\hat B_1$. Note that $p_{\set{2,3}}=p_{\set{1,2,3}}=0$ is due to erasure of irrelevant points.
}

\hatitem C2{
    For any ugly $(m,n)$ with $m\geq6$ and $n\geq 3m+1$, we define $\hat C_2(m,n)$ with $k=4$ coordinate subvarieties through
    \begin{align*}
        \tilde u_\emptyset &= m-3, & \tilde u_{\set2} &= \tilde u_{\set3} = 2,\\
        \tilde u_I &= 0\quad\text{for all other $I$},\span\omit\\
        \tilde v_\emptyset &= \frac{n-m-10-\epsilon_{m,n}}2, & \tilde v_{\set1} &= \frac{n+m-20+\epsilon_{m,n}}{2},\\
        \tilde v_{\set2} &= \tilde v_{\set3} = \tilde v_{\set4} = \tilde v_{\set{1,4}} = 2, & \tilde v_{\set{1,2}} &= \tilde v_{\set{1,3}} = 4,\\
        \tilde v_I &= 0\quad\text{for all other $I$},\span\omit\\
        p_{\set2} &= p_{\set{3}} = p_{\set{1,2}} = p_{\set{1,3}} = 2, & p_{\set{4}} &= 8,\\
        p_I &= 0\quad\text{for all other $I$}.\span\omit
    \end{align*}
    This is an $(m,n-4)$-inductant of $\hat C_1$, and by Remark~\ref{rmrk:commuting-inductants}, also an $(m-2,n-6)$-inductant of $\hat B_1$. Note that $p_{\set{2,4}}=p_{\set{3,4}} = p_{\set{1,2,4}} = p_{\set{1,3,4}} = 0$ is due to erasure of irrelevant points.
}

\hatitem D1{
    For any ugly $(m,n)$ with $m\geq8$ and $n\geq 3m-3$, we define $\hat D_1(m,n)$ with $k=4$ coordinate subvarieties through
    \begin{align*}
        \tilde u_\emptyset &= m-5, & \tilde u_{\set2} &= \tilde u_{\set3} = \tilde u_{\set4} = 2,\\
        \tilde u_I &= 0\quad\text{for all other $I$},\span\omit\\
        \tilde v_\emptyset &= \frac{n-m-10-\epsilon_{m,n}}2, & \tilde v_{\set1} &= \frac{n+m-24+\epsilon_{m,n}}2,\\
        \tilde v_{\set2} &= \tilde v_{\set3} = \tilde v_{\set4} = 2, & \tilde v_{\set{1,2}} &= \tilde v_{\set{1,3}} = \tilde v_{\set{1,4}} = 4,\\
        \tilde v_I &= 0\quad\text{for all other $I$,}\span\omit\\
        p_{\set2} &= p_{\set3} = p_{\set4} = 8, & p_I &= 0\quad\text{for all other $I$.}
    \end{align*}
    This is an $(m-2,n-6)$-inductant of $\hat C_1$. Note that $p_{\set{2,4}} = p_{\set{3,4}} = p_{\set{1,2,4}} = p_{\set{1,3,4}} = 0$ is due to erasure of irrelevant points.
}
\end{itemize}
To see that each of these configurations satisfies the conditions \eqref{eq:coordsubvar-nonempty}, \eqref{eq:coordconfig-sanity}, note that the bounds listed for each are such that $\tilde u_\emptyset$ and $\tilde v_\emptyset$ will be positive, which we already saw is enough to ensure \eqref{eq:coordsubvar-nonempty}, \eqref{eq:coordconfig-sanity}.

\begin{prop}
    \label{prop:uglycertificates}
    The following are non-defective and superabundant (or even equiabundant):
    \begin{gather*}
        \hat A_1(4,9),\quad \hat A_1(4,11),\quad \hat A_2(2,7),\quad \hat A_2(2,9),\quad \hat A_3(2,11),\quad \hat A_3(2,13),\\
        \hat B_1(4,9),\quad \hat B_1(4,11),\quad \hat B_2(4,13),\quad \hat B_2(4,15),\quad \hat B_3(4,17),\quad \hat B_3(4,19),\\
        \hat C_1(6,15),\quad \hat C_1(6,17),\quad \hat C_2(6,19),\quad \hat C_2(6,21),\quad \hat D_1(8,21),\quad \hat D_1(8,23).
    \end{gather*}
\end{prop}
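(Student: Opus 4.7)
The plan is to verify each of the listed configurations computationally, in exactly the spirit of Proposition~\ref{prop:nicecertificates}: the statement concerns only finitely many configurations at explicit values of $(m,n)$, and for each one the quantity to be computed is the dimension of $\mathcal I_Z(1,2)$, which reduces to a rank computation on a finite-dimensional linear system whose size is bounded by $(m+1)\binom{n+2}{2}$.

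For a single configuration $Z=Z(\set{p_I},\set{\tilde u_I},\set{\tilde v_I})$, the first step is to realize $\mathcal I_L(1,2)$ explicitly as a monomial subspace of $R(1,2)$: its basis is obtained from $\mathcal I_L=\bigcap_t \mathcal I_{L_t}$, with each $\mathcal I_{L_t}$ a very transparent monomial ideal in the chosen $x_i$'s and $y_j$'s. Having fixed such a basis, each double point in $Z$ constrained to $\bigcap_{t\in I} L_t$ and placed at a random point $P$ contributes a block of linear conditions, obtained by evaluating the relevant partial derivatives at $P$ — these are exactly the $\min\{u_I+v_I,m+n+1\}$ partials with respect to coordinates that do vanish on $\bigcap_{t\in I}L_t$, per the discussion following Definition~\ref{def:virtdim}. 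Assembling all such blocks into one matrix, one computes its rank and compares $\dim \mathcal I_Z(1,2)$ with the predicted $\max\set{0,\vdim Z}$.

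The key observation making this a rigorous proof of \emph{generic} non-defectivity is upper semicontinuity: if for one specific choice of coordinates of the double points the dimension of $\mathcal I_Z(1,2)$ equals $\max\set{0,\vdim Z}$, then the same equality holds on an open and hence dense subset, which is exactly what non-defectivity asserts. To keep the arithmetic manageable one may choose coordinates in a finite field $\F_p$ for some large prime $p$; the only risk from reduction mod $p$ is a dimension which is too large by a coincidence, so a single successful computation suffices and failures can simply be retried. The abundancy claim is immediate from the closed-form expressions for $\vdim Z$ that were recorded alongside the definition of each family.

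The main obstacle is scale rather than any conceptual difficulty: the largest case in the list, $\hat D_1(8,23)$, has ambient dimension $(m+1)\binom{n+2}{2}=9\cdot 300=2700$, and the number of double points times the conditions per point yields a matrix of comparable size, so the computation must be arranged with some care to stay tractable. This is precisely what Section~\ref{sec:computations} is devoted to, and the same framework developed there for Proposition~\ref{prop:nicecertificates} applies verbatim; the proof therefore consists in running that framework on each of the configurations listed in the statement and inspecting the resulting certificates in the repository.
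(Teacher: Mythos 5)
Your proposal matches the paper's proof exactly: the paper verifies each listed configuration computationally via the rank-computation-over-$\mathbb F_p$ framework of Section~\ref{sec:computations}, with upper semicontinuity justifying why a single successful random trial certifies generic non-defectivity. The only minor discrepancy is that the paper actually used the small prime $p=127$ (a Mersenne prime chosen for memory efficiency) rather than a large one, but this does not affect correctness since false negatives are simply retried.
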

\begin{proof}
    Verified computationally (see Section~\ref{sec:computations}).
\end{proof}

As in the previous subsection, we go through the diagram of Figure~\ref{fig:uglystrategy} in reverse order by using induction steps facilitated by non-defectivity of inductants and the base cases of Proposition~\ref{prop:uglycertificates}

\begin{prop}
    \label{prop:D1hat}
    For any ugly $(m,n)$ with $m\geq 8$, $n\geq 3m-3$, the configuration $\hat D_1(m,n)$ is equiabundant and non-defective.
\end{prop}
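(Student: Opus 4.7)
The plan is to mimic the template established in Propositions~\ref{prop:G0}, \ref{prop:F0}, \ref{prop:E1}: build two auxiliary inductants of $\hat D_1$ that trivialize, extract from them induction steps in $n$ and in $m$, and then chain these induction steps starting from the base cases in Proposition~\ref{prop:uglycertificates}. Concretely, let $S$ be the $(m, n-4)$-inductant of $\hat D_1$ and $S'$ be the $(m-2, n-6)$-inductant of $\hat D_1$. Both step sizes preserve $n-m\bmod 4$, so $\epsilon_{m,n}$ matches across, and the formulas for $\hat D_1(m,n)$ align correctly with those for $\hat D_1(m,n-4)$ and $\hat D_1(m-2,n-6)$ under the relations~\eqref{eq:inductant-construction}.

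The first reduction is to observe that in both $S$ and $S'$ all double points disappear. Indeed, every $p_I$ in $\hat D_1$ is either $0$ or the constant $8$, so $p'_I(m,n) = 0$ for $I\not\ni 5$, and the transferred values $p'_{I\cup\set{5}}$ land on intersections $L_k\cap L_5$ for $k\in\set{2,3,4}$. A direct application of~\eqref{eq:inductant-construction} to the parameters of $\hat D_1$ shows $u'_{\set{k,5}} = v'_{\set{k,5}} = 0$ in each case, making these intersections irrelevant and all the corresponding points erased.

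The main obstacle is verifying $\mathcal I_{S(m,n)}(1,2) = \mathcal I_{S'(m,n)}(1,2) = 0$, i.e.\ that no bidegree $(1,2)$ monomial $x_i y_{j_1} y_{j_2}$ vanishes on the five coordinate subvarieties of $S$ or $S'$. Computing the relevant $\tilde u'_I, \tilde v'_I$ from~\eqref{eq:inductant-construction}, the only $2$-element index sets $J\subseteq\set{1,\dots,5}$ with $\tilde v'_J>0$ turn out to be $\set{1,2},\set{1,3},\set{1,4},\set{1,5}$, each of which contains $1$. Hence a single $y_j$ vanishes on at most two of the subvarieties, and when it does vanish on two, one of them is necessarily $L_1$; so the two $y$-factors together cover at most three subvarieties, of the form $\set{L_1, L_{k_1}, L_{k_2}}$. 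Meanwhile, the groups $\tilde u_{\set{k}}=2$ for $k\in\set{2,3,4}$ (and additionally $\tilde u'_{\set5}=2$ in $S'$) are pairwise disjoint, so $x_i$ vanishes on at most one of $L_2, L_3, L_4$ (respectively, $L_2, L_3, L_4, L_5$) and never on $L_1$. Altogether $x_i y_{j_1} y_{j_2}$ can vanish on at most four of the five subvarieties, yielding the desired $\mathcal I(1,2) = 0$; equiabundancy of $S$ and $S'$ follows from Lemma~\ref{lem:defect-additivity} since $\hat D_1$ inherits equiabundancy from $\hat A_1$ along the inductant chain (as noted in the definition of $\hat B_1$).

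It then remains to string the base cases and induction steps together. Lemma~\ref{lem:inductant-induction} gives the implications $\hat D_1(m, n-4)\Rightarrow \hat D_1(m,n)$ for $m\geq 8$, $n\geq 3m+1$ and $\hat D_1(m-2, n-6) \Rightarrow \hat D_1(m,n)$ for $m\geq 10$, $n\geq 3m-3$. Proposition~\ref{prop:uglycertificates} provides the base cases $\hat D_1(8,21)$ and $\hat D_1(8,23)$, with $21 = 3\cdot 8 - 3$ and the two cases covering the two residues of $n-m\bmod 4$; since $3(m+2)-3 = (3m-3)+6$, the $n$-induction first fills in all ugly $(8,n)$ with $n\geq 21$, and then the $m$-induction extends this to all ugly $(m,n)$ with $m\geq 8$ and $n\geq 3m-3$.
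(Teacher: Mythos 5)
Your proof is correct and follows essentially the same strategy as the paper: build the $(m,n-4)$- and $(m-2,n-6)$-inductants of $\hat D_1$, show that all points become irrelevant and that the ideal of the union of subvarieties vanishes in bidegree $(1,2)$, then chain the resulting induction steps from the two base cases $\hat D_1(8,21)$ and $\hat D_1(8,23)$. The only cosmetic difference is in the monomial argument: you count the subvarieties a given $x_i y_{j_1} y_{j_2}$ can cover and conclude it is at most four of five, whereas the paper observes directly that $L_2,L_3,L_4,L_5$ share no equations, so any monomial vanishing on all four would need degree at least four; both observations yield $\mathcal I(1,2)=0$.
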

\begin{proof}
Let $S$ be the $(m,n-4)$-inductant of $\hat D_1$, then it is easy to see that all points become irrelevant in this construction. Further, $S$ has $5$ coordinate subvarieties and $4$ of them do not share any equations between themselves. Hence any monomial that vanishes on all four of them must have degree at least $4$; in particular, no bidegree $(1,2)$ monomials vanish on them. Hence $S$ is always trivially equiabundant and non-defective, establishing the induction step
\[
    \text{$\hat D_1(m,n-4)$ non-defective} \implies \text{$\hat D_1(m,n)$ non-defective}.
\]

Similarly, let $S'$ be the $(m-2,n-6)$-inductant of $\hat D_1$. Through essentially the same argumentation, it is also trivially equiabundant and non-defective, which gives the induction
\[
    \text{$\hat D_1(m-2,n-6)$ non-defective} \implies \text{$\hat D_1(m,n)$ non-defective}.
\]

Applying these two inductions with the base cases $\hat D_1(8,21)$ and $\hat D_1(8,23)$, which we have by Proposition~\ref{prop:uglycertificates}, we obtain non-defectivity of all $\hat D_1(m,n)$ with $(m,n)$ ugly and $m\geq8$, $n\geq 3m-3$.
\end{proof}

\begin{prop}
    \label{prop:C2hat}
    For any ugly $(m,n)$ with $m\geq 6$, $n\geq 3m+1$, the configuration $\hat C_2(m,n)$ is equiabundant and non-defective.
\end{prop}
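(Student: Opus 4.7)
The plan is to follow the same template as Propositions~\ref{prop:D1hat} and~\ref{prop:C1}: I will set up two induction steps for $\hat C_2$ via Lemma~\ref{lem:inductant-induction}, and then close the argument using the base cases supplied by Proposition~\ref{prop:uglycertificates}.

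First I will construct the $(m,n-4)$-inductant $S$ of $\hat C_2$ and show it is trivially equiabundant and non-defective. Since every $p_I$ in $\hat C_2$ is constant in $n$ and $p_\emptyset = 0$, every double point is specialized to the new subvariety $L_5$; moreover, for each $I$ with $p_I > 0$ I will check that both $u_I$ and $v_I$ in $\hat C_2$ are independent of $n$ (only $\tilde v_\emptyset$ and $\tilde v_{\set 1}$ vary with $n$, and none of the relevant index sets $\set 2, \set 3, \set{1,2}, \set{1,3}, \set 4$ is contained in $\emptyset$ or $\set 1$), so by the relations~\eqref{eq:inductant-construction} we get $u'_{I \cup \set 5} + v'_{I \cup \set 5} = 0$ and all specialized points become irrelevant. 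Next I rule out bidegree $(1,2)$ monomials $x_i y_{j_1} y_{j_2}$ in the ideal of the five subvarieties of $S$: since $\tilde v'_{\set{4,5}} = 0$ in $S$, vanishing on both $L_4$ and $L_5$ forces the two $y$-factors into disjoint sets of $y$-equations coming from $L_5$ and $L_4$ respectively; consequently vanishing on $L_2$ (whose $y$-equations are disjoint from those of $L_4$ and $L_5$, again by suitable $\tilde v'_J = 0$'s) forces $x_i$ into the two $x$-equations of $L_2$, and analogously into those of $L_3$, which is impossible since $\tilde u_{\set{2,3}} = 0$. This yields the induction step $\hat C_2(m,n-4)$ non-defective $\Rightarrow \hat C_2(m,n)$ non-defective.

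For the second induction step, I will consider the $(m-2,n-6)$-inductant $S'$ of $\hat C_2$. The key observation is that by Remark~\ref{rmrk:commuting-inductants}, since $(m,n-4)$ and $(m-2,n-6)$ commute as affine maps on $\zet^2$ and both $\hat C_2$ and $\hat D_1$ arise as inductants of $\hat C_1$ along these two directions, $S'$ coincides with the $(m,n-4)$-inductant of $\hat D_1$ -- a family already shown in the proof of Proposition~\ref{prop:D1hat} to be trivially equiabundant and non-defective wherever it is defined. Fed with this and the inductive hypothesis that $\hat C_2(m-2,n-6)$ is non-defective, Lemma~\ref{lem:inductant-induction} then gives the second induction step $\hat C_2(m-2,n-6)$ non-defective $\Rightarrow \hat C_2(m,n)$ non-defective, valid for $m \geq 8$, $n \geq 3m+1$.

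Finally, I will combine these two induction steps with the base cases $\hat C_2(6,19)$ and $\hat C_2(6,21)$ from Proposition~\ref{prop:uglycertificates}. Since $3m+1$ evaluates to $19$ at $m=6$, and both induction steps preserve the value of $n - m \pmod 4$, these two base cases -- one for each of the two residue classes covered by $\hat C_2$ -- propagate to all ugly $(m,n)$ with $m \geq 6$ and $n \geq 3m+1$. The main potential obstacle is the combinatorial monomial argument in the first step, where several $\tilde v'_J = 0$ identities must be cleanly combined to force the required disjointness of variable sets; everything else is essentially bookkeeping, relying on the commuting inductants remark to reuse the work already done for $\hat D_1$.
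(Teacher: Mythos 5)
Your proposal is correct and follows essentially the same route as the paper: a trivial $(m,n-4)$-inductant for the vertical step, the commuting-inductants remark identifying the $(m-2,n-6)$-inductant of $\hat C_2$ with the already-handled $(m,n-4)$-inductant of $\hat D_1$ for the horizontal step, and the two base cases $\hat C_2(6,19)$, $\hat C_2(6,21)$. Your spelled-out monomial argument is an unpacking of the paper's terser remark that the four subvarieties $L_2,L_3,L_4,L_5$ share no equations, so no bidegree $(1,2)$ monomial (which involves only three variables) can vanish on all of them.
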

\begin{proof}
    The $(m,n-4)$-inductant of $\hat C_2$ will have all its points erased due to being irrelevant and it will admit no bidegree $(1,2)$ monomials vanishing on its subvarieties, since $4$ of them share no equations. This establishes the induction
    \[
    \text{$\hat C_2(m,n-4)$ non-defective} \implies \text{$\hat C_2(m,n)$ non-defective}.
    \]
    Further, the $(m-2,n-6)$-inductant of $\hat C_2$ is the same as the $(m,n-4)$-inductant of $\hat D_1$ due to Remark~\ref{rmrk:commuting-inductants}, which we already investigated in the proof of the previous proposition, hence establishing
    \[
    \text{$\hat C_2(m-2,n-6)$ non-defective} \implies \text{$\hat C_2(m,n)$ non-defective}.
    \]
    Applying these two to $\hat C_2(6,19)$ and $\hat C_2(6,21)$, which are non-defective by Proposition~\ref{prop:uglycertificates}, we obtain the desired result.
\end{proof}

\begin{prop}
    \label{prop:C1hat}
    For any ugly $(m,n)$ with $m\geq 6$, $n\geq 3m-3$, the configuration $\hat C_1(m,n)$ is equiabundant and non-defective.
\end{prop}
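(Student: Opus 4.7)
The plan is to follow the template established in Propositions~\ref{prop:D1hat} and \ref{prop:C2hat}: produce two induction steps from the inductants of $\hat C_1$ visible in Figure~\ref{fig:uglystrategy} and seed them with base cases from Proposition~\ref{prop:uglycertificates}. First, since $\hat C_2$ is the $(m,n-4)$-inductant of $\hat C_1$, Proposition~\ref{prop:C2hat} combined with Lemma~\ref{lem:inductant-induction} yields
\[
    \text{$\hat C_1(m,n-4)$ non-defective} \implies \text{$\hat C_1(m,n)$ non-defective}
\]
for ugly $(m,n)$ with $m\geq 6$, $n\geq 3m+1$. Similarly, since $\hat D_1$ is the $(m-2,n-6)$-inductant of $\hat C_1$, Proposition~\ref{prop:D1hat} combined with Lemma~\ref{lem:inductant-induction} gives
\[
    \text{$\hat C_1(m-2,n-6)$ non-defective} \implies \text{$\hat C_1(m,n)$ non-defective}
\]
for ugly $(m,n)$ with $m\geq 8$, $n\geq 3m-3$. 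Equiabundancy of $\hat C_1$ propagates automatically from $\hat A_1$ through the inductant chain, as already recorded in the definition of $\hat B_1$, so I do not need to re-verify it at each step.

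To close the induction, I would take the base cases $\hat C_1(6,15)$ and $\hat C_1(6,17)$ from Proposition~\ref{prop:uglycertificates}; note that $3m-3$ evaluates to $15$ at $m=6$. Since $(m,n)$ ugly forces $n-m$ odd, these two cases realize the two possible residues of $n-m$ modulo $4$ at $m=6$ (namely $9\equiv 1$ and $11\equiv 3$), which matters because both induction steps preserve $n-m\bmod 4$. The $(m,n-4)$-step then extends the two base cases to all ugly $\hat C_1(6,n)$ with $n\geq 15$, and the $(m-2,n-6)$-step propagates this to all ugly $(m,n)$ with $m\geq 6$, $n\geq 3m-3$.

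No genuine obstacle is expected in this argument: it is essentially routine bookkeeping on the diagram in Figure~\ref{fig:uglystrategy}. The only thing demanding care is verifying that the bounds $n\geq 3m+1$ (for invoking Proposition~\ref{prop:C2hat}) and $m\geq 8$, $n\geq 3m-3$ (for invoking Proposition~\ref{prop:D1hat}) hold at every stage of the induction, but this is the same kind of minor arithmetic check already performed in the preceding propositions.
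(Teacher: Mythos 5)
Your proposal is correct and matches the paper's proof exactly: the same two induction steps coming from Propositions~\ref{prop:C2hat} and \ref{prop:D1hat} via Lemma~\ref{lem:inductant-induction}, seeded by the same base cases $\hat C_1(6,15)$ and $\hat C_1(6,17)$ from Proposition~\ref{prop:uglycertificates}. Your explicit remark on the two residue classes of $n-m$ modulo $4$ is a helpful clarification that the paper leaves implicit, and you even silently correct a citation slip in the paper, which mistakenly attributes the base cases to Proposition~\ref{prop:C1hat} instead of Proposition~\ref{prop:uglycertificates}.
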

\begin{proof}
    By Proposition~\ref{prop:C2hat} and \ref{prop:D1hat}, we have induction steps
    \begin{align*}
        \text{$\hat C_1(m,n-4)$ non-defective} &\implies \text{$\hat C_1(m,n)$ non-defective}\quad\text{(for $m\geq 6$, $n\geq 3m+1$)},\\
        \text{$\hat C_1(m-2,n-6)$ non-defective} &\implies \text{$\hat C_1(m,n)$ non-defective}\quad\text{(for $m\geq 8$, $n\geq 3m-3$)},
    \end{align*}
    for ugly $(m,n)$.
    Applying these to base cases $\hat C_1(6,15)$ and $\hat C_1(6,17)$, which are non-defective by Proposition~\ref{prop:C1hat}, then obtains the desired result.
\end{proof}

\begin{prop}
    \label{prop:B3hat}
    For any ugly $(m,n)$ with $m\geq 4$, $n\geq 3m+5$, the configuration $\hat B_3(m,n)$ is equiabundant and non-defective.
\end{prop}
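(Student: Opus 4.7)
The plan is to follow the template already set by Propositions \ref{prop:D1hat}, \ref{prop:C2hat}, and \ref{prop:C1hat}: I would construct two ad-hoc inductants of $\hat B_3$ itself, show that each becomes trivially equiabundant and non-defective, and then combine the resulting induction steps with the base cases from Proposition \ref{prop:uglycertificates}. Since $3m+5$ evaluates to $17$ at $m=4$ and increases by $6$ when $m$ increases by $2$, the two base cases $\hat B_3(4,17)$ and $\hat B_3(4,19)$ should propagate to cover every ugly $(m,n)$ with $m\geq4$ and $n\geq 3m+5$.

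First I would consider $S$, the $(m,n-4)$-inductant of $\hat B_3$. Because every $p_I$ in $\hat B_3$ is independent of $n$ and $p_\emptyset=0$, all points get specialized onto the new subvariety $L_5$; a direct application of relations~\eqref{eq:inductant-construction} to the various $p_I$ with $I\subseteq\set{3,4}\cup\set{1,3,4}$ should show that each specialized point lands in a cell where $u'_{I\cup\set5}+v'_{I\cup\set5}=0$, making them irrelevant and erasing them. For the monomial ideal, I would zoom in on the three subvarieties $L_3$, $L_4$ and the new $L_5$: each carries only $y$-equations, and since $\tilde v_J=0$ for every $J$ containing any two of $\set{3,4,5}$, no single variable $y_j$ vanishes on two of them simultaneously. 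A bidegree $(1,2)$ monomial has only two $y$-factors, so it cannot vanish on all three, hence not on all five subvarieties either. This gives the induction step $\hat B_3(m,n-4)\Rightarrow\hat B_3(m,n)$ via Lemma~\ref{lem:inductant-induction}.

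Second, I would repeat the same analysis for $S'$, the $(m-2,n-6)$-inductant of $\hat B_3$. The erasure-of-points argument is identical. The ideal argument is slightly more delicate because the inductant now also shifts $\tilde u_\emptyset$ from $m-1$ to $m-3$, so the new $L_5$ carries two $x$-equations in addition to its $y$-equations; however, since $\tilde u_{\set{2,5}}=0$, no single $x_i$ vanishes on both $L_2$ and $L_5$, so the unique $x$-factor of a bidegree $(1,2)$ monomial cannot handle both. The same pigeonhole argument on $L_3,L_4,L_5$ then rules out any surviving monomial, yielding $\hat B_3(m-2,n-6)\Rightarrow\hat B_3(m,n)$.

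The main obstacle is purely bookkeeping: verifying that the two inductant constructions truly wipe out \emph{every} point of $\hat B_3$ via irrelevance, and confirming the non-shared-equations claims $\tilde v_{\set{3,4}}=\tilde v_{\set{3,5}}=\tilde v_{\set{4,5}}=0$ and $\tilde u_{\set{2,5}}=0$ at the level of the parameters listed for $\hat B_3$. Once these checks are done, the rest is a mechanical repetition of the now-familiar triviality template, and the combination of the two induction steps with the base cases completes the proof.
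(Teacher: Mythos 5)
Your proposal is correct and its first half coincides with the paper's proof: for the $(m,n-4)$-inductant $S$ of $\hat B_3$, the paper likewise notes that all points become irrelevant (because the relevant $u_I$, $v_I$ are constant, so the specialized cells have $u'_{I\cup\set5}+v'_{I\cup\set5}=0$) and that no bidegree $(1,2)$ monomial vanishes on all five subvarieties, though the paper phrases the latter as ``$4$ of the $5$ subvarieties share no equations'' --- i.e.\ $L_2,L_3,L_4,L_5$ pairwise share no equations, so the $3$ factors of a bidegree $(1,2)$ monomial cannot cover all $4$ --- which subsumes your $L_3,L_4,L_5$ argument.

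For the second induction step, you take a genuinely different (and perfectly valid) route. The paper invokes Remark~\ref{rmrk:commuting-inductants}: since $\hat B_3$ is the $(m,n-4)$-inductant of $\hat B_2$ and $\hat C_2$ is the $(m-2,n-6)$-inductant of $\hat B_2$, and $(m,n-4)$ and $(m-2,n-6)$ commute, the $(m-2,n-6)$-inductant of $\hat B_3$ coincides (up to relabeling) with the $(m,n-4)$-inductant of $\hat C_2$, whose triviality was already established in the proof of Proposition~\ref{prop:C2hat}. You instead compute the $(m-2,n-6)$-inductant $S'$ of $\hat B_3$ from scratch; this is more self-contained but also more work, which is exactly the kind of repetition the commutativity remark is designed to avoid. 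One small imprecision in your write-up of $S'$: after saying the $x$-factor cannot handle both $L_2$ and $L_5$, ``the same pigeonhole argument on $L_3,L_4,L_5$'' is not quite literally right, because $L_5$ now carries $x$-equations, so if the $x$-factor is spent on $L_5$ you would need the $y$-factors to cover $L_2,L_3,L_4$, not $L_3,L_4,L_5$. The clean unified statement, which covers both cases of where the $x$-factor lands, is again that $L_2,L_3,L_4,L_5$ pairwise share no equations (all $\tilde u_I,\tilde v_I$ with $\abs{I\cap\set{2,3,4,5}}\geq2$ vanish), so $3$ monomial factors cannot vanish on all $4$. Your overall conclusion and base-case bookkeeping are correct.
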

\begin{proof}
    Let $S$ be the $(m,n-4)$-inductant of $\hat B_3$. All of its points become irrelevant in the inductant construction. Further, its subvarieties admit no bidegree $(1,2)$ monomials to vanish on them, because $4$ of the $5$ subvarieties share no equations. Therefore $S$ is trivially equiabundant and non-defective, establishing the induction
    \[
    \text{$\hat B_3(m,n-4)$ non-defective} \implies \text{$\hat B_3(m,n)$ non-defective}.
    \]
    On the other hand, the $(m-2,n-6)$-inductant of $\hat B_3$ is the same as the $(m,n-4)$-inductant of $\hat C_2$ (by Remark~\ref{rmrk:commuting-inductants}) which we have already investigated, giving us the induction step
    \[
    \text{$\hat B_3(m-2,n-6)$ non-defective} \implies \text{$\hat B_3(m,n)$ non-defective}.
    \]
    Applying these induction steps to base cases $\hat B_3(4,17)$ and $\hat B_3(4,19)$, which are non-defective by Proposition~\ref{prop:uglycertificates}, then yields the desired result.
\end{proof}

\begin{prop}
    \label{prop:B2hat}
    For any ugly $(m,n)$ with $m\geq 4$, $n\geq 3m+1$, the configuration $\hat B_2(m,n)$ is equiabundant and non-defective.
\end{prop}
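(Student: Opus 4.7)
The plan is to follow the exact same template used for Propositions~\ref{prop:C1hat}, \ref{prop:B3hat}, and the other propositions in this subsection. Inspecting Figure~\ref{fig:uglystrategy}, we see that the $(m,n-4)$-inductant of $\hat B_2$ is $\hat B_3$, and the $(m-2,n-6)$-inductant of $\hat B_2$ is $\hat C_2$; both of these families have already been proved equiabundant and non-defective on their respective domains, by Propositions~\ref{prop:B3hat} and \ref{prop:C2hat}.

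Given this, Lemma~\ref{lem:inductant-induction} will immediately supply two induction steps for ugly $(m,n)$:
\begin{gather*}
    \text{$\hat B_2(m,n-4)$ non-defective} \implies \text{$\hat B_2(m,n)$ non-defective}\quad\text{(for $m\geq4$, $n\geq 3m+5$),}\\
    \text{$\hat B_2(m-2,n-6)$ non-defective} \implies \text{$\hat B_2(m,n)$ non-defective}\quad\text{(for $m\geq6$, $n\geq 3m+1$).}
\end{gather*}
For the base cases, I would use $\hat B_2(4,13)$ and $\hat B_2(4,15)$, which are non-defective by Proposition~\ref{prop:uglycertificates}; note that $3m+1$ evaluates to $13$ at $m=4$, and that these two base cases realize both residue classes of $n-m \pmod 4$, a quantity preserved by both induction steps.

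Starting from these base cases, the $(m,n-4)$ step extends non-defectivity to all ugly $\hat B_2(4,n)$ with $n\geq 13$, and the $(m-2,n-6)$ step then bootstraps in $m$: for any ugly $(m,n)$ with $m\geq 6$ and $n\geq 3m+1$, the pair $(m-2,n-6)$ satisfies $m-2\geq 4$ and $n-6\geq 3(m-2)+1$ and is again ugly, so an induction on $m$ closes the argument. I do not anticipate any real obstacle: the substantive work has been done by Propositions~\ref{prop:B3hat}, \ref{prop:C2hat}, and \ref{prop:uglycertificates}; what remains is only the routine bookkeeping that the stated bounds on the inductant families are compatible and that the two steps together cover the entire claimed region.
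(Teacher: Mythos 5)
Your proposal is correct and matches the paper's proof essentially verbatim: the same two inductants ($\hat B_3$ as the $(m,n-4)$-inductant, $\hat C_2$ as the $(m-2,n-6)$-inductant), the same reliance on Propositions~\ref{prop:B3hat} and \ref{prop:C2hat} via Lemma~\ref{lem:inductant-induction}, and the same base cases $\hat B_2(4,13)$, $\hat B_2(4,15)$ from Proposition~\ref{prop:uglycertificates}. Your additional remark that both residue classes of $n-m \pmod 4$ are represented by the base cases is a correct and clarifying observation left implicit in the paper.
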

\begin{proof}
    By Propositions~\ref{prop:B3hat} and \ref{prop:C2hat}, we have induction steps
    \begin{align*}
        \text{$\hat B_2(m,n-4)$ non-defective} &\implies \text{$\hat B_2(m,n)$ non-defective}\quad\text{(for $m\geq 4$, $n\geq 3m+5$)},\\
        \text{$\hat B_2(m-2,n-6)$ non-defective} &\implies \text{$\hat B_2(m,n)$ non-defective}\quad\text{(for $m\geq 6$, $n\geq 3m+1$)},
    \end{align*}
    for ugly $(m,n)$.
    Applying these to base cases $\hat B_2(4,13)$ and $\hat B_2(4,15)$, which are non-defective by Proposition~\ref{prop:uglycertificates}, then yields the desired conclusion.
\end{proof}

\begin{prop}
    \label{prop:B1hat}
    For any ugly $(m,n)$ with $m\geq 4$, $n\geq 3m-3$, the configuration $\hat B_1(m,n)$ is equiabundant and non-defective.
\end{prop}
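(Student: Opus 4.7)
The plan is to follow the template established by the preceding propositions in this subsection, exploiting the two inductant arrows pointing into $\hat B_1$ in Figure~\ref{fig:uglystrategy}: namely, $\hat B_2$ is an $(m,n-4)$-inductant of $\hat B_1$ and $\hat C_1$ is an $(m-2,n-6)$-inductant of $\hat B_1$. Equiabundance of $\hat B_1(m,n)$ is already built into its definition (as $\hat B_1$ was introduced as an inductant of $\hat A_1$, whose virtual dimension depends only on $n-m\bmod 4$), so only non-defectivity needs to be established. Combining Propositions~\ref{prop:B2hat} and \ref{prop:C1hat} with Lemma~\ref{lem:inductant-induction} will yield two induction steps for ugly $(m,n)$:
\begin{align*}
    \text{$\hat B_1(m,n-4)$ non-defective} &\implies \text{$\hat B_1(m,n)$ non-defective} & &\text{(for $m\geq 4$, $n\geq 3m+1$),}\\
    \text{$\hat B_1(m-2,n-6)$ non-defective} &\implies \text{$\hat B_1(m,n)$ non-defective} & &\text{(for $m\geq 6$, $n\geq 3m-3$).}
\end{align*}

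Next I would invoke the base cases $\hat B_1(4,9)$ and $\hat B_1(4,11)$ supplied by Proposition~\ref{prop:uglycertificates}. Since $3m-3$ evaluates to $9$ at $m=4$, these two base cases lie exactly at the boundary of the target region and cover both ugly residue classes $n-m\equiv 1,3\pmod 4$. The first induction step, once applicable at $n\geq 13$, propagates each of them up in $n$ and thus extends them to all ugly $\hat B_1(4,n)$ with $n\geq 9$. The second step then transports non-defectivity from $m=4$ to $m=6$ and, iterated, to all larger even $m$; crucially, the shift $3m-3\to 3(m-2)-3$ differs by exactly $6$, matching the drop $n\to n-6$ imposed by the inductant, so the bound on $n$ is preserved under the step in $m$. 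The first step can then be used to fill in the remaining values of $n$ at each higher $m$.

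I do not expect any genuine obstacle here. The argument is the same combinatorial bookkeeping as in the preceding propositions: both induction steps preserve $n-m\bmod 4$, so the two base cases at $m=4$ cover the two ugly residue classes persistent throughout the induction, and the ranges of validity of the two steps overlap suitably to cover the whole target region $m\geq 4$, $n\geq 3m-3$ without leaving any ugly $(m,n)$ behind.
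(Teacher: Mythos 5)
Your proposal is correct and follows essentially the same route as the paper: both exploit the $(m,n-4)$-inductant step (via Proposition~\ref{prop:B2hat}, valid for $m\geq4$, $n\geq3m+1$) and the $(m-2,n-6)$-inductant step (via Proposition~\ref{prop:C1hat}, valid for $m\geq6$, $n\geq3m-3$), seeded by the base cases $\hat B_1(4,9)$ and $\hat B_1(4,11)$ from Proposition~\ref{prop:uglycertificates}. Your bookkeeping about the residue classes modulo~$4$ and the matching offset $3m-3 \to 3(m-2)-3$ is sound; it simply makes explicit what the paper leaves implicit.
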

\begin{proof}
    By Propositions~\ref{prop:B2hat} and \ref{prop:C1hat}, we have induction steps
    \begin{align*}
        \text{$\hat B_1(m,n-4)$ non-defective} &\implies \text{$\hat B_1(m,n)$ non-defective}\quad\text{(for $m\geq 4$, $n\geq 3m+1$)},\\
        \text{$\hat B_1(m-2,n-6)$ non-defective} &\implies \text{$\hat B_1(m,n)$ non-defective}\quad\text{(for $m\geq 6$, $n\geq 3m-3$)},
    \end{align*}
    for ugly $(m,n)$
    Applying these to base cases $\hat B_1(4,9)$ and $\hat B_1(4,11)$, which are non-defective by Proposition~\ref{prop:uglycertificates}, then yields the desired result.
\end{proof}

\begin{prop}
    \label{prop:A3hat}
    For any ugly $(m,n)$ with $m\geq 2$, $n\geq 3m+5$, the configuration $\hat A_3(m,n)$ is equiabundant and non-defective.
\end{prop}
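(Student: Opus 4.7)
The plan is to mirror the inductive strategy of the preceding propositions in this subsection, combining two induction steps with the base cases from Proposition~\ref{prop:uglycertificates}. The first step, decreasing $m$, comes for free from Proposition~\ref{prop:B3hat}, since $\hat B_3$ is the $(m-2, n-6)$-inductant of $\hat A_3$; it yields
\[
    \text{$\hat A_3(m-2, n-6)$ non-defective} \implies \text{$\hat A_3(m, n)$ non-defective}
\]
whenever $m\geq 4$ and $n\geq 3m+5$.

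For the second step, decreasing $n$, I would form the $(m, n-4)$-inductant $S$ of $\hat A_3$ and show it is trivially equiabundant and non-defective, in the spirit of the analogous top-of-the-diagram arguments earlier in the paper. Since all $p_I$ of $\hat A_3$ are independent of $n$, every double point either disappears from its original cell or is pushed into $L_t\cap L_4$ with $t\in\set{2,3}$, whose $u$- and $v$-codimensions both vanish and which is therefore irrelevant; hence $S$ carries no points. Applying the relations \eqref{eq:inductant-construction} to the tabulated $\tilde v_I$ of $\hat A_3$ shows that $L_4$ carries only $y$-equations and that the only pairs among $L_1,\dots,L_4$ which share any $y$-equations are $\set{L_1,L_2}$, $\set{L_1,L_3}$, and $\set{L_1,L_4}$. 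Since no $x_i$ vanishes on any subvariety, a bidegree $(1,2)$ monomial $x_i y_{j_1} y_{j_2}$ vanishes on $L_t$ only if one of the $y$-factors does; a brief case check then shows that no two of the admissible $y$-cells can cover $\set{1,2,3,4}$, so $\mathcal I_{S(m,n)}(1,2)=0$. Equiabundancy of $S$ follows from Lemma~\ref{lem:defect-additivity} applied to the equiabundant $\hat A_3$, and Lemma~\ref{lem:inductant-induction} then delivers
\[
    \text{$\hat A_3(m, n-4)$ non-defective} \implies \text{$\hat A_3(m, n)$ non-defective}.
\]

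With these two steps in hand, the base cases $\hat A_3(2,11)$ and $\hat A_3(2,13)$ from Proposition~\ref{prop:uglycertificates} propagate to all ugly $(m,n)$ with $m\geq 2$ and $n\geq 3m+5$: the $(m-2,n-6)$-step chains them along $(2,11)\to(4,17)\to(6,23)\to\cdots$ and $(2,13)\to(4,19)\to(6,25)\to\cdots$ to hit $(m,3m+5)$ and $(m,3m+7)$ for every even $m\geq 2$, and the $(m,n-4)$-step then extends to all larger odd $n$ at each fixed $m$. Both induction steps preserve $n-m \bmod 4$, so the process stays entirely within the ugly regime. I do not anticipate a real obstacle: the whole argument is parallel to the proofs of Propositions~\ref{prop:G0} and \ref{prop:D1hat}, and the only substantive verification is the pattern of shared $y$-equations among the four subvarieties of $S$, which is immediate from the $\tilde v_I$-tables.
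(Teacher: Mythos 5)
Your proposal is correct and follows the same strategy as the paper: the $(m-2,n-6)$-step via Proposition~\ref{prop:B3hat}, the $(m,n-4)$-step via showing the corresponding inductant $S$ is trivially equiabundant and non-defective, and the base cases $\hat A_3(2,11)$, $\hat A_3(2,13)$. In fact your analysis of the shared $y$-equations among the four subvarieties of $S$ (exactly $\set{L_1,L_2}$, $\set{L_1,L_3}$, $\set{L_1,L_4}$ share, so $L_2,L_3,L_4$ are mutually disjoint in equations and no $x_iy_{j_1}y_{j_2}$ can cover all four) is more precise than the paper's own phrasing, which appears to contain a slip (it speaks of ``$4$ of the $5$'' subvarieties, but $S$ has only four); your statement is the correct one.
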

\begin{proof}
    Let $S$ be the $(m,n-4)$-inductant of $\hat A_3$, all its points become irrelevant in the inductant construction. Further, it admits no bidegree $(1,2)$ monomials to vanish on its subvarieties, since $4$ of the $5$ share no equations. Thus $S$ is always trivially equiabundant and non-defective, establishing the induction
    \[
    \text{$\hat A_3(m,n-4)$ non-defective} \implies \text{$\hat A_3(m,n)$ non-defective}.
    \]

    On the other hand, by Proposition~\ref{prop:B3hat}, we have the induction step
    \[
    \text{$\hat A_3(m-2,n-6)$ non-defective} \implies \text{$\hat A_3(m,n)$ non-defective}\quad\text{(for $m\geq 4$, $n\geq 3m+5$)}.
    \]
    for ugly $(m,n)$.
    Combining these two inductions with the bases cases $\hat A_3(2,11)$ and $\hat A_3(2,13)$, which are non-defective by Proposition~\ref{prop:uglycertificates}, then yields the desired conclusion.
\end{proof}

\begin{prop}
    \label{prop:A2hat}
    For any ugly $(m,n)$ with $m\geq 2$, $n\geq 3m+1$, the configuration $\hat A_2(m,n)$ is equiabundant and non-defective.
\end{prop}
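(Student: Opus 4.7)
The plan mirrors exactly the structure used in Propositions~\ref{prop:B1hat}, \ref{prop:B2hat} and \ref{prop:C1hat}: extract two induction steps from the two inductant relationships that involve $\hat A_2$, and then ignite these inductions from the base cases provided by Proposition~\ref{prop:uglycertificates}. Concretely, $\hat A_3$ is an $(m,n-4)$-inductant of $\hat A_2$ and $\hat B_2$ is an $(m-2,n-6)$-inductant of $\hat A_2$, so Lemma~\ref{lem:inductant-induction} combined with Propositions~\ref{prop:A3hat} and \ref{prop:B2hat} should immediately yield the implications
\[
\text{$\hat A_2(m,n-4)$ non-defective} \implies \text{$\hat A_2(m,n)$ non-defective}\qquad\text{for $m\geq2$, $n\geq 3m+5$,}
\]
\[
\text{$\hat A_2(m-2,n-6)$ non-defective} \implies \text{$\hat A_2(m,n)$ non-defective}\qquad\text{for $m\geq4$, $n\geq 3m+1$,}
\]
restricted to ugly $(m,n)$. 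Equiabundancy will follow automatically, since $\hat A_2$ was already noted to be equiabundant in its definition.

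The next step is to check that these two induction steps, together with the base cases $\hat A_2(2,7)$ and $\hat A_2(2,9)$ from Proposition~\ref{prop:uglycertificates}, cover the full region $\{(m,n) : m\geq 2 \text{ even}, n\geq 3m+1, n\text{ odd}\}$. Both inductions preserve the residue $n-m\pmod 4$, so I must make sure that the two base cases cover both of the odd residues; indeed $\hat A_2(2,7)$ has $n-m=5\equiv 1$ and $\hat A_2(2,9)$ has $n-m=7\equiv 3$. At $m=2$ the first induction step alone extends these to all ugly $(2,n)$ with $n\geq 7 = 3m+1$. For larger even $m$, the second induction step reduces to the case $(m-2,n-6)$, and the bound $n\geq 3m+1$ transports correctly to $n-6\geq 3(m-2)+1$, so the induction propagates cleanly.

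There is no serious obstacle here: both inductant constructions have already been verified in the definitions of $\hat A_3$ and $\hat B_2$ (with the commuting-inductant remark), and the required non-defectivity of the larger inductants is now in hand from the previous propositions. The only thing to keep straight is the bookkeeping of bounds and the parity-mod-$4$ tracking of the base cases, which is routine and entirely parallel to the proofs of Propositions~\ref{prop:B1hat} and \ref{prop:C1hat}.
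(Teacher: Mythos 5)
Your proof is correct and follows essentially the same route as the paper: both induction steps come from Lemma~\ref{lem:inductant-induction} applied to the inductants $\hat A_3$ and $\hat B_2$ via Propositions~\ref{prop:A3hat} and~\ref{prop:B2hat}, and the induction is launched from the base cases $\hat A_2(2,7)$, $\hat A_2(2,9)$. Your extra bookkeeping (verifying that the two base cases cover both odd residues of $n-m$ modulo $4$ and that the bound $n\geq 3m+1$ transports to $n-6\geq 3(m-2)+1$) is correct and makes explicit what the paper leaves implicit.
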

\begin{proof}
    By Propositions~\ref{prop:A3hat} and \ref{prop:B2hat}, we have induction steps
    \begin{align*}
        \text{$\hat A_2(m,n-4)$ non-defective} &\implies \text{$\hat A_2(m,n)$ non-defective}\quad\text{(for $m\geq 2$, $n\geq 3m+5$)},\\
        \text{$\hat A_2(m-2,n-6)$ non-defective} &\implies \text{$\hat A_2(m,n)$ non-defective}\quad\text{(for $m\geq 4$, $n\geq 3m+1$)}.
    \end{align*}
    for ugly $(m,n)$.
    Applying these to base cases $\hat A_2(2,7)$ and $\hat A_2(2,9)$, which are non-defective by Proposition~\ref{prop:uglycertificates}, then yields the desired result.
\end{proof}

\begin{prop}
    \label{prop:A1hat}
    For any ugly $(m,n)$ with $m\geq 4$, $n\geq 3m-3$, the configuration $\hat A_1(m,n)$ is superabundant and non-defective.
\end{prop}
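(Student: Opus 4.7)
The plan is to follow exactly the same pattern as the preceding chain of propositions (\ref{prop:D1hat}--\ref{prop:A2hat}): combine two inductant-based induction steps with the two base cases supplied by Proposition~\ref{prop:uglycertificates}. Superabundancy is immediate from the computation $\vdim\hat A_1(m,n)=(1-\epsilon_{m,n})/2\in\{0,-1\}$ made when $\hat A_1$ was defined, so only non-defectivity requires argument.

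First, I would invoke Proposition~\ref{prop:A2hat}: since $\hat A_2$ is the $(m,n-4)$-inductant of $\hat A_1$ and is non-defective and equiabundant whenever $m\geq2$, $n\geq3m+1$, Lemma~\ref{lem:inductant-induction} yields the horizontal induction step
\[
\text{$\hat A_1(m,n-4)$ non-defective} \implies \text{$\hat A_1(m,n)$ non-defective}\quad\text{for $m\geq2$, $n\geq 3m+1$.}
\]
Next, I would invoke Proposition~\ref{prop:B1hat}: since $\hat B_1$ is the $(m-2,n-6)$-inductant of $\hat A_1$ and is non-defective and equiabundant whenever $m\geq4$, $n\geq 3m-3$, Lemma~\ref{lem:inductant-induction} analogously gives the vertical induction step
\[
\text{$\hat A_1(m-2,n-6)$ non-defective} \implies \text{$\hat A_1(m,n)$ non-defective}\quad\text{for $m\geq4$, $n\geq 3m-3$.}
\]

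It then remains to propagate the base cases $\hat A_1(4,9)$ and $\hat A_1(4,11)$, non-defective by Proposition~\ref{prop:uglycertificates}. These cover the two relevant residues of $n-m$ modulo $4$ at $m=4$, so the horizontal step extends them to all ugly $(4,n)$ with $n\geq 9$ (applicable once $n\geq 13$, and the values $n=9,11$ are the base cases themselves). For each subsequent even $m\geq 6$, the vertical step lifts the two smallest $n$'s at $m-2$ to the two smallest $n$'s at $m$ (namely $n=3m-3$ and $n=3m-1$, from $(m-2,3m-9)$ and $(m-2,3m-7)$, both of which lie in the range previously established), and the horizontal step then fills in all larger odd $n$. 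A brief arithmetic check that $3(m-2)-3\leq 3m-9$ and $3m-3\geq 3m+1-4$ confirms the two steps dovetail correctly across all ugly $(m,n)$ with $m\geq4$, $n\geq 3m-3$.

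No substantive obstacle is expected: the work has already been done in constructing the inductants $\hat A_2$ and $\hat B_1$ and in verifying the required base cases computationally. The only step worth double-checking is the bookkeeping that the two induction steps, combined with the two base cases at $m=4$, truly cover the announced range; this is routine.
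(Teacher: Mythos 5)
Your proposal is correct and takes essentially the same approach as the paper: the same two induction steps (horizontal via Proposition~\ref{prop:A2hat} and vertical via Proposition~\ref{prop:B1hat}, both justified by Lemma~\ref{lem:inductant-induction}), the same two base cases $\hat A_1(4,9)$ and $\hat A_1(4,11)$, and the same observation that superabundancy is built into the definition of $\hat A_1$. The paper's version simply leaves the dovetailing bookkeeping implicit, whereas you spell it out.
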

\begin{proof}
    By Propositions~\ref{prop:A2hat} and \ref{prop:B1hat}, we have induction steps
    \begin{align*}
        \text{$\hat A_1(m,n-4)$ non-defective} &\implies \text{$\hat A_1(m,n)$ non-defective}\quad\text{(for $m\geq 2$, $n\geq 3m+1$)},\\
        \text{$\hat A_1(m-2,n-6)$ non-defective} &\implies \text{$\hat A_1(m,n)$ non-defective}\quad\text{(for $m\geq 2$, $n\geq 3m-3$)},
    \end{align*}
    for ugly $(m,n)$.
    Applying these to base cases $\hat A_1(4,9)$ and $\hat A_1(4,11)$, which are non-defective by Proposition~\ref{prop:uglycertificates}, then yields the desired result.
\end{proof}

\begin{theorem}
    \label{thrm:A0ugly}
    For any ugly $(m,n)$ with $m\geq 4$, $n\geq m^3-m-3$, the configuration $A_0(m,n)$ is superabundant and non-defective.
\end{theorem}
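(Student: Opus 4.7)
The plan is to derive non-defectivity of $A_0(m,n)$ from that of the inductant family $\hat A_1$ together with the nice case of $A_0$ (Theorem~\ref{thrm:A0nice}), via Lemma~\ref{lem:inductant-induction}. Since $\hat A_1$ is by construction the $(m,(n-m-\epsilon_{m,n})/2)$-inductant of $A_0$, writing $n':=(n-m-\epsilon_{m,n})/2$ it suffices to know that both $\hat A_1(m,n)$ and $A_0(m,n')$ are non-defective and superabundant; Lemma~\ref{lem:defect-additivity} combined with the sign of $\vdim A_0(m,n)$ in the ugly case (formula~\eqref{eq:A0defect}) takes care of superabundance of $A_0(m,n)$ automatically.

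First I would argue that $(m,n')$ is nice. As $m$ is even and $n$ is odd, $n-m$ is odd, so $\epsilon_{m,n}\in\set{1,3}$; by the very definition of $\epsilon_{m,n}$ as the residue of $n-m$ modulo $4$, the numerator $n-m-\epsilon_{m,n}$ is divisible by $4$, whence $n'$ is an even integer. Together with $m$ even, this makes $(m,n')$ nice.

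Next I would verify the cubic bound required of $n'$ by Theorem~\ref{thrm:A0nice}. As $m$ is even, $m^3-2m-5$ is odd, so $\ceil{(m^3-2m-5)/2}=(m^3-2m-4)/2$, and the required inequality $n'\geq (m^3-2m-4)/2$ rewrites as $n\geq m^3-m-4+\epsilon_{m,n}$. A short case analysis modulo $4$ shows that among the odd $n\geq m^3-m-3$, those with $\epsilon_{m,n}=1$ already satisfy $n\geq m^3-m-3$, while those with $\epsilon_{m,n}=3$ automatically satisfy $n\geq m^3-m-1$ since $m^3-m-3\equiv 2m+1\pmod 4$ (for even $m$) forces the next admissible $n$ with $\epsilon_{m,n}=3$ to be $m^3-m-1$; either way, $n\geq m^3-m-4+\epsilon_{m,n}$ holds.

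Finally, Proposition~\ref{prop:A1hat} delivers non-defectivity and superabundance of $\hat A_1(m,n)$ (its hypothesis $n\geq 3m-3$ being trivially implied by $n\geq m^3-m-3$ for $m\geq 2$), while Theorem~\ref{thrm:A0nice} delivers the same for $A_0(m,n')$. Lemma~\ref{lem:inductant-induction} then closes the argument. The only mildly subtle point is the modulo-$4$ bookkeeping in the third step, which confirms that the single cubic bound $n\geq m^3-m-3$ suffices for both residue classes of $n-m\pmod 4$; no new computational base case is needed beyond those already used in the nice case.
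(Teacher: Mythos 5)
Your proposal follows essentially the same route as the paper: reduce the ugly case to a nice case through the $\zav{m,\frac{n-m-\epsilon_{m,n}}{2}}$-inductant $\hat A_1$, then combine Proposition~\ref{prop:A1hat}, Theorem~\ref{thrm:A0nice} and Lemma~\ref{lem:inductant-induction}; your check that $(m,n')$ is nice and that $n'$ meets the cubic bound is exactly the paper's ``easy splitting of cases based on $n-m\pmod4$.'' One small numerical slip in your third step: for even $m$ the relevant residue is $m^3-m-3\equiv m+1\pmod4$ (not $2m+1$; the two differ when $m\equiv2\pmod4$), but your actual conclusion is still correct, since what matters is $n-m$ at $n=m^3-m-3$, and $m^3-2m-3\equiv1\pmod4$ for every even $m$ (as $m^3-2m=m(m^2-2)$ is divisible by $4$), so $\epsilon_{m,m^3-m-3}=1$ and the smallest admissible $n$ with $\epsilon_{m,n}=3$ is indeed $m^3-m-1$. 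With that corrected, the argument is sound and coincides with the paper's.
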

\begin{proof}
    We will utilize Theorem~\ref{thrm:A0nice} together with the induction step
    \begin{multline}
        \label{eq:A0bigjump}
        \text{$A_0\zav{m,\frac{n-m-\epsilon_{m,n}}{2}}$ superabundant and non-defective} \implies\\\implies \text{$A_0(m,n)$ superabundant and non-defective}\quad\text{(for ugly $(m,n)$, $m\geq 4$, $n\geq 3m-3$)}
    \end{multline}
    that Proposition~\ref{prop:A1hat} provides. Note that $m^3-m-3>3m-3$, so this bound in the induction step is not an issue. Further, the bound  $\ceil{\frac{m^3-2m-5}2}$ from Theorem~\ref{thrm:A0nice} becomes $\frac{m^3-2m-4}2$ for even $m$, and under $n\geq m^3-m-3$, an easy splitting of cases based on $n-m\pmod4$ reveals that
    \[\frac{n-m-\epsilon_{m,n}}2 \geq \frac{m^3-2m-4}2.\]
    Hence for any $(m,n)$ satisfying the bounds of the present Theorem, we have that $A_0\zav{m,\frac{n-m-\epsilon_{m,n}}2}$ is superabundant and non-defective. Therefore, \eqref{eq:A0bigjump} shows that $A_0(m,n)$ is superabundant and non-defective.
\end{proof}

\section{Computations}
\label{sec:computations}

To verify non-defectivity of coordinate configurations, we adapt in Algorithm~\ref{alg:coordconfig} a standard Monte Carlo approach to checking defectivity of secant varieties, which we will now overview (cf. \cite{abo, abo-brambilla, abo-ottaviani-peterson, abo-vannieuwenhoven, torrance-vannieuwenhoven2021, torrance-vannieuwenhoven2022}).

Let $Z$ be a coordinate configuration and $\mathcal I_L$ be the ideal of polynomials that vanish on all the subvarieties of $Z$. We wish to find the dimension of $\mathcal I_Z(1,2)\subseteq\mathcal I_L(1,2)$, or equivalently its codimension. For this, we build a matrix whose entries are $\frac{\partial g}{\partial x_i}(P)$ or $\frac{\partial g}{\partial y_j}(P)$ as $g$ ranges over monomial generators of $\mathcal I_L(1,2)$ and $P$ ranges over the double points of $Z$. In practice, we know that many $\frac{\partial g}{\partial x_i}(P)$ or $\frac{\partial g}{\partial y_j}(P)$ will be zero for all $g$, so we only use those pairs $(P,x_i)$ or $(P,y_j)$ that do not give zeros identically. The rank of such a matrix is then the codimension of $\mathcal I_Z(1,2)$ in $\mathcal I_L(1,2)$.

The points $P$ of the configuration are chosen randomly, further, we constrain ourselves to integer-coordinate points and subsequently work over a suitable finite field $\ef_p$ -- we used $p=127$, the choice of a Mersenne prime allowing us to work directly with base-two representations when reducing integers modulo $p$. Working over a finite field suffices, because the rank of a matrix over a finite field is at most as large as its rank in characteristic $0$. Further, since we are looking for the dimension for a general choice of points, a random choice will usually be good enough. Because of the possibility of an unlucky choice of points, the result of a run of this algorithm can either be a definitive statement of non-defectivity or merely a probabilistic statement of defectivity.

In other words, an answer of \uv{defective} may be a false negative.
With a poor choice of parameters (for example, too small of a $p$), these false negatives could occur often. We had originally used $p=2^{13}-1 = 8191$ for the finite field, but later lowered this to $127$ in order to use $16$-bit integers instead of $32$-bit integers for the sake of saving memory.\footnote{To avoid integer overflows introducing arithmetic errors, one needs at least $\text{INTMAX}\geq (p-1)^2$, where $\text{INTMAX}$ is the maximal value that can be stored by the integer type used. Since we used signed $16$-bit integers, this was $\text{INTMAX} =2^{15}-1$ for us, making the use of $p=2^7-1$ viable.} In our experience, the false negatives were still infrequent enough, so we continued using $127$.

\begin{algorithm}
    \smallskip
    \raggedright
    \textsc{Input}: a coordinate configuration $Z$ with $k$ subvarieties in $\pe^{m,n}$ specified by $m$, $n$ and the parameters $\tilde u_I,\tilde v_I,p_I\in\zet_{\geq0}$ for $I\subseteq\set{1,\dots,k}$; a prime $p$\\
    \textsc{Output}: \uv{non-defective} or \uv{probably defective}
    \smallskip
    \begin{enumerate}[label={(\arabic*)}]
        \item Assign each subvariety $L_t$ a concrete set of equations $x_i=0$ and $y_j=0$ based on $\tilde u_I$, $\tilde v_I$.
        \item Compute the set $G$ of bidegree $(1,2)$ monomials in $\mathcal I_{\bigcup_{t\in\set{1,\dots,k}}L_t} = \bigcap_{t\in\set{1,\dots,k}}\mathcal I_{L_t}$.
        \item For each $I\subseteq\set{1,\dots,k}$, compute the list $W_I$ of variables that vanish on all $L_t$, $t\in I$.
        \item Initialize a $\abs G\times\zav{\sum_{I\subseteq\set{1,\dots,k}} p_I\cdot \abs{W_I}}$ matrix $M$ over $\ef_p$ filled with zeros; index the rows of $M$ with monomials from $G$.
        \item Initialize $\ell\gets 1$ and for each $I\subseteq\set{1,\dots,k}$, repeat the following $p_I$ times:
        \begin{itemize}[leftmargin=3em]
            \item[(5.1)] Choose a point $P=([x^{P}_0:\cdots:x^P_m], [y^P_0:\cdots:y^P_n])$ such that $x^P_i=0$ resp. $y^P_j=0$ whenever $x_i=0$ resp. $y_j=0$ is an equation of some $L_t$, $t\in I$ and the remaining coordinates are chosen randomly from $\set{0,1,\dots,p-1}$.
            \item[(5.2)] For each $z\in W_I$: write $M_{g,\ell} \gets \frac{\partial g}{\partial z}(P)$ for each $g\in G$, then increment $\ell\gets\ell+1$.
        \end{itemize}
        \item Compute $d:=\abs G - \rank M$ and $d_{\text{exp}}:=\max\set{0, \abs G - \sum_{I\subseteq \set{1,\dots,k}} p_I\cdot\min\set{m+n+1,\abs{W_I}}}$.
        \item If $d=d_{\text{exp}}$, return \uv{non-defective}, otherwise return \uv{probably defective}.
    \end{enumerate}
    \caption{Is a coordinate configuration $Z$ defective or not?}
    \label{alg:coordconfig}
\end{algorithm}

Implementationally, we were strongly inspired by \cite{torrance-vannieuwenhoven2021}. We implemented Algorithm~\ref{alg:coordconfig} in C++, compiling with the GNU Compiler Collection.\footnote{There are some mild formal differences between our implementation and the theory presented in this paper. Most notably, our implementation only allows bivariate polynomials in the role of parameter functions $p_I$, $\tilde u_I$, $\tilde v_I$ specifying families of coordinate configurations. Because of this, whenever a family uses a function that is a $d$-quasipolynomial and not a polynomial, we implement it internally as several separate families, one for each case modulo $d$. This happens with the families used in the ugly case.} The source code of this implementation may be found on GitHub, see \cite{gitrepo}.
For linear algebra over finite fields, we used
the Eigen \cite{eigen}
and FFLAS-FFPACK \cite{fflas-ffpack}
libraries; the latter depends
on Givaro \cite{givaro}
and on a BLAS implementation, for which we used OpenBLAS \cite{openblas}.
The building of the matrix $M$ in step (5) is parallelized with OpenMP directives.

We executed the code primarily on a virtual machine with 32 GB of main memory and 8 vCPUs. Later, we made some small modifications to the sets of certificates used in Propositions~\ref{prop:nicecertificates} and \ref{prop:uglycertificates} and therefore reran some of the smaller cases on a computer with 8 GB of main memory and an Intel Core i7-7500U CPU with a clock speed of 2.7 GHz.

Following \cite{torrance-vannieuwenhoven2021}, we generate certificates of the computations performed in the form of text files containing the basic information specifying the coordinate configuration in question, the particular points that were randomly chosen and rudimentary statistics about the size and length of the computation; this is of course dependent on hardware, among other factors. All of the certificates relevant to Propositions~\ref{prop:nicecertificates} and \ref{prop:uglycertificates} can be found at \cite{gitrepo} alongside the source code.
As an example, the following is a certificate for $B_0(2,4)$:
\medskip
\begin{verbatim}
Using random seed 1738187985 and the finite field with 127 elements
In the ambient space P^2 x P^4, computing the dimension of the O(1,2) part of
the ideal of a collection of subvarieties given by
{x_1 = x_2 = y_3 = y_4 = 0}
and double points supported at
([ 16 110  21], [ 71  96  82   5 125])
([79 92 78], [44 43 51 50 53])
([97 56 48], [57 82 74 19 50])
([38 31 85], [116  59  66  94  12])
([110   0   0], [ 15 106  78   0   0])
([120   0   0], [44 54 66  0  0])
([53  0  0], [70 66 97  0  0])
The 39 x 44 matrix was built in 0.0494281 s.
Computing the rank of the 39 x 44 matrix took 0.00149971 s.
The dimension is 0 vs. expected 0
The configuration is NON-DEFECTIVE (SUPERABUNDANT).
The entire computation took 0.0513569 s.
\end{verbatim}
\medskip
In terms of matrix size, $E_0(9,197)$ was the largest computation, involving a $67\,752 \times 67\,824$ matrix. Somewhat randomly, $E_0(9,196)$ actually took slightly longer in terms of time despite having a slightly smaller matrix, totaling $11\,210.9$ seconds, or approximately $3$~hours and $7$~minutes.

\def\arxiv#1{\href{https://arxiv.org/abs/#1}{arXiv:#1}}

\end{document}